\numberwithin{equation}{section}
\renewcommand{\a}{{\overline{a}}}
\renewcommand{\b}{{\overline{b}}}
\newcommand{\x}{{\overline{x}}}
\newcommand{\y}{{\overline{y}}}
\newcommand{\z}{{\overline{z}}}
\newcommand{\stab}{\operatorname{stab}}
\newcommand{\lgth}{\operatorname{lg}}
\newcommand{\tp}{\operatorname{tp}}
\newcommand{\dcl}{\operatorname{dcl}}
\newcommand{\acl}{\operatorname{acl}}
\newcommand{\Aut}{\operatorname{Aut}}
\newcommand{\cl}{\operatorname{cl}}
\newcommand{\SU}{\operatorname{SU}}
\renewcommand{\d}{\operatorname{d}}
\newcommand{\Gm}{\mathbb{G}_m}
\newcommand{\Loc}{\operatorname{locus}}
\newcommand{\Gal}{\operatorname{Gal}}
\newcommand{\DM}{\operatorname{MD}}
\newcommand{\Div}{\operatorname{Div}}
\newcommand{\RM}{\operatorname{MR}}
\newcommand{\RDM}{\operatorname{MRD}}
\newcommand{\Th}{\operatorname{Th}}
\newcommand{\Cb}{\operatorname{Cb}}
\newcommand{\cd}{\operatorname{cd}}
\renewcommand{\L}{\mathcal L}
\newcommand{\C}{\mathcal C_0}
\newcommand{\Cmu}{\mathcal{C}_0^{\mu}}
\newcommand{\RegP}{\mathfrak{Reg}}
\newcommand{\R}{\mathbb{R}}
\newcommand{\N}{\mathbb{N}}
\newcommand{\Z}{\mathbb{Z}}
\newcommand{\F}{\mathbb{F}}
\newcommand{\Q}{\mathbb{Q}}
\newcommand{\Cn}[1]{(\mathbb{C}^*)^{#1}}
\newcommand{\res}{\!\upharpoonright\!} 
\renewcommand{\phi}{\varphi}
\newcommand{\elex}{\preccurlyeq}
\newcommand{\elres}{\succcurlyeq}
\newcommand{\g}{\mathfrak g}
\newcommand{\UE}{\textrm{\"U}}
\newcommand{\ldim}{\operatorname{l.dim}_\Q}
\newcommand{\tr}{\operatorname{tr.deg}}
\newcommand{\divh}{\operatorname{div}}
\def\Ind#1#2{#1\setbox0=\hbox{$#1x$}\kern\wd0\hbox to 0pt{\hss$#1\mid$\hss}
\lower.9\ht0\hbox to 0pt{\hss$#1\smile$\hss}\kern\wd0}
\def\ind{\mathop{\mathpalette\Ind{}}}
\def\Notind#1#2{#1\setbox0=\hbox{$#1x$}\kern\wd0\hbox to 0pt{\mathchardef
\nn="3236\hss$#1\nn$\kern1.4\wd0\hss}\hbox to 
0pt{\hss$#1\mid$\hss}\lower.9\ht0
\hbox to 0pt{\hss$#1\smile$\hss}\kern\wd0}
\def\nind{\mathop{\mathpalette\Notind{}}}
\theoremstyle{plain}
\newtheorem{Thm}{Theorem}[section]
\newtheorem{Prop}[Thm]{Proposition}
\newtheorem{Cor}[Thm]{Corollary}
\newtheorem{Lem}[Thm]{Lemma}
\newtheorem{Rem}[Thm]{Remark}
\newtheorem{Fact}[Thm]{Fact}
\theoremstyle{definition}
\newtheorem{Defi}[Thm]{Definition}
\newtheorem{Pb}[Thm]{Question}
\newtheorem{Exs}[Thm]{Examples}
\newtheorem{Ex}[Thm]{Example}
\begin{document}
\title{Generic Automorphisms and Green Fields}
\author{Martin Hils}
\date{\today}

\address{Institut de Math\'ematiques de Jussieu\\
\'Equipe de Logique Math\'ematique\\
Universit\'e Paris Diderot Paris 7\\
UFR de Math\'ematiques - case 7012, site Chevaleret\\
75205 Paris Cedex 13\\
France} 
\email{hils@logique.jussieu.fr}

\keywords{Model Theory, Bad Field, Generic Automorphism, Hrushovski Construction}
\subjclass[2000]{Primary: 03C45; Secondary: 03C35, 03C60, 03C65}
\begin{abstract}
We show that the generic automorphism is axiomatisable in the green field of Poizat (once Morleyised) as well as 
in the bad fields which are obtained by collapsing this green field to finite Morley rank. As a corollary, we obtain 
``bad pseudofinite fields" in characteristic 0.

In both cases, we give geometric axioms. In fact, a general framework is presented allowing this kind 
of axiomatisation. We deduce from various constructibility results for algebraic varieties in characteristic 0 
that the green and bad fields fall into this framework. Finally, we give similar results for other theories 
obtained by Hrushovski amalgamation, e.g.\ the free fusion of two strongly minimal theories having the 
definable multiplicity property.

We also close a gap in the construction of the bad field, showing that the codes may be chosen to be families of 
strongly minimal sets.
\end{abstract}

\maketitle

\section{Introduction}
For more than two decades now, new and often unexpected stable structures have been constructed using 
Hrushovski's amalgamation method, starting in 1988 when Hrushovski obtained a strongly minimal 
theory which violated Zilber's trichotomy conjeture (see \cite{Hr93}). This construction is called the \emph{ab initio} 
case. The \emph{fusion} of two strongly minimal structures having DMP (i.e.\ definable Morley degrees) into a single 
one \cite{Hr92} then showed that the realm of strongly minimal theories is vast, even when one only looks at strongly minimal expansions of algebraically closed fields. 

Poizat's \emph{bicoloured fields} are expansions of algebraically closed fields by a new predicate. The black fields (where a new subset is added) answer a question of Berline and Lascar 
about possible ranks of superstable fields \cite{Po99}. The construction of the green fields, algebraically closed fields of characteristic 0 with a proper subgroup of the multiplicative group of the field, requires non-trivial results from algebraic 
geometry, in order to establish the relevant definability properties needed for the amalgamation construction to work 
\cite{Po01}. Poizat's green fields are infinite rank analogues of so-called \emph{bad fields}, fields of finite Morley rank 
with a definable proper infinite subgroup of the multiplicative group. In positive characteristic, bad fields are very unlikely to exist, 
by a result of Wagner \cite{Wa03}. Their absence would have simplified the study of groups of finite Morley rank, in particular 
that of infinite simple groups of finite Morley rank which according to Cherlin-Zilber's Algebraicity Conjecture should 
be algebraic groups. Baudisch, Martin-Pizarro, Wagner and the author showed in \cite{BHMW07} that Poizat's green field may 
be collapsed into a bad field. 

The positivity of the predimension is one of the key features of Hrushovski's amalgamation method. Zilber suggested 
that one interprets this as a \emph{generalised Schanuel condition}, due to the analogy with Schanuel's Conjecture 
(SC) which asserts that for any $\Q$-linearly independent tuple $(y_1,\ldots,y_n)$ of complex numbers one 
has $\tr(y_1,\ldots,y_n,e^{y_1},\ldots,e^{y_n}/\Q)\geq n$. This conjecture is wide open. Ax proved a 
differential version of it \cite{Ax71}. In the case of the green fields, the analogy raised by Zilber is supported by two facts. 
On the one hand, assuming (SC), Zilber constructs a natural model of the theory of the green field of Poizat, with universe 
the complex numbers and which has an ``analytic" flavour. On the other hand, Ax's result, or rather a consequence thereof called \emph{weak CIT}, is 
essentially used in the construction by Poizat. Weak CIT is a finiteness result on intersections of algebraic varieties with cosets 
of tori in characteristic 0 which allows to control atypical components of such intersections, i.e.\ those having a greater dimension than the expected one.

If $T$ is a stable model-complete theory, one may build the theory $T_{\sigma}$ of models of $T$ with a 
distinguished automorphism. It is an interesting question to determine whether $T_{\sigma}$ admits a model-companion. If 
it does, we denote it by $TA$ and say that the generic automorphism is axiomatisable in $T$. 
The geometric model theory of $TA$ (paradigmatically that of $ACFA$ in \cite{CH99}) has proven to be 
a powerful tool when applied to problems in algebraic geometry, number theory and algebraic dynamics (see e.g.\ \cite{Hr01,Sc02,Hr04,CH07,CH07a}). Whether $TA$ exists or not 
is a test question on how well one definably controls `multiplicities' in $T$. Existence of $ACFA$ for example is an easy 
consequence of the fact that being irreducible is definable in families of algebraic varietes; the abstract analogue 
of this for a theory of finite Morley rank is the definable multiplicity property (DMP).

For many structures obtained by Hrushovski amalgamation (when definably expanded to a language in which they become model-complete), it is quite elementary to show that the generic 
automorphism is axiomatisable, using so-called `geometric axioms'. However, in the green fields of Poizat and in the bad 
fields, using just weak CIT one only gains good definable control of dimension and rank. In this vein, there is 
the result of Evans that the green fields do not have the finite cover property \cite{Ev08}. But in order to axiomatise the 
generic automorphism, we also need a definable control of `multiplicities'. There are difficulties related to a necessary choice 
of green roots, and Kummer theory comes into the picture.

\smallskip

The paper is organised as follows. In Section \ref{S:GenAuto} we present a framework for `geometric 
axioms' for $TA$ in the case where $T$ is a stable, complete and model-complete theory: we show in Proposition 
\ref{P:PartialGenTAExists} that such an axiomatisation may be given if $T$ admits a \emph{geometric notion of 
genericity} (see Definition \ref{D:GeometricGenericity}). We then review the construction of the green fields of Poizat and of the bad fields, including 
the relevant uniformity results from algebraic geometry used in the course of the construction (Section \ref{S:delta}). 

Section \ref{S:def} is devoted to the proof of a definability result in characteristic 0. We show 
that being Kummer generic is definable for algebraic varieties $V$ among an algebraic family 
(Proposition \ref{P:SimpleDef}), where Kummer genericity of $V$ is a property defined in terms of Kummer extensions 
of the field of rational functions $K(V)$. Definability of Kummer genericity is then used to overcome the difficulties related to the 
choice of green roots which were mentioned above. This enables us to close a gap in the construction of the bad field which had been observed 
by Roche (see Corollary \ref{C:SmCodes}).

In Section \ref{S:Gen_Green}, we use the definability of Kummer genericity  to 
prove the main results of the paper, namely that the generic automorphism 
is axiomatisable in the green field of Poizat (Theorem \ref{T:GenGreen}) and in the bad fields (Theorem 
\ref{T:Gen_Bad_Field}). From the latter result, passing to the fixed structure, we deduce the existence of `bad pseudofinite fields' (Corollary \ref{C:Psf_Bad}). 

Finally, in the last section, we mention existence results of $TA$ for other theories obtained by 
Hrushovski amalgamation. The common feature is that a geometric notion of genericity may be exhibited in these 
theories in a straight-forward way, using the respective `base theories'. A full proof is presented in the case of the 
free fusion of two strongly minimal theories having DMP. The section also includes a brief review of Hrushovski's 
amalgamation method .

I would like to thank Zo\'e Chatzidakis, Frank Wagner and Boris Zilber for helpful discussions on the subject, and the anonymous referee for some useful comments.

\section{Generic Automorphisms of Stable Theories}\label{S:GenAuto}

Let $T$ be a complete $\L$-theory, and let $\L_{\sigma}=\L\cup\{\sigma\}$, 
where $\sigma$ is a new unary funcion symbol. We consider the $\L_{\sigma}$-theory $T_{\sigma}$ obtained 
by adding to $T$ axioms expressing that $\sigma$ is an $\L$-automorphism of the corresponding 
model of $T$. If $T$ is model-complete, it follows that $T_{\sigma}$ is an 
inductive theory, so it has a model-companion (which we denote by $TA$ if it exists) if and only if 
the class of its existentially closed models is elementary. In this case, we say that \emph{the 
generic automorphism is axiomatisable in $T$}, or that \emph{$TA$ exists}.

If $T$ is an arbitrary complete theory, we say that the generic automorphism is axiomatisable in $T$ 
if this holds for some expansion by definitions $T^*$ of $T$ which is model-complete. This does not 
depend on the choice of $T^*$, and so we may as well assume that $T^*$ eliminates quantifiers, by 
taking the Morleyisation of $T$. Hence we really deal with some kind of relative existence of a model-companion. 

If $TA$ exists for some stable theory $T$, then all its completions are simple (see Fact \ref{T:CP} below), and in 
general unstable. The reader may consult \cite{Wa00} 
for a survey on simple theories; although, we will make no real use of them in the present paper.

Some notation: in any model $(M,\sigma)$ of $T_{\sigma}$ we denote 
by $\acl(A)$ the algebraic closure of $A$ in the sense of $M^{eq}\models T^{eq}$, and by $\acl_{\sigma}(A)$ 
the set $\acl\left(\bigcup_{z\in\Z}\sigma^z(A)\right)$, a subset of $M^{eq}$ which is easily 
seen to be closed under (the induced actions on $M^{eq}$ of) $\sigma$ and $\sigma^{-1}$, and algebraically closed in the sense of $T^{eq}$. 

\subsection{Some known results} 
If $T$ has the strict order property, then $TA$ does not exist \cite{KS02}. Kikyo and Pillay conjectured that the 
existence of $TA$ implies that $T$ is stable \cite{KP00}. 
In the following, we will concentrate on stable 
theories. For the rest of this section, \emph{we will assume that $T$ is complete, model-complete and stable}. Fact \ref{T:CP} lists some basic results shown by Chatzidakis and Pillay \cite{CP98}.

\begin{Fact}\label{T:CP}
Let $T$ be a stable complete theory with quantifier elimination such that $TA$ exists. 

\begin{enumerate}
\item The algebraic closure in models of $TA$ is given by $\acl_{\sigma}$. 
\item For $A_i\subseteq M_i\models TA$, $i=1,2$ one has $A_1\equiv_{\L_{\sigma}}A_2$ if and only if 
$\acl_{\sigma}(A_1)\cong_{\mathcal{L}_{\sigma}}\acl_{\sigma}(A_2)$ (over the map sending 
$A_1$ to $A_2$). In particular, the completions of $TA$ are given by the $\mathcal{L}_{\sigma}$-isomorphism types of $\acl(\emptyset)=\acl_{\sigma}(\emptyset)$.   
\item Any completion $\tilde{T}$ of $TA$ is simple (supersimple if $T$ is superstable), and the following characterisation of non-forking holds:

$$A\ind_B^{\tilde{T}} C\,\,\,\,\Leftrightarrow
\,\,\,\,\acl_{\sigma}(AB)\ind^T_{\acl_{\sigma}(B)}\acl_{\sigma}(BC).$$
\item Assume in addition that $T$ eliminates imaginaries and that any algebraically closed set is a model of $T$. Then any completion of $TA$ eliminates 
imaginaries, and the definable set 
$F=Fix(\sigma)=\{m\in M\,  \mid\,   \sigma(m)=m\}$ is stably embedded in $M$.
\end{enumerate}
\end{Fact}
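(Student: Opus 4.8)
The plan is to isolate a single amalgamation property for difference structures, call it $(\ast)$, and deduce all four clauses from it. Statement of $(\ast)$: if $(M_1,\sigma_1)$ and $(M_2,\sigma_2)$ are models of $T_\sigma$ with a common $\L_\sigma$-substructure $(E,\sigma_E)$ such that $E=\acl(E)$ (computed in $T^{eq}$), then, after replacing $(M_2,\sigma_2)$ by an $\L_\sigma$-isomorphic copy over $(E,\sigma_E)$, one may assume $M_1\ind^T_E M_2$, and in that case $\sigma_1\cup\sigma_2$ is an $\L$-elementary map; extending it to an automorphism of a sufficiently saturated $\L$-extension of a free amalgam of $M_1$ and $M_2$, one gets $(N,\sigma)\models T_\sigma$ into which both $(M_i,\sigma_i)$ embed over $(E,\sigma_E)$, hence (since $T_\sigma$ is inductive and $TA$ is its model companion) both embed over $(E,\sigma_E)$ into a model of $TA$. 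I would prove $(\ast)$ using that, $T$ being stable, the type over the algebraically closed set $E$ of any tuple is \emph{stationary}; this makes the non-forking $\L$-amalgam of $M_1$ and $M_2$ over $E$ unique, which is exactly what forces $\tp^T(\bar m_1\bar m_2/E)=\tp^T(\sigma_1\bar m_1,\sigma_2\bar m_2/E)$ for $\bar m_i\in M_i$ (note $\sigma_E\in\Aut(E)$ and $\sigma_iM_i=M_i$), i.e.\ that $\sigma_1\cup\sigma_2$ is $\L$-elementary. Throughout I would also use the trivial remark that any $\L_\sigma$-embedding between models of $TA$ is $\L_\sigma$-elementary, by model-completeness of $TA$.

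Granting $(\ast)$, parts (1) and (2) are quick. The inclusion $\acl_\sigma(A)\subseteq\acl_{\L_\sigma}(A)$ and the forward direction of (2) need no genericity: an element of $\acl(\bigcup_z\sigma^z(A))$ witnesses an $\L_\sigma$-algebraicity formula over $A$, and an $\L_\sigma$-isomorphism extends canonically to the $\acl_\sigma$'s since $\L_\sigma$-maps preserve $\acl^T$ and commute with $\sigma$. For the reverse inclusion in (1) one reduces to $A=\acl_\sigma(A)$; if $b\notin A$, I would amalgamate (iterating $(\ast)$) countably many copies of a model of $TA$ containing $A\cup\{b\}$, mutually $\ind^T$-independent over $A$. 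In the resulting model of $TA$ the copies are $\L_\sigma$-elementary substructures meeting pairwise exactly in $A$, so the copies of $b$ are pairwise distinct $\L_\sigma$-conjugates of $b$ over $A$; hence $b\notin\acl_{\L_\sigma}(A)$. The converse direction of (2) is the same construction, with two copies glued along the given $\L_\sigma$-isomorphism $\acl_\sigma(A_1)\cong\acl_\sigma(A_2)$ in place of the identity on $A$.

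For (3) I would verify that $A\ind_B C:\Longleftrightarrow\acl_\sigma(AB)\ind^T_{\acl_\sigma(B)}\acl_\sigma(BC)$ satisfies the Kim--Pillay axioms and then quote their theorem, which also gives simplicity (and supersimplicity when the local-character bound can be taken finite). Invariance, monotonicity, symmetry, transitivity and finite character descend from the corresponding properties of $\ind^T$ together with elementary facts about $\acl_\sigma$ — in particular that $\acl_\sigma$ of a set is the union of the $\acl_\sigma$'s of its finite subsets, which also yields local character from that of $\ind^T$ (countable local character, hence supersimplicity, when $T$ is superstable); existence/extension follows from extension for $\ind^T$ once the prescribed $\L_\sigma$-type is realised in a model of $TA$ via $(\ast)$. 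The real content is the independence theorem over a set $B=\acl_\sigma(B)$: given $A_1\equiv_B A_2$, $A_i\ind_B C_i$ and $C_1\ind_B C_2$, use (2) to fix an $\L_\sigma$-isomorphism $\acl_\sigma(BA_1)\cong_B\acl_\sigma(BA_2)$, glue $\acl_\sigma(BA_1C_1)$ and $\acl_\sigma(BA_2C_2)$ along it, pass to their non-forking $\L$-amalgam over $\acl_\sigma(B)$ (well defined by stationarity), extend the induced partial automorphism by $(\ast)$, and realise everything in a model of $TA$; the image of $A_1$ is the required common realisation. For (4), with $T$ eliminating imaginaries and algebraically closed sets being models: EI for $TA$ follows by coding each $\L_\sigma$-imaginary by a finite real tuple, using EI of $T$, the finite generation of $\acl_\sigma$ over finite sets, and the type-characterisation (2); and $F=Fix(\sigma)$ is stably embedded because, again by (2) and a back-and-forth use of $(\ast)$, every elementary map between subsets of $F$ in its induced structure extends to an $\L_\sigma$-automorphism of a saturated model of $TA$.

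The main obstacle — the only place where real work happens — is $(\ast)$ and its reuse in the independence theorem: the whole edifice rests on gluing two automorphisms along an algebraically closed base, which succeeds precisely because stationarity in the stable theory $T$ makes the non-forking amalgam unique. Once that mechanism is isolated, each clause is a careful but essentially routine combination of $(\ast)$ with the existential closedness of models of $TA$.
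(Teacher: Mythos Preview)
The paper does not prove this statement at all: it is recorded as a \emph{Fact} and attributed to Chatzidakis--Pillay \cite{CP98}. So there is no ``paper's own proof'' to compare against; what you have written is essentially a sketch of the original Chatzidakis--Pillay argument, and your organising principle --- isolate the amalgamation lemma $(\ast)$ over $T^{eq}$-algebraically closed difference substructures, then feed it into (1), (2), Kim--Pillay for (3), and an automorphism/back-and-forth argument for (4) --- is exactly theirs.

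A couple of places where your sketch is loose or inaccurate. In the independence theorem, the sentence ``glue $\acl_\sigma(BA_1C_1)$ and $\acl_\sigma(BA_2C_2)$ along [the isomorphism $\acl_\sigma(BA_1)\cong\acl_\sigma(BA_2)$], pass to their non-forking $\L$-amalgam over $\acl_\sigma(B)$'' conflates two incompatible operations: if you amalgamate freely over $\acl_\sigma(B)$ you have \emph{not} identified $\acl_\sigma(BA_1)$ with $\acl_\sigma(BA_2)$. The actual argument first moves $A_1$ so that $\acl_\sigma(BA_1)\ind^T_E\acl_\sigma(BC_1C_2)$, then uses stationarity over $E=\acl_\sigma(B)$ to see that the given $\L_\sigma$-isomorphism $f:\acl_\sigma(BA_1)\to\acl_\sigma(BA_2)$ extends to an $\L$-elementary map over $\acl_\sigma(BC_2)$, after which one checks that the transported $\sigma$'s cohere and applies $(\ast)$. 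Your picture is right in spirit but the bookkeeping as written does not parse.

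In (4), the phrase ``the finite generation of $\acl_\sigma$ over finite sets'' is simply false in the intended examples (already in $ACFA$, $\acl_\sigma$ of a finite set is the algebraic closure of an infinitely generated field), and it is not what is used. Elimination of imaginaries for $TA$ under these hypotheses goes via weak elimination (canonical bases of types are interalgebraic with real tuples, using the independence characterisation and $(\ast)$), together with the hypothesis that $\acl_\sigma$-closed sets are models of $T$ and that $T$ itself has EI, to upgrade to full EI. Your sketch of stable embeddedness of $Fix(\sigma)$ via the automorphism criterion is the right idea.
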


The existence of $TA$ may be considered as a (very nice) property of the initial theory $T$. 
Kudaiberganov observed that for a stable theory $T$ it implies 
$T$ does not have the finite cover property (i.e.\ is nfcp). Baldwin and Shelah \cite{BS01} gave 
an abstract characterisation of those stable theories $T$ for which $TA$ exists. (It consists of a strengthening of nfcp, is purely in terms of 
$T$ and uses $\Delta$-types.) In this direction, one may also mention 
the following result due to Hasson and Hrushovski. 

\begin{Fact}[\cite{HH07}]Let $T$ be a strongly minimal theory. Then $TA$ exists if and only if $T$ has the DMP.
\end{Fact}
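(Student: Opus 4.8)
The plan is to obtain both implications from a single scheme of \emph{geometric axioms} for $TA$, in the spirit of Proposition~\ref{P:PartialGenTAExists}. The point to isolate is that, for a strongly minimal theory, Morley rank is automatically definable in families (an easy induction on $|\bar x|$, using that $\exists^{\infty}$ is eliminated), so that DMP is \emph{equivalent} to definability of the Morley degree in families; and this is precisely the extra ingredient that upgrades the family of Morley-degree-one $\L$-definable sets, equipped with their Morley rank, to a \emph{geometric notion of genericity} in the sense of Definition~\ref{D:GeometricGenericity}. (For $|\bar x|=1$ the degree is already definable, so the content is about tuples.)

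$(\Leftarrow)$ Assume DMP; after Morleyisation $T$ has quantifier elimination. For $\phi(\bar x,\bar z)$, $\psi(\bar x,\bar x',\bar w)$ and $r\in\N$ consider the $\L_\sigma$-statement: \emph{for all $\bar c,\bar d$, if $V:=\phi(M,\bar c)$ has Morley rank $r$ and Morley degree $1$, $W:=\psi(M,M,\bar d)\subseteq V\times\sigma(V)$ (with $\sigma(V):=\phi(M,\sigma(\bar c))$) has Morley rank $r$ and Morley degree $1$, and the projections $W\to V$, $W\to\sigma(V)$ have image of Morley rank $r$, then there is $\bar a\in V$ with $(\bar a,\sigma(\bar a))\in W$.} By DMP and definability of Morley rank the hypothesis is an $\L$-condition on $(\bar c,\bar d,\sigma(\bar c))$, hence an $\L_\sigma$-condition on $(\bar c,\bar d)$, so as $\phi,\psi,r$ vary we obtain a genuine set of $\L_\sigma$-axioms, and it remains to check that its models are exactly the existentially closed models of $T_\sigma$. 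If $(M,\sigma)$ is existentially closed and $V,W$ are as in the hypothesis, build a $\Z$-chain $(\bar a_i)$ with $(\bar a_i,\bar a_{i+1})$ realising the unique generic type of $\sigma^i(W)$ over $M$ and $\bar a_{i+1}\ind_{\bar a_i}(\bar a_j)_{j<i}$ over $M$: dominance of the projections lets one continue in either direction, and a standard induction on finite index sets, using stationarity over the model $M$, shows $\tp_{\L}((\bar a_i)_{i\in I}/M)$ is the $\sigma$-image of $\tp_{\L}((\bar a_i)_{i\in I-1}/M)$; hence $\sigma\cup\{(\bar a_i,\bar a_{i+1})\}$ is elementary, extends to $\tau\in\Aut(N)$ for a monster $N\succeq M$, and in $(N,\tau)\supseteq(M,\sigma)$ the existential $\L_\sigma$-formula ``$\exists\bar a\,(\bar a\in V\wedge(\bar a,\sigma(\bar a))\in W)$'' holds, so it descends to $M$. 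Conversely, if $(M,\sigma)$ satisfies the scheme, $(M,\sigma)\subseteq(N,\tau)\models T_\sigma$ and $\bar a_0\in N$ realises a quantifier-free $\L_\sigma$-formula $\theta$ over $M$, collect $\bar a^*=(\bar a_0,\tau\bar a_0,\dots,\tau^n\bar a_0)$ with $n$ large enough to see $\theta$, put $V:=\Loc(\bar a^*/M)$ intersected with the $\L$-set that $\theta$ visibly defines (still Morley degree $1$) and $W:=\Loc((\bar a^*,\tau\bar a^*)/M)\cap(V\times\sigma(V))$; then $W$ has Morley degree $1$, its projections are dominant, and the coordinate overlaps of $\bar a^*$ and $\tau\bar a^*$ are $M$-definable equations forcing every $(\bar a',\sigma(\bar a'))\in W$ to have block $i$ equal to $\sigma^i(\text{block }0)$, so the witness produced by the axiom realises $\theta$ inside $M$. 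In both halves it is essential that $V$ and $W$ have Morley degree $1$ — so that the chain refers to \emph{the} generic type, and because loci have degree $1$ — and this clause is exactly what DMP makes first-order.

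$(\Rightarrow)$ Suppose $TA$ exists, i.e.\ the existentially closed models of $T_\sigma$ form an elementary class; fix $\phi(\bar x,\bar y)$ and, for fixed $r$, restrict to the definable set of $\bar b$ with $\RM(\phi(\bar x,\bar b))=r$, where we must see $\{\bar b:\DM(\phi(\bar x,\bar b))=d\}$ is definable. The idea is to attach to $\phi(\bar x,\bar b)$ a geometric configuration — a $\sigma$-twisted finite-to-finite correspondence built from the top-rank components of $\phi(\bar x,\bar b)$, of the kind occurring in $(\Leftarrow)$ — so that its solvability is recorded by an $\L_\sigma$-formula $\delta_d(\bar b)$, that existential closedness forces a solution exactly when the configuration is assembled from the genuine $d$ components (via the chain argument), and that the configuration degenerates, so $\delta_d$ fails, when $\DM(\phi(\bar x,\bar b))\neq d$; since $TA$ is first-order, $\delta_d$ then defines $\{\bar b:\DM(\phi(\bar x,\bar b))=d\}$ in every model, which is DMP. (Alternatively one routes this through the Baldwin--Shelah criterion for the existence of $TA$ — a $\Delta$-type strengthening of nfcp — which for strongly minimal $T$ unwinds to definability of Morley rank and degree in families.)

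The main obstacle is not the amalgamation: the chain construction in $(\Leftarrow)$ is the familiar $ACFA$-style argument and uses nothing beyond stability and model-completeness. It is the matter of \emph{first-orderness}. In $(\Leftarrow)$ one must recognise that, for strongly minimal $T$, the only clause of the geometric axioms that is not already definable in families is ``Morley degree $1$'', so that DMP is precisely the hypothesis needed to write the scheme down. In $(\Rightarrow)$ the delicate step is conversely to engineer a geometric configuration whose $\L_\sigma$-definable solvability reflects the Morley \emph{degree} exactly — rather than merely the Morley rank, or a coarser invariant such as the cycle type of $\sigma$ acting on the components — since a priori existential closedness provides solutions to many configurations with no regard for multiplicities.
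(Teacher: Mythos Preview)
The paper does not prove this statement; it is quoted as a fact from \cite{HH07}, so there is no proof here to compare against.

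Your $(\Leftarrow)$ direction is essentially correct and is the argument of \cite{CP98}, recorded in this paper as Example~\ref{Ex:AddDMP}: in a strongly minimal theory Morley rank is additive (it coincides with pregeometry dimension), so DMP yields a geometric notion of genericity and $TA$ exists via exactly the scheme you write down. The chain construction and the block-overlap trick are standard and fine.

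Your $(\Rightarrow)$ direction, however, is not a proof. You promise to ``attach to $\phi(\bar x,\bar b)$ a geometric configuration \dots\ so that its solvability is recorded by an $\L_\sigma$-formula $\delta_d(\bar b)$'', but the configuration is never specified; your own final paragraph is an explicit acknowledgement that engineering such a $\delta_d$ is the delicate step, and you do not carry it out. The difficulty is genuine: the axiom scheme from $(\Leftarrow)$ only ever mentions degree-$1$ sets, so it is not at all clear how to produce, from the bare hypothesis that the existentially closed models of $T_\sigma$ form an elementary class, an $\L_\sigma$-sentence whose truth value over $\bar b$ distinguishes $\DM(\phi(\bar x,\bar b))=d$ from $\DM(\phi(\bar x,\bar b))=d'$. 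The alternative you float --- routing through the Baldwin--Shelah criterion and asserting that for strongly minimal $T$ it ``unwinds to definability of Morley rank and degree'' --- is likewise left as a bare claim; that unwinding is precisely the non-trivial content of \cite{HH07}. As it stands, the $(\Rightarrow)$ half is a plan rather than an argument.
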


Recall that a theory $T$ of finite Morley rank has the \emph{DMP (definable multiplicity property)} if 
for any pair of natural numbers $(r,d)$ and any formula $\phi(\x,\b)$ with $\RDM(\phi(\x,\b))=(r,d)$ there 
exists $\theta(\z)\in\tp(\b)$ such that $\RDM(\phi(\x,\b'))=(r,d)$ whenever $\models\theta(\b')$. 
(See \cite{Hr92} for a discussion of the DMP.)

\subsection{A framework for geometric axioms} \label{S:GeometricFranework}
The framework we present here allows to unify existing proofs showing that $TA$ exists 
for specific stable theories $T$. The common feature of these proofs is the axiomatisation 
of $TA$ in terms of what is called `geometric axioms'. In a way, we give in 
the sequel a general principle to organise such proofs. Compared to the characterisation of stable 
complete theories in which the generic automorphism is axiomatisable given in  \cite{BS01}, the 
criterion we present is of a more `geometric' nature, since it brings global considerations into play. 

Before we give definitions, let us start with a motivating example. If $(M,\sigma)\models T_{\sigma}$ and $X\subseteq M^n$ is $\L_M$-definable, say 
$X=\phi(M,\b)$ for some $\L$-formula $\phi(\x,\y)$ and $\b\in M$, let $X^{\sigma}=\{\sigma(\overline{c})\in M^n\mid\overline{c}\in X\}=\phi(M,\sigma(\b))$. 
Clearly, $\RDM(X)=\RDM(X^{\sigma})$.

\begin{Ex}[\cite{CP98}]\label{Ex:AddDMP}
Let $T$ be a theory of finite Morley rank with DMP and such that $\RM$ is additive: for all $\a,\b$ 
and $C$ one has $\RM(\a\b/C)=\RM(\a/\b C)+\RM(\b/C)$. 

Then $TA$ exists. More precisely, if $(M,\sigma)\models T_{\sigma}$, then $(M,\sigma)$ is existentially closed (i.e. 
a model of $TA$) if and 
only if the following condition holds:

\begin{itemize}\item[($*$)]Assume that $X\subseteq M^n$ and $Y\subseteq X\times X^{\sigma}$ are $\L_M$-definable sets of Morley degree 1, such 
that if $(\a,\a')$ is generic in $Y$ (over $M$), then $\a$ is generic in $X$ and $\a'$ is generic in $X^{\sigma}$. Then, there exists $\overline{c}\in M^n$ such 
that $(\overline{c},\sigma(\overline{c}))\in Y$.
\end{itemize} 

\smallskip

If $f:Y\rightarrow X$ is a definable function, with $\RDM(Y)=(n,1)$, $\RDM(X)=(m,1)$, then, by the additivity of $\RM$, $f$ maps the generic type of $Y$ to the generic type of 
$X$ if and only if $\RM(\{\a\in X\,\mid\,\RM(f^{-1}(\a))=n-m\})=m$. Since $\RM$ and $\DM$ are definable in $T$ by assumption, 
this shows that the condition ($*$) may be expressed in a first order way. 
\end{Ex}

Let  $R_{\g}$ be a relation defined on pairs of the form $(p(\x),\phi(\x))$, where $p(\x)\in S(M)$ is a finitary type over some 
model $M\models T$ and $\phi(\x,\b)\in p$ is a formula. 
\begin{itemize}
\item If $(p,\phi)$ is in $R_{\g}$, we say that \emph{$p$ is generic in $\phi$}. A tuple $\a\in N\elres M$ is 
\emph{generic in $\phi$ over $M$} (where $\phi$ is a formula with parameters from the model $M$) 
if $p:=\tp(\a/M)$ is generic in $\phi$, i.e.\ if the pair $(p,\phi)$ is in $R_{\g}$. 
\item A formula $\psi(\x,\z)$ (without parameters) is called \emph{nice} if for any $\b$ with 
$\psi(\x,\b)\neq\emptyset$ and any model $M$ containing $\b$ there is a unique type $p\in S(M)$ 
which is generic in $\psi(\x,\b)$. A formula $\psi(\x,\b)\neq\emptyset$ is called nice if $\psi(\x,\z)$ is 
nice.
\item A type $p\in S(M)$ is \emph{nice} if $N(p)\vdash p$ holds, where
$$N(p):=\{\psi(\x,\b)\mbox{ nice }\,\mid\, p 
\mbox{ is generic in } \psi(\x,\b)\}.$$    
\end{itemize}

\begin{Defi} \label{D:GeometricGenericity}
The relation $R_{\g}$ is a \emph{geometric notion of genericity} (for $T$) if the following properties 
hold:

\begin{enumerate}
\item[(1)] (\emph{Invariance.}) $R_{\g}$ is invariant under automorphisms. 
\item[(2)] (\emph{Coherence.}) Let $p\in S(N)$, $M\elex N$ and $\phi$ a nice formula with parameters 
from $M$. Then the (unique) generic type of $\phi$ over $N$ restricts to the generic type of $\phi$ over $M$. 
\item[(3)] (\emph{Enough nice types.}) For every $n$-type $p_0$ over some model $M$, there is 
(for some $m$) a nice type $p\in S_{n+m}(M)$ such that $\pi(p)=p_0$, where $\pi$ 
is the natural projection $S_{n+m}(M)\rightarrow S_n(M)$. 
\item[(4)] (\emph{Definability of generic projections.}) Let $\x\supseteq \x_1$, and let $\psi(\x,\b)$ and 
$\phi(\x_1,\b_1)$ be nice formulas, with generic types $p(\x)$ and 
$p_1(\x_1)$ in $S(M)$, respectively. Assume that $\models\psi(\x,\b)\rightarrow\phi(\x_1,\b_1)$ and 
$\pi_1(p)=p_1$. Then there is $\theta(\z,\z_1)\in\tp(\b\b_1)$ such that 
for all $\b'\b'_1\models\theta(\z,\z_1)$, one has $\psi(\x,\b')\neq\emptyset$, $\models\psi(\x,\b')\rightarrow\phi(\x_1,\b'_1)$ and the unique generic type $p'$ of $\psi(\x,\b')$ projects onto the unique generic type 
$p_1'$ of $\phi(\x_1,\b_1')$.
\end{enumerate}
\end{Defi}

In a stable theory the non-forking extension of a stationary type is the unique extension which is 
invariant under automorphisms. Thus, for a nice formula $\phi(\x,\b)$ and $\b\in M\elex N$, the generic 
type of $\phi$ over $N$ is the non-forking extension of the generic type over $M$.

Here is the result the notion is made for.

\begin{Prop}     \label{P:PartialGenTAExists}
Suppose $T$ admits a geometric notion of genericity $R_{\g}$. Then $TA$ exists. 
\end{Prop}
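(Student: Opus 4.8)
The plan is to show that the class of existentially closed models of $T_\sigma$ is elementary by producing, for each relevant configuration of nice formulas, a first-order axiom of ``geometric'' type, and then verifying that a model of $T_\sigma$ satisfies all these axioms if and only if it is existentially closed. First I would fix what a solvable configuration should be: given nice formulas $\psi(\x,\z)$ and $\chi(\y,\z')$ together with a nice formula $\eta(\x,\y,\w)$ such that $\models \eta(\x,\y,\w)\rightarrow \psi(\x,\z)\wedge\chi(\y,\z')$ (with $\z,\z'$ among the coordinates of $\w$) and such that the generic type of $\eta$ projects onto the generic types of $\psi$ and of $\chi$, the associated axiom should say: for all parameters $\b$ realising the defining $\theta$ from clause (4) of Definition \ref{D:GeometricGenericity} (applied twice, to both projections), if $\sigma(\b_\psi)=\b_\chi$ in the appropriate sense, i.e.\ if the parameter tuple for $\chi$ is the $\sigma$-image of that for $\psi$ on the shared coordinates, then there is $\overline{c}$ with $(\overline{c},\sigma(\overline{c}))\models\eta$. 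Clause (4) is exactly what makes ``$\b$ ranges over parameters for which $\eta$ is a nice formula over $\psi,\chi$ with the correct generic projections'' a first-order condition on $\b$, so each such axiom is genuinely first order; this mirrors the final paragraph of Example \ref{Ex:AddDMP}.

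Next I would prove the easy direction: any existentially closed $(M,\sigma)$ satisfies these axioms. Given parameters as above (so that $\chi$'s parameters are $\sigma$-images of $\psi$'s), embed $(M,\sigma)$ in a sufficiently saturated $(N,\tau)\models T_\sigma$; by ``enough nice types'' and ``coherence'' one can realise in $N$ the generic type $q$ of $\eta$ over $M$, say by $(\a,\a')$, with $\a$ generic in $\psi$ and $\a'$ generic in $\chi=\psi^\sigma$ over $M$. The point is to build, inside an elementary extension, an $\L$-automorphism extending $\sigma$ on $M$ that sends $\a$ to $\a'$: since $\psi$ and $\chi=\psi^{\sigma}$ are nice and $\sigma\!\upharpoonright\! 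M$ conjugates the generic type of $\psi$ over $M$ to the generic type of $\chi$ over $M$ (invariance of $R_{\g}$ under the automorphism $\sigma$), stationarity of these generic types lets us amalgamate to get $\tau' \supseteq \sigma$ with $\tau'(\a)=\a'$; then $(\overline{c},\sigma(\overline{c}))\models\eta$ for $\overline{c}=\a$ in this extension, and existential closedness pulls a solution back into $M$.

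The harder direction — and the main obstacle — is the converse: a model $(M,\sigma)$ of $T_\sigma$ satisfying all the geometric axioms is existentially closed. Here one takes an extension $(N,\tau)\succcurlyeq(M,\sigma)$ and a tuple $\a'\in N$, and must find $\overline{c}\in M$ realising the same quantifier-free $\L_\sigma$-type over $M$ — equivalently, by model-completeness of $T$ and Fact \ref{T:CP}-style considerations, realising the same $\L$-type as $\a'$ together with $\sigma(\overline{c})$ matching $\tau(\a')$, $\sigma^2(\overline{c})$ matching $\tau^2(\a')$, and so on. Using ``enough nice types'' I would thicken $(\a',\tau(\a'),\dots)$ to a tuple whose $\L$-type over $M$ is nice, presenting it as generic in some $\psi$; the $\sigma$-shift of the relevant coordinates is then generic in $\psi^\sigma$, and the whole finite $\tau$-orbit configuration is generic in a nice formula $\eta$ over $M$. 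By ``definability of generic projections'' the parameters of $\eta$ (which live in $M$, after adjusting by the action of $\sigma$ on the parameters of $\psi$) satisfy the very $\theta$ defining the hypothesis of the corresponding geometric axiom; so that axiom, which $(M,\sigma)$ is assumed to satisfy, produces $\overline{c}\in M$ with $(\overline{c},\sigma(\overline{c}))\models\eta$, hence realising the required $\L_\sigma$-quantifier-free type. The delicate points I expect to fight with are: making ``$\chi$'s parameters are the $\sigma$-image of $\psi$'s'' precise and first-order (so the axiom really is a sentence in $\L_\sigma$), and handling longer $\sigma$-orbits, where $\eta$ must encode $(\a,\sigma\a,\dots,\sigma^k\a)$ and one needs the projections to all intermediate generic types to be definable — this is where repeated use of clause (4), together with coherence to pass between the model $M$ and its extensions, does the work, and where I would be most careful about the bookkeeping of which coordinates are shifted by $\sigma$.
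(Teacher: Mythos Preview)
Your approach is the paper's approach --- geometric axioms built from nice formulas, with clause~(4) providing the first-order definability --- but two points deserve correction.

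First, your axiom scheme is missing a variable. You write the nice formula as $\eta(\x,\y,\overline{w})$ and ask for $\overline{c}$ with $(\overline{c},\sigma(\overline{c}))\models\eta$. In the backward direction you will thicken $\a_0$ to $\a$ with $\tp(\a/M)=p$ nice, so that $\sigma(\a)\models p^{\sigma}$; but there is no reason for $\tp(\a,\sigma(\a)/M)$ itself to be nice, so you must thicken \emph{again}, to $(\a,\sigma(\a),\a_r)$ with nice type $\tilde{p}(\x,\x',\x_r)$. The extra coordinates $\a_r$ bear no $\sigma$-relation to anything, so the correct axiom is
\[
\forall\tilde{z}\,\z\;\exists\x\,\x_r\;\bigl[\Theta(\tilde{z},\z)\rightarrow\psi(\x,\sigma(\x),\x_r,\tilde{z})\bigr],
\]
with a free residual block $\x_r$ existentially quantified alongside $\x$. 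Without it your $\eta$ cannot be taken nice, and then clause~(4) does not apply. This also clarifies the order of thickening: first thicken $\a_0$ to get a nice $p$ (so that both projections $p$ and $p^{\sigma}$ are nice and clause~(4) is usable), and only then thicken the pair.

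Second, the worry about ``longer $\sigma$-orbits'' is unnecessary. A quantifier-free $\L_{\sigma}$-formula in $\x_0$ over $M$ involves $\x_0,\sigma(\x_0),\dots,\sigma^k(\x_0)$; renaming $\y=(\x_0,\dots,\sigma^{k-1}(\x_0))$ reduces it to an $\L$-formula in $(\y,\sigma(\y))$ plus equalities between overlapping coordinates. This is the ``standard reduction'' the paper invokes, and it means the axioms need only treat one application of $\sigma$. No iterated use of clause~(4) along the orbit is required. (Minor: in the backward direction you want $(N,\tau)\supseteq(M,\sigma)$ in $T_{\sigma}$, not $\succcurlyeq$; and you only need to realise a single quantifier-free \emph{formula}, not the full type.)
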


\begin{proof}

We give `geometric axioms' for $TA$, using the relation $R_{\g}$. 
Let $(M,\sigma)\models T_{\sigma}$ and $\tilde{p}(\x,\x',\x_r)\in S_{2n+k}(M)$ a nice type restricting to 
nice types $p(\x)$ and $p'(\x')$ in $S_n(M)$ such that $p'=\sigma(p)$. 
Let $\psi(\x,\x',\x_r,\tilde{b})\in N(\tilde{p})$, $\phi(\x,\b)\in N(p)$ and thus (by invariance) 
$\phi(\x',\sigma(\b))\in N(p')$ such that 
$$\models\psi(\x,\x',\x_r,\tilde{b})\rightarrow\phi(\x,\b)\land\phi(\x',\sigma(\b)).$$
Moreover, let $\theta(\tilde{z},\z)$ and $\theta'(\tilde{z},\z')$ be given by property (4) 
applied to the pairs of formulas $(\psi(\x,\x',\x_r,\tilde{b}),\phi(\x,\b))$ 
and $(\psi(\x,\x',\x_r,\tilde{b}),\phi(\x',\sigma(\b)))$. Put
$\Theta(\tilde{z},\z):=\theta(\tilde{z},\z)\wedge\theta'(\tilde{z},\sigma(\z))$. The corresponding 
axiom for this choice of formulas is: 
$$\forall\tilde{z}\z\exists\x\,\x_r[\Theta(\tilde{z},\z)
\rightarrow\psi(\x,\sigma(\x),\x_r,\tilde{z})].$$
We call these axioms the \emph{geometric axioms}. We will show that a model of $T_{\sigma}$ is existentially closed if and only if it satisfies the 
geometric axioms. This is straight-forward and will finish the proof.

Let $(M,\sigma)$ be an e.c.\ model of $T_{\sigma}$, and suppose that $(M,\sigma)\models\Theta(\tilde{b},\b)$. This means that the unique generic 
type $\tilde{p}(\x,\x',\x_r)$ of 
$\psi(\x,\x',\x_r,\tilde{b})$ (over $M$) restricts to the unique 
generic types $p$ and $p'=p^{\sigma}$ of $\phi(\x,\b)$ and 
$\phi(\x',\sigma(\b))$, respectively ($p'=\sigma(p)$ being a consequence of invariance). Choose 
$\tilde{a}=(\a,\a',\a_r)\models\tilde{p}$ (with $\tilde{a}$ from some $M^*\elres_{\L}M$). 
So $\a\models p$ and $\a'\models p^{\sigma}$. Going to some extension of $M^*$ if necessary, 
we may thus assume there is $\sigma^*\in Aut(M^*)$ extending 
$\sigma$ such that $\sigma^*(\a)=\a'$. In particular, $(M^*,\sigma^*)\models 
\exists\x\,\x_r\psi(\x,\sigma(\x),\x_r,\tilde{b})$. So the same is true in $(M,\sigma)$, as this is an e.c.\ 
model, and $(M,\sigma)$ satisfies the geometric axiom corresponding to $\psi$ and $\phi$. 

Conversely, let $(M,\sigma)$ be a model of $T_{\sigma}$ together with all geometric axioms. Let 
$(M,\sigma)\subseteq (N,\sigma)\models T_{\sigma}$ and $\a_0\in N$, satisfying some 
quantifier free $\L_{\sigma}$-formula with parameters from $M$. A standard reduction shows 
that we may assume that this formula is of the form $\chi(\x_0,\sigma(\x_0),\b_0)$, where 
$\chi(\x_0,\x_0',\z_0)$ is an $\L$-formula (without quantifiers). Put $p_0:=\tp_{\L}(\a_0/M)$ and 
$p_0'=p_0^{\sigma}:=\tp_{\L}(\sigma(\a_0)/M)\in S_n(M)$. By condition (3) in Definition \ref{D:GeometricGenericity}, there is a nice type $p\in S_{n+m}(M)$ restricting to $p_0$. 
Replacing $(N,\sigma)$ by some extension $(\tilde{N},\tilde{\sigma})$ if necessary, we may assume 
there exists a tuple $\a\in N$ containg $\a_0$ such that $\models p(\a)$. So $\sigma(\a)\supseteq\sigma(\a_0)$. Again by (3), applied to $\tp_{\L}(\a,\sigma(\a)/M)$, we may choose a nice type 
$\tilde{p}(\x,\x',\x_r)\in S(M)$ restricting to $p(\x)$ and 
$p'=p^{\sigma}$, respectively.

Now we choose some arbitrary $\phi(\x,\b)\in N(p)$, then we choose 
$\psi(\x,\x',\x_r,\tilde{b})\in N(\tilde{p})$ 
such that $\psi(\x,\x',\x_r,\tilde{b})$ implies $\chi(\x_0,\x_0',\b_0)\wedge\phi(\x,\b)\wedge\phi(\x',\sigma(\b))$ (this is possible since $N(\tilde{p})\vdash\tilde{p}$).  

Since $(M,\sigma)\models\Theta_{\psi,\phi}(\tilde{b},\b)$, the corresponding axiom ensures that there 
are tuples $\overline{\alpha},\overline{\alpha}_r\in M$ such that  
$M\models \psi(\overline{\alpha},\sigma(\overline{\alpha}),
\overline{\alpha}_r,\tilde{b})$. In particular, $(M,\sigma)\models 
\chi(\overline{\alpha}_0,\sigma(\overline{\alpha}_0),\b_0)$, where $\overline{\alpha}_0$ denotes 
the appropriate subtuple of $\overline{\alpha}$. This shows that $(M,\sigma)$ is an e.c.\ model. 
\end{proof}

\begin{Exs}\label{R:GeometricExamples}
The following known proofs of existence of $TA$ are instances of Proposition 
\ref{P:PartialGenTAExists}. (1) and (2) are from \cite{CP98}, and (3) is in \cite{Bu06}.

\smallskip
\noindent
(1) Let $T$ be a theory of finite additive Morley rank with DMP (see Example \ref{Ex:AddDMP}).

Genericity with repect to $\RM$ gives rise to a geometric notion of genericity. Nice formulas 
correspond to formulas with all instances of degree 1. Properties (1) and (2) from Definition \ref{D:GeometricGenericity} are easily verified, (4) 
follows from additivity of $\RM$ combined with the DMP as indicated in Example \ref{Ex:AddDMP}, and (3) is a consequence of the DMP (this is a degenerate case since all types over models are nice).  
\smallskip

\noindent
(2) Let $T$ be the theory of a totally transcendental module, or more generally a complete 
theory of a one-based group $G$ which is totally transcendental. Any definable subset of $G^n$ is given by a boolean 
combination of cosets of $\acl^{eq}(\emptyset)$-definable 
(connected) subgroups of $G^n$ (see e.g.\ \cite[Cor. 4.4.6]{Pi96}); any strong type $p$ is the (unique) generic type of a coset of its stabiliser $\stab(p)$.

It is straightforward to check that genericity with respect to Morley rank (or with respect to stable group 
theory, this amounts to the same in this context) gives rise to a geometric notion of genericity. Nice formulas are formulas with all instances of Morley degree 1.

\smallskip
\noindent
(3) Let $DCF_0$ be the theory of differentially closed fields in characteristic 0. It is 
shown in \cite[Corollary 2.15]{Bu06} (rephrased in our terminology) that \emph{$D$-genericity} with 
respect to the Kolchin topology is a geometric notion of genericity.
\end{Exs}

\section{Green fields}\label{S:delta}
We present in this section a sketch of Poizat's construction of a \emph{green field} in characteristic 0 \cite{Po01} as well 
as the construction of a \emph{bad field} \cite{BHMW07}. The green field is obtained using Hrushovski's 
amalgamation method (without collapse), whereas the bad field is constructed by collapsing the former to a field of finite Morley rank. (We refer to Section \ref{S:Other} for a more systematic treatment of this amalgamation method.)

In both constructions, uniformity results for intersections of tori 
with algebraic varieties in characteristic 0 have to be used in order to establish the necessary 
definability properties which make Hrushovski's amalgamation method work. We recall these uniformity 
results (called \emph{weak CIT}) since they will be used in our construction of a geometric 
genericity notion in the green field and also in the proof that the bad fields have the DMP (see Section \ref{S:Gen_Green}).

\subsection{Dimension, codimension and predimension}
In the following we gather the results which will be needed to get a definable control on the (pre-)dimension 
in the green fields.

Let us fix some notation (mainly following \cite{BHMW07}):  $\mathbb{C}$ denotes an algebraically closed field of characteristic $0$. A 
variety $V$ will always be a closed subvariety of some $\Gm^{n}$ (which may be identified with the set $\Cn{n}$ of 
its $\mathbb{C}$-rational points). A \emph{torus} is  a connected algebraic subgroup of $\Gm^n$. It is 
described by finitely many equations of the form: $x_1^{r_1}\cdot\ldots\cdot x_n^{r_n}=1$, where 
$r_i\in\Z$. If $T$ is a torus and $\a$ is generic in $T$ over $\mathbb{C}$, then the $\Q$-linear dimension 
of $\a$ over $\mathbb{C}^*$ (modulo torsion) equals the algebraic dimension of $T$ (as a variety) and will be denoted 
by $\ldim (T)$ or $\dim (T)$. Given a closed and irreducible subvariety $W$ in $\Gm^n$, its 
\emph{minimal torus} is the smallest torus $T$, such that $W$ lies in some coset $\a\cdot T$.
 In this case, we define its \emph{codimension} $\cd(W):=\dim(T)-\dim(W)=\ldim(W)-\dim(W)$, 
where $\ldim(W):=\dim(T)$. The \emph{predimension} of $W$ is given by $\delta(W):=2\dim(W)-\dim(T)=\dim(W)-\cd(W)$.
 
An irreducible subvariety  $W\subseteq V$ is \emph{$\cd$-maximal in $V$} if $\cd(W')>\cd(W)$ for every irreducible subvariety  $W\subsetneq W'\subseteq V$. Clearly, irreducible components of $V$ and cosets of tori maximally contained in $V$ are examples of $\cd$-maximal subvarieties.

We now present a result which was stated by Poizat \cite[Corollaire 3.7]{Po01}. It is a reformulation of a result proved by Zilber \cite{Zi02} (and later generalised by 
Kirby \cite{Ki09} to the context of semiabelian varieties).

\begin{Fact}\label{T:Poizat}  Let $\mathcal{V}=\{V_{\b}\,|\,\b\models\theta(\z)\}$ be a uniformly definable family of closed subvarieties of $\Gm^{n}$. There exists a finite collection of tori $\mathcal{T}(\mathcal{V})=\{T_0,\dots,T_r\}$, such that for any member $V_{\b}$ of the 
 family and any $\cd$-maximal subvariety $W$ of $V_{\b}$, the minimal torus of $W$ belongs 
to $\mathcal{T}(\mathcal{V})$.
\end{Fact}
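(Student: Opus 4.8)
The statement is Fact \ref{T:Poizat}, a uniform-finiteness result: a definable family $\mathcal{V}$ of subvarieties of $\Gm^n$ uses only finitely many minimal tori as $V_\b$ and its $\cd$-maximal subvarieties range. Since the excerpt attributes this to Zilber \cite{Zi02} (as reformulated by Poizat \cite{Po01}), the natural strategy is not to reprove it from scratch but to deduce it from \emph{weak CIT} (the consequence of Ax's theorem mentioned in the introduction). Recall that weak CIT controls the \emph{atypical} components of intersections $V \cap \a T$ — components whose dimension exceeds $\dim V + \dim T - n$ — by asserting that for a definable family there is a \emph{finite} list of tori such that every atypical component of every such intersection lies in a coset of one of them. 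So the first step is to state weak CIT precisely in the form: given a definable family $\{V_\b\}$, there is a finite set $\mathcal{J}$ of tori such that whenever $W$ is an atypical component of $V_\b \cap \a T$ for some coset $\a T$, the minimal torus of $W$ belongs to $\mathcal{J}$.

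The second step is to recognize that a $\cd$-maximal subvariety $W \subseteq V_\b$ is, up to passing to a coset, exactly an atypical component of an intersection of $V_\b$ with a coset of its own minimal torus. Concretely, if $W$ is irreducible with minimal torus $T_W$ and $W \subseteq \a T_W$, then $W$ is a component of $V_\b \cap \a T_W$; and $\cd$-maximality says no irreducible $W' $ with $W \subsetneq W' \subseteq V_\b$ has $\cd(W') \le \cd(W)$, which forces $W$ to be "as large as codimension allows" inside that coset — precisely the atypicality condition $\dim W > \dim V_\b + \dim T_W - n$ once one tracks the dimension bookkeeping via $\cd(W) = \ldim(W) - \dim(W) = \dim(T_W) - \dim(W)$. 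I would verify this translation carefully: the key inequality is that $\cd$-maximality of $W$ in $V_\b$ implies $W$ is an atypical (or at least "non-generic") component of $V_\b \cap \a T_W$, so that weak CIT applies and puts $T_W$ into the finite list $\mathcal{J}$ associated to the family $\{V_\b \cap \a T : \a, T\}$ (which is still definable, being parametrized by $\b$ together with the coset data).

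The third step is a compactness / Noetherianity argument to close up: weak CIT as usually stated may require first fixing the family of intersections, so one iterates — start with $\{V_\b\}$, apply weak CIT to get tori $T_1,\dots,T_s$; the relevant cosets $\a T_i$ then themselves form a definable family, intersect these with the $V_\b$ to get a new definable family, apply weak CIT again, and argue (using that dimension strictly drops, or using the DCC on tori) that the process terminates, yielding the desired finite collection $\mathcal{T}(\mathcal{V}) = \{T_0,\dots,T_r\}$. Finally one should remark that the components which are irreducible components of $V_\b$ itself contribute minimal tori that are handled either directly (the ambient torus $\Gm^n$, or generic behaviour) or by the same intersection argument with $T = \Gm^n$, since an irreducible component of $V_\b$ is trivially $\cd$-maximal.

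\textbf{The main obstacle.} The real work is the dictionary between the geometric notion "$\cd$-maximal subvariety" and the notion "atypical component of a torus-coset intersection" that weak CIT is phrased in terms of — making sure the dimension inequalities line up so that $\cd$-maximality genuinely forces atypicality (and not merely typicality, which weak CIT says nothing about), and handling the boundary cases where $W$ is typical but still $\cd$-maximal (these should only happen when $T_W = \Gm^n$ or when $W$ is a full irreducible component, both manageable). The termination of the iteration is a second, more routine, point: it follows from the descending chain condition on algebraic subgroups of $\Gm^n$ together with the fact that each round either stabilizes the torus list or produces strictly smaller-dimensional cosets to intersect with. Once the translation is set up, the finiteness is essentially immediate from weak CIT, so I expect the proof to be short modulo citing \cite{Ax71} (via weak CIT) and \cite{Zi02}.
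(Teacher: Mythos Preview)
The paper does not give its own proof of this statement. It is stated as a \emph{Fact}, attributed to Poizat \cite[Corollaire 3.7]{Po01} as a reformulation of Zilber \cite{Zi02} (with the generalisation to semiabelian varieties in Kirby \cite{Ki09}), and the sentence immediately following it reads ``This property, which Zilber called \emph{weak CIT}, is at the origin of a series of definability results\ldots''. In other words, in this paper the Fact \emph{is} weak CIT, not a consequence of it; there is simply no proof here to compare your proposal against.

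Your sketch is therefore not a comparison with the paper but an attempt to reconstruct the content of the cited references, deriving the $\cd$-maximal formulation from the atypical-components formulation of weak CIT. That translation is indeed the standard one and your outline is broadly correct. Two comments. First, the iteration in your third step is more machinery than is actually needed: once one has the atypical-components statement uniformly in $\b$ and over all subtori $T\subseteq\Gm^n$, a single application already yields a finite list containing the minimal torus of every proper $\cd$-maximal $W$ (the point being that such a $W$ is a component of $V_{\b}\cap \a T_W$ and, by $\cd$-maximality, an atypical one unless $W$ is a full irreducible component of $V_{\b}$). Second, your ``boundary cases'' remark is exactly where the care is needed, and handling the irreducible components of $V_{\b}$ themselves is done either by adding $\Gm^n$ to the list or by a separate compactness argument on the family --- both routine. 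In any case you are reproving \cite{Zi02}/\cite{Po01}, which the present paper deliberately does not do.
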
 

This property, which Zilber called \emph{weak CIT}, is at the origin of a series of definability results, as we will see in the sequel.

A matrix $M=(m_{i,j})\in \mathrm{Mat}(n\times n,\Z)$ acts on $\Gm^{n}$. For $\a\in \Gm^{n}$, we put 
$$\a^M:=\left(\prod_{j=1}^n a_j^{m_{1,j}},\ldots,\prod_{j=1}^n a_j^{m_{n,j}}\right).$$ 
\begin{Defi}
Let $V\subseteq\Gm^{n}$ be an irreducible variety defined over the algebraically closed field $K$. 
The variety $V$ is called \emph{free} if its minimal torus is 
 equal to $\Gm^{n}$. It is called \emph{rotund}\footnote{In Zilber's terminology \cite{Zi04}, our notion of rotund corresponds to `$G$-normal' and `$G$-free'; the 
 term `rotund' is taken from \cite{Ki09}. Since we only use rotund varieties which 
 are free as well, we include the freeness condition in our definition.} if it is 
 free and if for any  $K$-generic tuple $\a$ in $V$ and any 
 $M\in \mathrm{Mat}(n\times n,\Z)$, putting $W:=\Loc(\a^M/K)$, one has $\delta(W)\geq0$.
\end{Defi}

A property $\mathcal{P}$ of algebraic varieties is called \emph{definable} if for any uniformly definable family of algebraic varieties 
$\mathcal{V}=\{V_{\b}\,|\,\b\models\theta(\z)\}$, the set of parameters $\b$ such that $V_{\b}$ has the 
property $\mathcal{P}$ is definable.

\begin{Fact}[\cite{Ki09}] \label{F:RotundDef}
\begin{enumerate}
\item[(1)] Freeness is a definable property.
\item[(2)] Rotundity is a definable property.
\end{enumerate}
\end{Fact}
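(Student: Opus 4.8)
The plan is to handle the two properties separately, in each case replacing an a priori infinite quantifier — over monomial exponents for (1), over integer matrices $M$ for (2) — by a finite one, and then to conclude from constructibility of images under morphisms and from definability of dimension in $\mathrm{ACF}_0$. The link between the two parts is that the minimal torus of a variety is itself produced by a uniformly constructible operation.

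\emph{Freeness.} For irreducible $W\subseteq\Gm^n$ and any $w_0\in W$, the irreducible set $W\cdot W^{-1}$ contains $1$, and by Chevalley's theorem on subgroups generated by irreducible varieties the group $\langle W\cdot W^{-1}\rangle$ is already Zariski closed and connected and equals $(W\cdot W^{-1})^{2n}$; this is the smallest torus $T$ with $W\subseteq w_0\cdot T$, i.e.\ the minimal torus $T_W$. Carried out in a uniformly definable family $\mathcal V=\{V_\b\mid\b\models\theta(\z)\}$, this exhibits $\b\mapsto T_{V_\b}$ as a uniformly constructible family of tori, so $\b\mapsto\dim T_{V_\b}$ is a definable function. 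Since the only $n$-dimensional connected subgroup of $\Gm^n$ is $\Gm^n$ itself, $V_\b$ is free iff $\dim T_{V_\b}=n$, a definable condition. (Alternatively one bounds, by a Bézout estimate uniform in $\theta$, the size of the exponents needed to cut out $T_{V_\b}$, thereby reducing freeness of $V_\b$ to the non-triviality on $V_\b$ of each of finitely many monomials $\x^{\overline r}$.)

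\emph{Rotundity.} Fix a free $V$ over an algebraically closed field $K$, let $\a$ be $K$-generic in $V$, $M\in\mathrm{Mat}(n\times n,\Z)$ and $W=\Loc(\a^M/K)=\overline{M(V)}$. As $M$ is a homomorphism and $V$ is free, $M(\langle V\cdot V^{-1}\rangle)=M(\Gm^n)$, so the minimal torus of $W$ is exactly $S_M:=M(\Gm^n)$ and $\ldim(W)=\dim S_M=\operatorname{rank}(M)$. Writing $S:=(\ker M)^{\circ}$ (so $\dim S=n-\operatorname{rank}(M)$) and using that the generic fibre of $x\mapsto x^M$ on $V$ is the component through $\a$ of $\a S\cap V$, the fibre-dimension theorem gives
\[
\delta(W)\;=\;2\dim W-\dim S_M\;=\;2\dim V-2\dim(\a S\cap V)-\bigl(n-\dim S\bigr).
\]
Since $\delta$ is an isogeny invariant, $\delta(W)$ depends on $M$ only through $S$, and every subtorus of $\Gm^n$ arises as $(\ker M)^{\circ}$; hence $V$ is rotund iff it is free and $2\dim(\a S\cap V)\le 2\dim V-(n-\dim S)$ holds for every subtorus $S\le\Gm^n$. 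For cosets meeting $V$ properly the left side takes its expected value $\dim V+\dim S-n$ and the inequality holds with slack $n-\dim S\ge 0$; so only the \emph{atypical} components $Z$ of $\a S\cap V$ through the generic point can cause a failure.

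\emph{Reducing to finitely many tori.} This is where weak CIT enters, and it is the main obstacle. Suppose $\delta(W)<0$ for some $M$, and let $Z$ be an atypical component of $\a S\cap V$ through $\a$. A short computation with codimensions (using $T_Z\le S$) gives $\cd(Z)<\cd(V)$, so going up to a $\cd$-maximal subvariety of $V$ above $Z$ yields a $\cd$-maximal $\tilde Z\subsetneq V$ with $Z\subseteq\tilde Z$; by Fact \ref{T:Poizat}, applied to a suitable family built from $\mathcal V$, the minimal torus of $\tilde Z$ — and hence, with some care, a torus that may be used in place of $S$ to witness the failure — lies in a finite set $\mathcal T_0=\{T_0,\dots,T_s\}$ depending only on $\theta$. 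Consequently $V_\b$ is rotund iff it is free and $\delta\bigl(\overline{\rho_i(V_\b)}\bigr)\ge 0$ for each of the finitely many fixed quotient maps $\rho_i\colon\Gm^n\to\Gm^n/T_i$; since each $\overline{\rho_i(V_\b)}$ runs in a uniformly constructible family, its dimension and its minimal torus, hence $\delta$, are definable functions of $\b$, and a finite conjunction gives definability of rotundity. The points that would require care in a full write-up are the precise choice of the auxiliary family fed to Fact \ref{T:Poizat}, the bookkeeping that converts finiteness of minimal tori of $\cd$-maximal subvarieties into finiteness of the subtori $S$ one must test, and the check that it is enough to test the inequality on the generic type of $V$.
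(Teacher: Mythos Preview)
Your proof is correct, and for part~(2) it is close to the paper's, but part~(1) takes a genuinely different route.

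For (1), the paper simply invokes Fact~\ref{T:Poizat}: since $V_{\b}$ is itself a $\cd$-maximal subvariety of $V_{\b}$, its minimal torus lies in the finite set $\mathcal{T}(\mathcal{V})$, so freeness amounts to ruling out finitely many proper tori. Your argument instead constructs the minimal torus directly as $(W\cdot W^{-1})^{2n}$ via Chevalley/indecomposability, avoiding weak CIT altogether. This is more elementary and, as the paper itself remarks immediately after the proof, has the advantage of working in positive characteristic as well.

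For (2), both arguments use weak CIT, but from dual sides: the paper imposes the condition $(*)$ that $\delta(W)\le d$ for every maximal-dimension component $W$ of $V_{\b}\cap\overline{g}\cdot T$ with $T\in\mathcal{T}(\mathcal{V})$, and defers the verification that $(*)$ is equivalent to rotundity to \cite[Lemma~4.3]{BHMW07}; you instead reformulate rotundity as $\delta(\overline{\rho_S(V)})\ge 0$ for all subtori $S$ and then reduce to $S\in\mathcal{T}(\mathcal{V})$. The bookkeeping you flag as needing care does go through: if rotundity fails for $S$, the component $Z$ of $\a S\cap V$ through $\a$ has $\delta(Z)>d$ and $\cd(Z)<\cd(V)$; passing to a $\cd$-maximal $\tilde{Z}\supseteq Z$ only increases $\delta$, so $\delta(\tilde{Z})>d$; and then the component of $\a T_{\tilde Z}\cap V$ through $\a$ contains $\tilde Z$, giving $\delta(\overline{\rho_{T_{\tilde Z}}(V)})<0$. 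So your reduction is sound, and in effect you have unpacked what the paper outsources to \cite{BHMW07}.
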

\begin{proof}
For convenience, we include the argument.
Let $V_{\b}$ be an instance of a uniformly definable family $\mathcal{V}$ of irreducible varieties 
in $\Gm^n$. Since the minimal torus of $V_{\b}$ lies in the finite collection of tori $\{T_0,\dots,T_r\}$ attached to $\mathcal{V}$ it is sufficient to avoid all $T_i\neq\Gm^{n}$ from this 
collection to force the minimal torus of $V_{\b}$ to be equal to $\Gm^{n}$. This can be done definably and shows (1).

To prove (2), we may assume that the dimension of $V_{\b}$ is equal to $k$ throughout the family, and 
that all instances are free. Thus, $2k-n=d\geq0$, and it suffices to impose the following crucial condition:
\begin{itemize}
\item[$(*)$] \emph{For generic $\overline{g}\in V_{\b}$ and $T\in\mathcal{T}(\mathcal{V})$ and any irreducible 
component $W$ of $V_{\b}\cap\overline{g}\cdot T$ of maximal dimension, $\dim(W)-\cd(W)\leq d$ holds.}
\end{itemize}
The finiteness of $\mathcal{T}(\mathcal{V})$ implies that $\cd$ is definable. It is well known that $\dim$ is definable as 
well. Using definability of types in $ACF$, it follows that $(*)$ is a definable condition. It is not hard to see that $(*)$ is enough to guarantee rotundity of $V_{\b}$ (cf.\ the proof of \cite[Lemma 4.3]{BHMW07}).
\end{proof}

Let us mention that freeness is also a definable property in positive characteristic. One may prove this using Zilber's Indecomposibility Theorem. (We thank Martin Bays for pointing this out to us.)

\begin{Lem}\label{L:MaxDelta}
Let $V$ be an irreducible subvariety of $\Gm^n$ and let $\mathcal{T}(V)$ 
be the finite family of tori given in Fact \ref{T:Poizat}. Assume that $V$ is free. Let $W\subsetneq V$ be 
a proper irreducible subvariety such that $\delta(W)\geq\delta(V)$. 

Then the minimal torus of $W$ is contained 
in some $T\in\mathcal{T}(V)$ with $T\subsetneq\Gm^n$.
\end{Lem}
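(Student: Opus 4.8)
The plan is to analyze a $\cd$-maximal intermediate variety between $W$ and $V$. Concretely, since $W \subsetneq V$, choose an irreducible subvariety $W'$ with $W \subseteq W' \subseteq V$ which is $\cd$-maximal in $V$ and maximal with this property among those containing $W$; in fact I would first pass to a $\cd$-maximal $W'$ with $W \subseteq W' \subsetneq V$ (if $\cd(W) = \cd(V)$ already then $W$ itself sits inside such a thing, and if not one enlarges $W$ until $\cd$ stabilizes, stopping before reaching $V$ — here one uses that $V$ is free so its minimal torus is $\Gm^n$ and $\cd(V) = n - \dim(V)$). The point of invoking Fact \ref{T:Poizat} is that the minimal torus $T$ of this $\cd$-maximal $W'$ lies in the finite collection $\mathcal{T}(V)$; it remains to show $\delta$-maximality of $W$ forces $T \subsetneq \Gm^n$, and then that the minimal torus of $W$ is contained in $T$.

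For the containment of minimal tori: if $W \subseteq W'$ and $W'$ lies in a coset $\a \cdot T$ of its minimal torus, then $W$ lies in that same coset, so the minimal torus of $W$ is contained in $T$ by minimality — this part is immediate once $W'$ is in hand. The crux is therefore showing $T \neq \Gm^n$. Suppose for contradiction $T = \Gm^n$, i.e. $W'$ is itself free. Then $\cd(W') = n - \dim(W')$ and $\delta(W') = 2\dim(W') - n$. Since $W'$ is $\cd$-maximal in $V$ and $W' \subsetneq V$, we have $\cd(V) > \cd(W')$; but $V$ is free as well, so $\cd(V) = n - \dim(V)$, giving $\dim(V) < \dim(W')$ — contradiction, since $W' \subseteq V$. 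Hence $T \subsetneq \Gm^n$, as required. (One must be slightly careful to arrange $W' \subsetneq V$: if the only $\cd$-maximal-in-$V$ variety containing $W$ were $V$ itself, this argument would not apply directly; but then $\cd(W) < \cd(V)$ is impossible to maintain, so in fact one can always stop strictly below $V$ — this is where $\delta(W) \geq \delta(V)$ combined with freeness of $V$ does real work, since it constrains how $\dim$ and $\cd$ can grow along a chain from $W$ up to $V$.)

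The main obstacle I anticipate is precisely the bookkeeping around choosing $W'$ so that it is simultaneously $\cd$-maximal in $V$, strictly contained in $V$, and contains $W$ — making sure the hypothesis $\delta(W) \geq \delta(V)$ is genuinely used to rule out the degenerate case where every $\cd$-maximal-in-$V$ subvariety containing $W$ equals $V$. The comparison to keep track of is $\delta(W) \le \delta(W')$ versus $\delta(W') $ versus $\delta(V)$: one wants $\delta(W') \ge \delta(W) \ge \delta(V)$ while $\dim(W') < \dim(V)$ and $\cd(W') < \cd(V)$, and a free $W'$ would force $\delta(W') = 2\dim(W') - n < 2\dim(V) - n = \delta(V)$ (strict, since $\dim(W') < \dim(V)$), contradicting $\delta(W') \geq \delta(W) \geq \delta(V)$. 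So the inequalities close up cleanly provided the chain can be chosen with $\dim$ strictly increasing at the top step, which is exactly what $\cd$-maximality in $V$ together with $W' \subsetneq V$ delivers. Once that combinatorial setup is pinned down, the argument is short.
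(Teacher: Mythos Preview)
Your approach is correct and essentially the same as the paper's: pass to a $\cd$-maximal $W'$ between $W$ and $V$, invoke Fact~\ref{T:Poizat} to place its minimal torus $T$ in $\mathcal{T}(V)$, observe that the minimal torus of $W$ sits inside $T$, and derive a contradiction from $T=\Gm^n$. The paper's write-up is more direct only in that it opens by noting $\cd(W)<\cd(V)$ (immediate from $\dim(W)<\dim(V)$ and $\delta(W)\geq\delta(V)$), and then chooses $W'$ $\cd$-maximal subject to $\cd(W')\leq\cd(W)$; this single line disposes of all your hedging about whether $W'\subsetneq V$ can be arranged, since $\cd(W')\leq\cd(W)<\cd(V)$ forces $W'\neq V$ outright.
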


\begin{proof}
From $\dim(W)<\dim(V)$ and $\delta(W)\geq\delta(V)$ we infer $\cd(W)<\cd(V)$. Let $W'$ be $\cd$-maximal such that $W\subseteq W'\subseteq V$ and $\cd(W')\leq\cd(W)$. The minimal torus of $W$ 
is contained in the minimal torus $T$ of $W'$. Clearly $T$ is a proper subtorus of $\Gm^n$; 
moreover, $T\in\mathcal{T}(V)$ by Fact \ref{T:Poizat}.
\end{proof}

\subsection{Green colour, green fields of Poizat and bad fields}
We now recall the construction of the green field of Poizat \cite{Po01} and of the bad field \cite{BHMW07}. 
We expand the language of rings by a new unary predicate $\UE$ and thus work in 
$\mathcal{L}=\mathcal{L}_{rings}\cup\{\UE\}$. Elements in $\UE$ will be called \emph{green}, those not 
in $\UE$ are \emph{white}. We consider $\L$-structures of the form $K=(K,+,-,\times,0,1,\UE(K))$ 
such that $K$ is an algebraically closed field of characteristic 0 and $\UE(K)$ a subgroup of the multiplicative group $K^{\times}$ which is divisible and torsion free. So $\UE$ is a vector space over 
$\Q$. If we write $K\subseteq L$, we mean that $K$ is an $\L$-substructure of $L$, in particular 
$\UE(L)\cap K=\UE(K)$. 

\smallskip

We call $K$ a \emph{green field} if $\delta(k)=2\tr(k)-\ldim(\UE(k))\geq0$ for every $k\subseteq K$ of finite transcendence degree. Here $\delta(k)$ is called the \emph{predimension} 
of 
$(k,\UE(k))$. More generally, for $K\subseteq L$ such that $\tr(L/K)$ and $\ldim(\UE(L)/\UE(K))$ are 
finite, we put 
$$\delta(L/K)=2\tr(L/K)-\ldim(\UE(L)/\UE(K)).$$
An extension $K\subseteq L$ of green fields is called \emph{self-sufficient} if 
$\delta(L'/K)\geq 0$ for every green field $L'$ such that $K\subseteq L'\subseteq L$ and $\tr(L'/K)$ is finite; we write $K\leq L$ if this holds. If $A\subseteq L$ is any subset, there is a minimal (with respect to inclusion) green field $K'$ such that $A\subseteq K'\leq L$; it is called the \emph{self-sufficient closure of $A$ in $L$} and denoted by $\cl_{\omega}(A)=\cl_{\omega}^L(A)$. 
Note that this notion depends on $L$, but often we will omit the superscript if $L$ is clear from the 
context. If $A$ contains a $\Q$-basis of the green points of its self-sufficient closure in $L$, we also write 
$A\leq L$ (by a slight abuse of notation), and $A$ is called a \emph{self-sufficient subset} 
of $L$. Note that in this case $\cl_{\omega}^L(A)$ is given by $A^{alg}$, the algebraic closure of 
$A$ in the field sense.

\smallskip
If $\a$ is a finite tuple from $L$ and $B\subseteq L$, the \emph{dimension of $\a$ over $B$} is given by $\d(\a/B)=\d^L(\a/B)=\delta\left(\cl_{\omega}(B\a)/\cl_{\omega}(B)\right)$. Note that if 
$K\subseteq A\subseteq L$ for some $K\leq L$ with $\tr(A/K)<\infty$, then $\tr\left(\cl_{\omega}(A)/K\right)<\infty$ as well.

\smallskip
The following lemma is a direct consequence of the definitions. 

\begin{Lem}\label{L:RotundSelfsuff}
Let $K\subseteq L$ be an extension of green fields. Assume 
that $(g_1,\ldots,g_n)$ is a basis of $\UE(L)$ over $\UE(K)$. Then $K\leq L$ if and only 
if $\Loc(\overline{g}/K)$ is rotund.\qed
\end{Lem}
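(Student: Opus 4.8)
The plan is to unwind both implications to the single condition that $\delta(\Loc(\overline{g}^{M}/K))\geq 0$ for every $M\in\mathrm{Mat}(n\times n,\Z)$, using that every green point entering the argument is a rational power product of the coordinates of $\overline{g}$. Write $V=\Loc(\overline{g}/K)\subseteq\Gm^{n}$. First I would note that $V$ is free, independently of any self-sufficiency hypothesis: if $\overline{g}^{\overline{r}}\in K^{\times}$ for some $\overline{r}\in\Z^{n}$, then, each $g_{i}$ lying in the subgroup $\UE(L)$, we get $\overline{g}^{\overline{r}}\in\UE(L)\cap K=\UE(K)$, whence $\overline{r}=0$ because $(g_{1},\ldots,g_{n})$ is a $\Q$-basis of $\UE(L)/\UE(K)$; so the minimal torus of $V$ is $\Gm^{n}$. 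The same computation, applied to the coordinates of $\overline{g}^{M}$, shows that the minimal torus of $\Loc(\overline{g}^{M}/K)$ has dimension exactly $\operatorname{rank}(M)$, so that
\[
\delta\big(\Loc(\overline{g}^{M}/K)\big)=2\tr(\overline{g}^{M}/K)-\operatorname{rank}(M).
\]
Since rotundity of $V$ amounts to the (automatic) freeness of $V$ together with $\delta(\Loc(\overline{a}^{M}/K))\geq 0$ for all $M$ and all $K$-generic $\overline{a}$ in $V$, and $\Loc(\overline{a}^{M}/K)$ depends only on $\tp(\overline{a}/K)=\tp(\overline{g}/K)$, the lemma reduces to comparing ``$\delta(\Loc(\overline{g}^{M}/K))\geq 0$ for all $M$'' with ``$K\leq L$''.

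For the implication ``$K\leq L\Rightarrow V$ rotund'' I would fix $M$ and set $L'=K(\overline{g}^{M})^{alg}$ with $\UE(L')=\UE(L)\cap L'$. Then $L'$ is again a green field with $K\subseteq L'\subseteq L$ --- the one non-formal point being that $\UE(L')$ is divisible, which follows from $\UE(L)$ being divisible and $L'$ being algebraically closed --- and $\tr(L'/K)=\tr(\overline{g}^{M}/K)<\infty$, while $\ldim(\UE(L')/\UE(K))\geq\operatorname{rank}(M)$, since the coordinates of $\overline{g}^{M}$ already span an $\operatorname{rank}(M)$-dimensional $\Q$-subspace modulo $\UE(K)$. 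Hence $\delta(L'/K)\leq\delta(\Loc(\overline{g}^{M}/K))$ by the displayed identity, and $K\leq L$ forces $\delta(L'/K)\geq 0$; as $M$ was arbitrary, $V$ is rotund.

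For the converse I would take any green field $L'$ with $K\subseteq L'\subseteq L$ and $\tr(L'/K)<\infty$, and put $r=\ldim(\UE(L')/\UE(K))\leq n$. Since $\UE(L')$ is a $\Q$-subspace of $\UE(L)=\UE(K)\oplus\bigoplus_{i}\Q g_{i}$, a $\Q$-basis of $\UE(L')/\UE(K)$ may be taken of the form $h_{j}=\overline{g}^{\overline{m}_{j}}$, $j=1,\ldots,r$, with $\overline{m}_{1},\ldots,\overline{m}_{r}\in\Z^{n}$ $\Q$-linearly independent (write the basis members as rational power products of the $g_{i}$ and clear denominators, staying inside $\UE(L')$ by divisibility). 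Taking $M$ with rows $\overline{m}_{1},\ldots,\overline{m}_{r}$ followed by zeros, the minimal torus of $\Loc(\overline{g}^{M}/K)$ has dimension $r$, so rotundity and the displayed identity give $2\tr(h_{1},\ldots,h_{r}/K)-r\geq 0$. As $h_{1},\ldots,h_{r}\in L'$, this yields $\tr(L'/K)\geq r/2$, i.e.\ $\delta(L'/K)\geq 0$; since $L'$ was arbitrary, $K\leq L$.

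I expect the main (mild) obstacle to be the bookkeeping behind the displayed identity, namely that the dimension of the minimal torus of $\Loc(\overline{g}^{M}/K)$ equals $\operatorname{rank}(M)$: this is precisely where freeness of $V$ --- equivalently, the multiplicative independence of $\overline{g}$ over $K^{\times}$ coming from $\UE(L)\cap K=\UE(K)$ --- is used, and one must also allow $\ldim(\UE(L')/\UE(K))$ to strictly exceed $\operatorname{rank}(M)$, the relevant inequalities pointing the right way in each direction. No input from algebraic geometry (in particular, no weak CIT) enters, which is why the lemma is a direct consequence of the definitions.
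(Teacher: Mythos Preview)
Your argument is correct and is precisely the direct unwinding of the definitions that the paper intends: the paper gives no proof at all (the lemma carries a \qed and is introduced with ``direct consequence of the definitions''). Your identification of freeness with the multiplicative independence of $\overline g$ over $K^{\times}$ (via $\UE(L)\cap K=\UE(K)$), the computation $\ldim(\Loc(\overline g^{M}/K))=\operatorname{rank}(M)$, and the two reductions to intermediate green fields $L'=K(\overline g^{M})^{alg}$ (forward) and $L'$ arbitrary with $h_j=\overline g^{\overline m_j}$ (backward) are exactly the verifications one has to make.
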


Let $(\C,\leq)$ be the class of green fields with self-sufficient embeddings.

\begin{Fact}[\cite{Po01}]\label{F:PoizatGreen}
\begin{enumerate}
\item[(a)] The class $\C$ is elementary, and $(\C,\leq)$ has the amalgamation property (AP) and 
the joint embedding property (JEP). Moreover, up to $\L$-isomorphism, the subclass $\C^{fin}$ of green 
fields of finite transcendence degree is countable. 
\item [(b)] Let $K_{\omega}$ be the Fra\"{i}ss\'e-Hrushovski limit of $(\C^{fin},\leq)$. Then $K_{\omega}$ 
is a saturated model of its $\L$-theory $T_{\omega}$. \item[(c)] The algebraic closure in $T_{\omega}$ equals the self-sufficient closure.  
\item[(d)] Let $A,A'\subseteq K\models T_{\omega}$. Then 
$\tp_{T_{\omega}}(A)=\tp_{T_{\omega}}(A')\Leftrightarrow\cl_{\omega}(A)\simeq_{\L}\cl_{\omega}(A')$ (over the map sending $A$ 
to $A'$).
\item[(e)] The theory $T_{\omega}$ is $\omega$-stable of Morley rank $\omega\cdot2$, with $\RM(\UE)=\omega$. Moreover, $\RM(\a/A)<\omega\Leftrightarrow\d(\a/A)=0$ for all sets $A$ and finite tuples $\a$.
\item[(f)] Let $K\models T_{\omega}$. Then any non-zero element of $K$ may be written in the form $(a+b)\times(c+d)$ for some green elements $a,b,c,d$.
In particular $K$ is in the definable closure of $\UE(K)$.
\end{enumerate} 

\end{Fact}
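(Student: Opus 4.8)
This statement assembles Poizat's construction of the green field, so the plan is to verify (a)--(f) in turn, each part building on the previous ones; the geometry of Fact~\ref{T:Poizat} enters in (a) and again, more seriously, in the Morley rank computation of (e), which is the real work. For (a): a structure $(K,\UE)$ with $K\models ACF_0$ and $\UE\leq K^{\times}$ divisible torsion free is a green field iff $\delta(k)\geq0$ for every finite-transcendence-degree subfield $k\subseteq K$, and by the submodularity of $\delta$ this reduces to the self-sufficient-minimal $k$'s, which by Lemma~\ref{L:RotundSelfsuff} amounts to rotundity of the locus over the white part of a $\Q$-basis of green points. Since rotundity is a definable property of algebraic varieties (Fact~\ref{F:RotundDef}, via weak CIT), this yields a first-order axiom scheme whose models are exactly the green fields, so $\C$ is elementary. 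AP and JEP are the usual free amalgamation of predimension structures: realise $K_1,K_2$ algebraically disjoint over $K_0$, take $K_3=K_1\cdot K_2$ with $\UE(K_3)$ the divisible hull of $\UE(K_1)\UE(K_2)$ (torsion-freeness forced by the disjointness), and check $K_i\leq K_3$ and that $K_3$ is again green, the estimate that no intermediate field lowers the predimension following from submodularity together with Lemma~\ref{L:MaxDelta} to pin down the offending subvarieties; JEP is the case $K_0=$ the prime green field. Finally $\C^{fin}/{\simeq_{\L}}$ is countable because a green field of finite transcendence degree is coded by finitely much field-theoretic data plus the finite-dimensional $\Q$-space $\UE(k)$ sitting multiplicatively inside it.

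Given (a), the Hrushovski--Fra\"iss\'e machinery over $(\C^{fin},\leq)$ produces the generic model $K_{\omega}$, which is $\leq$-universal and $\leq$-homogeneous; realising all finitary types (each given by a self-sufficient extension) it is $\aleph_0$-saturated, hence, being countable, saturated, and we set $T_{\omega}:=\Th(K_{\omega})$, proving (b). For (c): if $A$ is self-sufficient then $\cl_{\omega}(A)=A^{alg}\subseteq\acl(A)$, while any $b\notin\cl_{\omega}(A)$ has infinitely many $A$-conjugates, obtained by amalgamating over $\cl_{\omega}(A)$ inside $K_{\omega}$ arbitrarily many copies of $\cl_{\omega}(Ab)$; applied to $\cl_{\omega}(A)$ this gives $\acl(A)=\cl_{\omega}(A)$ in general. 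Part (d) is then a back-and-forth along self-sufficient embeddings: by $\leq$-homogeneity an $\L$-isomorphism $\cl_{\omega}(A)\simeq_{\L}\cl_{\omega}(A')$ over $A\mapsto A'$ extends to an automorphism of $K_{\omega}$, the converse implication being trivial, and together with (c) this is the stated type criterion.

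For (e), $\omega$-stability is a count: for countable $A$, (c) and (d) make a type over $A$ correspond to an $\L$-isomorphism type over $A$ of some $\cl_{\omega}(A\overline{a})$, of which there are only countably many. For the rank I would attach to $\delta$ the natural ordinal-valued rank --- morally $\omega\cdot\d(\overline{a}/A)$ plus the field-theoretic Morley rank of the residual $\d=0$ part --- and prove, by the standard Lascar-style induction, using richness to realise the required generic points and Fact~\ref{T:Poizat} for the necessary uniformity of loci along families, that it coincides with $\RM$; the built-in identification of the $\omega$-coefficient of $\RM$ with $\d$ is exactly the asserted equivalence $\RM(\overline{a}/A)<\omega\Leftrightarrow\d(\overline{a}/A)=0$. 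This induction is the main obstacle. The extreme values then drop out: a generic green point $g$ has $\cl_{\omega}(g)=\Q(g)^{alg}$ with green part $\langle g\rangle$, so $\d(g)=2\cdot1-1=1$ and $\RM(g)=\omega$, whence $\RM(\UE)=\omega$; a generic white point $x$ has $\cl_{\omega}(x)=\Q(x)^{alg}$ with no green point, so $\d(x)=2$ and $\RM(x)=\omega\cdot2$, while $\cl_{\omega}(x)\subseteq\overline{\Q(x)}$ forces $\d(x)\leq2$, hence $\RM(T_{\omega})=\omega\cdot2$.

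For (f), since $T_{\omega}$ is complete and $K_{\omega}$ saturated it suffices to argue in $K_{\omega}$. Fix $x\neq0$, put $k=\cl_{\omega}(x)\leq K_{\omega}$, and let $\overline{g}=(a,b,c,d)$ be generic over $k$ in the irreducible $3$-dimensional variety
$$V=\{(a,b,c,d)\in\Gm^4\,:\,(a+b)(c+d)=x\}.$$
Then $\tr(\overline{g}/k)=3$ and, $V$ being free, $\overline{g}$ is multiplicatively independent over $k$, so declaring $\overline{g}$ green gives $\delta(k(\overline{g})^{alg}/k)=2\cdot3-4=2\geq0$; a predimension computation --- using Fact~\ref{T:Poizat} to bound the minimal tori that arise --- shows moreover that $V$ is rotund, so by Lemma~\ref{L:RotundSelfsuff} the green field $k(\overline{g})^{alg}$ (with $\overline{g}$ green) is a self-sufficient extension of $k$ lying in $\C^{fin}$. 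By $\leq$-homogeneity of $K_{\omega}$ the inclusion $k\leq K_{\omega}$ extends to an embedding of $k(\overline{g})^{alg}$ over $k$, producing green $a,b,c,d\in K_{\omega}$ with $x=(a+b)(c+d)$. Since $0$ and $1$ are $\emptyset$-definable and every other element is now a ring term in green elements, $K=\dcl(\UE(K))$ for all $K\models T_{\omega}$.
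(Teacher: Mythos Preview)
The paper does not supply a proof of this statement: it is recorded as a \emph{Fact} with citation to \cite{Po01}, and the text moves on immediately to a remark. There is thus no in-paper argument to compare against; your proposal is in effect a sketch of Poizat's original construction together with the standard Hrushovski--Fra\"iss\'e machinery, not of anything proved in the present paper.

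As a sketch it is largely on target, with two points worth flagging. First, your argument for elementarity of $\C$ in~(a) is slightly muddled: applying Lemma~\ref{L:RotundSelfsuff} with base $\Q^{alg}$ does give the right reformulation, but the passage from ``rotundity is definable in families'' (Fact~\ref{F:RotundDef}) to a first-order scheme for $\C$ is not immediate, since $\Loc(\overline{g}/\Q^{alg})$ is not uniformly definable from~$\overline{g}$. What one actually writes is a scheme indexed by the (countably many) $\Q^{alg}$-irreducible varieties, and weak CIT is then used to make each individual axiom first-order by reducing the relevant linear-dimension conditions to finitely many tori. Second, in~(f) the property of $K_\omega$ you need to realise the extension $k\leq k(\overline{g})^{alg}$ over $k$ is \emph{richness} (i.e.\ $\leq$-universality over $\leq$-embedded substructures) rather than $\leq$-homogeneity; and the rotundity of $V=\{(a,b,c,d):(a+b)(c+d)=x\}$ over $k$, while true, deserves at least a line checking that all proper multiplicative projections keep $2\dim\geq\ldim$. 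Your acknowledgement that the Morley-rank computation in~(e) is ``the main obstacle'' and only outlined is entirely appropriate: that is where the substantial work of \cite{Po01} lies.
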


\begin{Rem}
Assuming Schanuel's Conjecture, Zilber shows in \cite{Zi04} that there is a natural model of $T_{\omega}$, 
namely the structure $(\mathbb{C},+,\times,\UE)$, where the set of green points is given by $\UE:=\{exp\left( t(1+i) + q \right) \,\mid\, t \in \R, \, q \in \Q\}$.
\end{Rem}


The construction of Poizat provides a ``bad field of infinite rank" in characteristic 0. It is possible to collapse the theory $T_{\omega}$ to obtain a \emph{bad field}, i.e.\ a field of finite Morley rank with 
a definable infinite proper subgroup of the multiplicative group of the field. In \cite{BHMW07}, Baudisch, Martin-Pizarro, Wagner and the author construct an elementary 
subclass $\Cmu\subseteq\C$ such that $(\Cmu,\leq)$ has (AP) and (JEP). Let $K_{\mu}$ be the 
corresponding Fra\"{i}ss\'e-Hrushovski limit, and $T_{\mu}$ its $\L$-theory.

\begin{Fact}[\cite{BHMW07}]\label{F:BadField}
\begin{enumerate}
\item[(a)] $K_{\mu}$ is saturated.
\item[(b)] Let $A,A'\subseteq K\models T_{\mu}$. Then 
$\tp_{T_{\mu}}(A)=\tp_{T_{\mu}}(A')\Leftrightarrow\cl_{\omega}(A)\simeq_{\L}\cl_{\omega}(A')$ (over the map sending $A$ to $A'$).
\item[(c)] The theory $T_{\mu}$ is of Morley rank 2, and $\UE$ is strongly minimal.
\item[(d)] For all $A\subseteq K\models T_{\mu}$ and $\a\in K$ one has $\d(\a/A)=\RM(\a/A)$. 
In particular, $\acl_{\mu}(A)=\{a\in K\,\mid\,\d(a/A)=0\}$, where $\acl_{\mu}$ denotes the 
algebraic closure in $T_{\mu}$.  
\item[(e)] Let $K\models T_{\mu}$. Then any non-zero element of $K$ may be written in the form $(a+b)\times(c+d)$ for some green elements 
$a,b,c,d$. In particular $K$ is in the definable closure of $\UE(K)$, so $T_{\mu}$ is almost strongly minimal.
\item[(f)] $T_{\mu}$ is model-complete. 

\end{enumerate}
\end{Fact}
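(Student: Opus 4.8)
The plan is to show that every embedding $K \subseteq L$ between models of $T_{\mu}$ is elementary; equivalently, that every $L$-formula is equivalent modulo $T_{\mu}$ to an existential one, or — most conveniently here — that $T_{\mu}$ has the property that for $K, L \models T_{\mu}$ with $K \subseteq L$, one has $K \preccurlyeq L$. First I would recall the two facts that do the real work: part (b), which says that $\L$-types in $T_{\mu}$ are determined by the $\L$-isomorphism type of the self-sufficient closure $\cl_{\omega}(\cdot)$; and part (d), which identifies $\acl_{\mu}(A)$ with $\{a : \d(a/A) = 0\}$ and says $\d = \RM$. The strategy is the standard one for Hrushovski-amalgamation limits: reduce model-completeness to the statement that if $K \subseteq L$ are models of $T_{\mu}$ then $K \leq L$ (i.e.\ the embedding is self-sufficient), because self-sufficient embeddings between saturated-enough models of the limit theory are elementary by the type-description in (b).

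The key steps, in order. \textbf{Step 1.} Show that if $K \subseteq L$ are both models of $T_{\mu}$, then $K \leq L$, i.e.\ $\delta(L'/K) \geq 0$ for every intermediate green field $L'$ with $\tr(L'/K) < \infty$. This is where one uses that $K$ is a model: since $\acl_{\mu}(K) = K$ and $\acl_{\mu}(A) = \{a : \d(a/A)=0\}$, for any finite tuple $\a \in L$ that is not algebraic over $K$ in the field sense the dimension $\d(\a/K)$, computed inside $L$, must be positive — but $\d(\a/K) = \delta(\cl_{\omega}^L(K\a)/K) \leq \delta(L'/K)$ for the relevant $L'$; more carefully one argues that a negative $\delta(L'/K)$ would produce an element of $\cl_{\omega}^L(K) \setminus K$ contradicting $K = \acl_{\mu}(K) \supseteq \cl_{\omega}^L(K) \cap K$-closure being all of $K$. (One must be slightly careful: $\cl_{\omega}^L(K)$ need not be inside $K$ a priori, but any element it contributes would be $\d$-dimension-0 over $K$, hence in $\acl_{\mu}(K) = K$.) \textbf{Step 2.} Given $K \leq L$ with both modelling $T_{\mu}$, and a finite tuple $\a \in L$, show $\tp_{T_{\mu}}(\a/K) = \tp_{T_{\mu}^{L}}(\a/K)$. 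Because $K \leq L$, the self-sufficient closure of $K\a$ computed in $L$ agrees (as an $\L$-structure over $K$) with an intrinsic construction depending only on $\qftp_{\L}(\a/K)$; then realize the same $\L$-isomorphism type over $K$ inside the monster model of $T_{\mu}$, and apply Fact \ref{F:BadField}(b) to conclude the $T_{\mu}$-types over $K$ coincide. \textbf{Step 3.} Conclude: an embedding under which all types over the smaller model are preserved is elementary, so $K \preccurlyeq L$, hence $T_{\mu}$ is model-complete.

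The main obstacle I expect is \textbf{Step 1}: proving that an $\L$-embedding $K \subseteq L$ of models of $T_{\mu}$ is automatically self-sufficient. The subtlety is that $K \subseteq L$ as $\L$-structures only guarantees $\UE(L) \cap K = \UE(K)$ and field-theoretic containment, not any predimension inequality; one genuinely needs to invoke that $K$ is existentially closed / a model of the limit theory (via part (d)) to rule out a ``thin'' subextension $L'$ with $\delta(L'/K) < 0$. A clean way to handle this is: such an $L'$ would give a finite tuple $\b \in L'$ with $\d^L(\b/K) < \tr(\b/K)$, so some coordinate combination lies in the self-sufficient closure of $K$ in $L$ but not in $K$; that element has $\d$-dimension $0$ over $K$, hence lies in $\acl_{\mu}^L(K)$, and since $K \models T_{\mu}$ we have $\acl_{\mu}^L(K) \cap L$-closure equal to $K$ (algebraic closure of a model is the model), a contradiction. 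One should double-check that $\acl_{\mu}$ computed in $L$ restricted to $K$-definable algebraicity agrees with $\acl_{\mu}$ in $K$ — this is exactly the content one gets from (d) together with the fact that $\d$ is absolute once the ambient self-sufficient closures are understood. Everything else is the routine back-and-forth packaging standard for these constructions.
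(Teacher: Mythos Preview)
The paper does not prove this Fact; it is quoted from \cite{BHMW07}, where the construction of $T_\mu$ is carried out. So there is no in-paper argument to compare against, and I will just assess your plan on its own.

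Your outline for (f) has the right shape, and Steps~2--3 are fine once Step~1 is in hand. The problem is Step~1. You argue: if $a\in\cl_\omega^L(K)\setminus K$ then $\d^L(a/K)=0$, hence by (d) $\RM(\tp^L(a/K))=0$, hence $a\in\acl_\mu^L(K)$, hence $a\in K$ ``since $K$ is a model''. The last implication is precisely what is in question. That $K\models T_\mu$ gives only $\acl_\mu^K(K)=K$; it does \emph{not} give $\acl_\mu^L(K)\subseteq K$ unless you already know $K\elex L$, which is the conclusion you are after. Your closing remark (``$\d$ is absolute once the ambient self-sufficient closures are understood'') names the difficulty without resolving it: $\d^K$ and $\d^L$ agree on tuples over $K$ exactly when $K\leq L$, and that is the claim to be proved. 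So the argument is circular as written.

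The non-circular route --- and the one actually used in \cite{BHMW07} --- goes through the collapse axioms rather than through (d). The theory $T_\mu$ is axiomatised so that for every code $\alpha$ and every instance $\phi_\alpha(\x,\b)$, the number of generic (i.e.\ self-sufficient, prealgebraic) realisations is exactly $\mu(\alpha)$. If $K\not\leq L$, a minimal witness yields a green tuple $\overline g\in L\setminus K$ which is a generic realisation of some code instance $\phi_\alpha(\x,\b)$ with $\b\in K$. Since $K\models T_\mu$, all $\mu(\alpha)$ generic realisations already lie in $K$; since $L\models T_\mu$, there are no further ones in $L$ --- contradicting $\overline g\notin K^n$. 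Equivalently, one reads model-completeness off an explicit $\forall\exists$ axiomatisation. Either way, Step~1 is established independently of (b) and (d); indeed, in \cite{BHMW07} model-completeness is an input to the proofs of (b) and (d) for arbitrary models, not a consequence of them.
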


The following result will thus apply to the theory $T_{\mu}$.

 \begin{Fact}[\cite{Wa01}] \label{F:Bad_IE}Let $K$ be a field of finite Morley rank (in some expansion $\L$ of the language of rings).
 \begin{enumerate}
\item[(a)] Any algebraically closed subset of $K$ is an elementary substructure. 
\item[(b)] The theory $\Th_{\L}(K)$ eliminates imaginaries.
\end{enumerate}
 \end{Fact}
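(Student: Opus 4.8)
The plan is to reduce both parts to a single statement about definable subsets of $K$ lying in the home sort; this is in essence Wagner's argument \cite{Wa01}. Write $\acl(B):=\acl^{eq}(B)\cap K$ for $B\subseteq K^{eq}$, and recall that, since a field of finite Morley rank is $\omega$-stable and hence algebraically closed as a pure field (Macintyre), $\acl(B)$ is always an algebraically closed subfield of $K$. The crux is the claim
\[(\star)\qquad\text{every non-empty } D\subseteq K^{n}\text{ definable over } B\subseteq K^{eq}\text{ meets } \acl(B)^{n}.\]
Granting $(\star)$, part (a) is immediate from the Tarski--Vaught test: if $A=\acl(A)\subseteq K$ and $\phi(x,\b)$ is an $\L$-formula with $\b\in A$ that has a solution in $K$, then $D:=\phi(K,\b)$ is non-empty and definable over $\b$, so by $(\star)$ it meets $\acl(\b)\subseteq A$. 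Part (b): $(\star)$ yields weak elimination of imaginaries, for if $e=\a/E$ is an imaginary (with $\a\in K^{m}$ and $E$ a $\emptyset$-definable equivalence relation), then $C:=\{\x\in K^{m}\mid\x/E=e\}$ is non-empty and definable over $\{e\}$, so by $(\star)$ it contains a tuple $\overline{d}\in\acl(e)$, and then $e=\overline{d}/E\in\dcl^{eq}(\overline{d})$ while $\overline{d}\in\acl^{eq}(e)$; as $K$ is a field it also eliminates finite imaginaries (finite subsets of $K^{m}$ being coded by elementary symmetric functions), and weak elimination of imaginaries together with elimination of finite imaginaries gives full elimination of imaginaries.

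Thus everything reduces to $(\star)$. I would first peel off the dimension: the projection $\pi_{1}(D)\subseteq K$ is non-empty and definable over $B$, so by the case $n=1$ it contains a point $d_{1}\in\acl(B)$; the fibre of $D$ over $d_{1}$ is then non-empty, definable over $B\cup\{d_{1}\}$, and $\acl(B\cup\{d_{1}\})=\acl(B)$, so one iterates. For $n=1$ with $D$ finite the claim is trivial, and the content is the assertion that \emph{an infinite $D\subseteq K$ definable over $B$ contains an element of $\acl(B)$.} Here the structure theory of fields of finite Morley rank is needed. Writing $F=\acl(B)$, one argues by contradiction from $D\cap F=\emptyset$, that is, from the hypothesis that every element of $D$ is field-transcendental over the algebraically closed subfield $F$; the tools are the connectedness of $K$, of its additive group and of its multiplicative group, the fact that a field of finite Morley rank has no proper infinite definable subfield, and Zilber's indecomposability theorem applied to the additive and multiplicative subgroups of $K$ and to the subring generated by $D$ — combining these, such a $D$ would force either a forbidden definable subgroup or a proper infinite definable subfield, contradicting the finiteness of Morley rank.

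The main obstacle is precisely this last step, the case $n=1$ with $D$ infinite: it carries essentially the whole content of the theorem and cannot be obtained by soft model theory, as it genuinely uses the structure theory of groups and fields of finite Morley rank. (For the concrete theory $T_{\mu}$ of the bad field one could instead exploit its special features — model-completeness and almost strong minimality, Fact \ref{F:BadField}(e),(f), together with $\acl_{\mu}(A)=\{a\mid\d(a/A)=0\}$ — to reduce $(\star)$ to the strongly minimal predicate $\UE$, where algebraic closure is easy to control, and to the generic case, where one uses that $\overline{\Q}$ is contained in no definable set of smaller Morley rank; but the statement as phrased, for an arbitrary field of finite Morley rank, is best proved through $(\star)$ above.)
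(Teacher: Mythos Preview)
In the compiled paper this Fact is simply cited from \cite{Wa01} without proof. The \LaTeX\ source does contain, after \texttt{\textbackslash end\{document\}}, a discarded proof sketch --- but that argument is specific to $T_\mu$, not to arbitrary fields of finite Morley rank: it shows directly that $\UE$ contains infinitely many algebraic points (via a green solution of $x+y=1$, which has $\d=0$), then uses that $\UE$ is strongly minimal and $K=\dcl(\UE(K))$ to get (a), and for (b) invokes the Lascar--Pillay observation on weak elimination of imaginaries for strongly minimal sets with infinitely many algebraic elements, together with the field structure. This is close in spirit to the alternative you yourself outline in your closing parenthetical.

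Your main argument, by contrast, follows Wagner's original route for an arbitrary field of finite Morley rank, and is a correct outline. The reduction of both (a) and (b) to the single claim $(\star)$ is right: Tarski--Vaught gives (a), and the passage from $(\star)$ to weak EI, combined with coding of finite sets in a field, gives (b). The inductive reduction of $(\star)$ to $n=1$ by projecting and taking fibres is fine. You are also honest that the infinite $n=1$ case is where all the content lies, and your list of ingredients (connectedness of $K^+$ and $K^\times$, no proper infinite definable subfield, Zilber indecomposability) is the right one. One quibble of phrasing: the eventual contradiction is not literally with ``finiteness of Morley rank'' but with its structural consequences --- connectedness and the absence of proper definable subfields --- so your last clause slightly misplaces where the argument bites. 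Compared with the paper's discarded $T_\mu$-specific sketch, your approach buys generality at the cost of importing Wagner's structure theory; the discarded sketch is more self-contained for $T_\mu$ but does not address the Fact as stated.
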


We mention another fact which will be needed later on. It is a direct consequence of \cite[Lemma 10.3(2)]{BHMW07}.

\begin{Fact}\label{F:DimTypeDef}
Work in $T_{\omega}$ or in $T_{\mu}$. For any $d\geq0$ and any variable tuples $\x$ and $\z$ there is a partial type $\pi_d(\x,\z)$ such that for any $\a$ and $\b$ one has $\models\pi_d(\a,\b)$ if and only if $\d(\a/\b)\geq d$.
\end{Fact}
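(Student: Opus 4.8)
The plan is to reduce the assertion to the cited statement \cite[Lemma 10.3(2)]{BHMW07}, which (in the notation of the present paper) should say that for a fixed ambient configuration the value of $\d(\a/\b)$ is controlled by the algebraic type of $(\a,\b)$ together with the pattern of linear dependences among the green points involved. The key point is that $\d(\a/\b)\geq d$ is a \emph{closed} condition in a suitable sense: it fails precisely when there is a witnessing tuple $\c$ with $\b\c$ a green-point configuration of small predimension over which $\a$ sits. So I would first recall that $\d(\a/\b)=\delta(\cl_{\omega}(\b\a)/\cl_{\omega}(\b))$ and unwind this as an infimum over the finitely-generated green subfields lying between $\cl_\omega(\b)$ and $\cl_\omega(\b\a)$: indeed $\d(\a/\b)\geq d$ iff $\delta(L'/\cl_\omega(\b))\geq d$ for every green field $L'$ with $\cl_\omega(\b)\subseteq L'\subseteq\cl_\omega(\b\a)$ and $\tr(L'/\cl_\omega(\b))<\infty$, and since $\delta$ is finite-to-one data this can be phrased using the $\delta$ of loci of finite tuples approximating $\cl_\omega(\b\a)$ over $\cl_\omega(\b)$.

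Next I would package this into a partial type. For each $d$, and each way of adjoining finitely many new elements $\u$ (playing the role of a potential self-sufficient closure), the formula ``$\u$ is algebraic over $\b\a$ of transcendence degree $t$ and the green points among $\x\a\u$ span a $\Q$-space of dimension $\ell$ with $2t'-\ell'<d$ for the relevant intermediate $t',\ell'$'' is first order (using that the green predicate is definable and that transcendence degree and $\Q$-linear dimension of green points are type-definable — the latter by Fact \ref{F:PoizatGreen}(e) / Fact \ref{F:BadField}(d) and the standard coordinatisation). The partial type $\pi_d(\x,\z)$ is then the conjunction, over all such finite patterns, of the negations of these ``predimension-dropping'' configurations, i.e.\ it asserts that no finite extension of $\b\a$ inside a model witnesses $\d(\a/\b)<d$. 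By the characterisation of $\acl=\cl_\omega$ in Fact \ref{F:PoizatGreen}(c) (resp.\ Fact \ref{F:BadField}(d)) and compactness, $\models\pi_d(\a,\b)$ holds exactly when $\d(\a/\b)\geq d$, because any failure of $\d(\a/\b)\geq d$ is witnessed by finitely many elements of $\cl_\omega(\b\a)$, hence by a single finite pattern that $\pi_d$ forbids.

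The main obstacle, and the only place where real work is needed, is the \emph{uniformity} — that the quantification ``over every intermediate green field $L'$'' really is captured by a \emph{fixed} (possibly infinite) collection of first-order conditions on $(\x,\z)$ rather than requiring unbounded new parameters in an essential way. This is exactly what weak CIT (Fact \ref{T:Poizat}) and the attendant definability results (Fact \ref{F:RotundDef}, Lemma \ref{L:MaxDelta}) buy us: the tori that can arise as minimal tori of $\cd$-maximal subvarieties of a given family are drawn from a \emph{finite} list, so the relevant predimension drops occur only along finitely many ``shapes'' once the algebraic family of loci of $(\x,\z)$-configurations is fixed, and each shape contributes one first-order clause. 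Granting \cite[Lemma 10.3(2)]{BHMW07} — which is precisely the technical statement that $\d$ is determined by this bounded amount of data — the construction of $\pi_d$ and the verification of its defining property are then routine, and I would present them as such.
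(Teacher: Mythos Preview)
Your approach coincides with the paper's: both simply invoke \cite[Lemma 10.3(2)]{BHMW07}. The paper's entire proof is the sentence ``It is a direct consequence of \cite[Lemma 10.3(2)]{BHMW07}'' --- no further unpacking is given.

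Your additional commentary (that $\d(\a/\b)<d$ is witnessed by a finite predimension-dropping configuration, so $\d\geq d$ is the conjunction of the negations of all such finite patterns) is broadly correct in spirit and goes beyond what the paper supplies. Two small caveats. First, you speak of adjoining $\u$ ``algebraic over $\b\a$'', but algebraicity in $T_\omega$ is $\cl_\omega$, which is not itself first-order; the correct formulation quantifies over \emph{arbitrary} finite green tuples $\u$ in the model and records the field-theoretic transcendence degree and $\Q$-linear dimension data directly --- the minimum of $\delta$ over such extensions is what computes $\d$. Second, the role you assign to weak CIT is slightly overstated for this particular fact: the type-definability of $\d\geq d$ is more elementary than, say, the definability of rotundity. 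Each finite witnessing pattern already gives a single formula (transcendence degree and linear dimension of a fixed-length tuple are definable in ACF and in the divisible group $\UE$ respectively), and $\pi_d$ is just the infinite conjunction over all pattern lengths. No finiteness of tori is needed to write down this partial type; weak CIT enters elsewhere in the paper, for sharper \emph{definability} (as opposed to type-definability) statements.
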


 We finish this section with an example showing that in both $T_{\omega}$ and $T_{\mu}$, we cannot 
 infer from the characterisations of types in Fact \ref{F:PoizatGreen}(d) and Fact \ref{F:BadField}(b), respectively, that two self-sufficient green tuples having the same field type (over an algebraically closed and self-sufficient base) must have the same type. The problem is that one has to \emph{choose 
 green roots}.
 
 \begin{Ex}\label{Ex:ChoosingRoots}
 Let $L$ be a model of $T_{\omega}$ or $T_{\mu}$, $K=\Q^{alg}\subseteq L$, and let $\a=(a_1,a_2,a_3),\a'=(a'_1,a'_2,a'_3)\in L$ be green tuples. Put $A=K(\a)^{alg}$, $A'=K(\a')^{alg}$. 
 Suppose that $A$ and $A'$ 
 are self-sufficient in $L$, and that $\a$ is a $\Q$-basis of $\UE(A)$ over $K$, similarly for $\a'$ and $\UE(A')$ . Suppose that both 
 $\a$ and $\a'$ are generic in the variety $V$ given by the equation $X=(Y+Z)^2$. Note that exactly 
 one of the two square roots of $a_1$ (and of $a_1'$) is green. Suppose that $a_2+a_3$ and 
 $-a_2'-a'_3$ are green. Then $\a$ and $\a'$ do not have the same type over $K$ (not even over $\emptyset$).
 \end{Ex}

\section{A definability result for algebraic varieties}\label{S:def}\newcounter{Var_enum}
In this section we prove a definability result for varieties in characteristic 0 which will allow us 
to deal with uniformity issues around multiplicity in green fields: it is the major ingredient to show that 
the bad fields constructed in \cite{BHMW07} have the DMP and that the green fields of Poizat admit 
a geometric notion of genericity.

\begin{Defi}
\begin{enumerate} 
\item Let $L/K$ be a field extension with $K\models \mathrm{ACF}_0$, and let $l\geq2$ be an integer. A tuple $\a=(a_1,\ldots,a_n)$ from 
$L^{\times}$ is called \emph{$l$-Kummer generic over $K$} if $\Gal\left(K(\sqrt[l]{a_1},\ldots,\sqrt[l]{a_n})/K(\a)\right)\simeq(\Z/l\Z)^n$. 

The tuple $\a$ is called \emph{Kummer generic over $K$} if it is $l$-Kummer generic over $K$ for every $l\geq2$.
\smallskip

\item Let $V\subseteq \Gm^{n}$ be an irreducible closed subvariety of the standard 
torus $\Gm^{n}$, $V$ defined over $K\models\mathrm{ACF}_0$. The variety $V$ is called \emph{$l$-Kummer generic} (\emph{Kummer generic}, resp.) if every tuple $\a=(a_1,\ldots,a_n)$ which is generic in $V$ over 
$K$ is $l$-Kummer generic (Kummer generic, resp.) over $K$.
\end{enumerate}
\end{Defi}

The notion of a Kummer generic tuple is taken from \cite{Zi06}, although Zilber calls such a tuple 
\emph{simple}. Note that the definition of a Kummer generic variety does not depend on the choice of the algebraically closed 
field $K$.

\smallskip

Let $A$ be an abelian group, $B$ be a subgroup of $A$ and $l\geq 2$ a natural number. Recall that $B$ is an \emph{$l$-pure} 
subgroup of $A$ if whenever the equation $lx=b$ has a solution in $A$, where $b\in B$, then it has already a solution in $B$. If $B$ is $l$-pure in $A$ for every $l$, it is called a pure 
subgroup. Note that if $\mathrm{Tor}(A)\subseteq B$, then $B$ is $l$-pure in $A$ if and only if $A/B$ has trivial $l$-torsion.

\smallskip

For a field extension $L/K$ and $X\subseteq L^{\times}$ we denote $K^{\times}\langle X\rangle$ the 
subgroup of $L^{\times}$ generated by $K^{\times}\cup X$. Let $M$ be an algebraically closed field and 
$\Gamma$ a subgroup of the multiplicative group $M^\times$ of $M$. Then the divisible hull of $\Gamma$ (i.e.\ the set of elements $m\in M^{\times}$ such that 
$m^n\in\Gamma$ for some $n\geq1$) is denoted by $\divh(\Gamma)$.

\begin{Fact}\label{F:GaloisKummer}
Let $K$ be an algebraically closed field of characteristic 0 and $V\subseteq\Gm^n$ be a closed irreducible subvariety defined over $K$.
\begin{enumerate}
\item Let $L/K$ be a field extension, $\a$ an $n$-tuple from $L^{\times}$ and $l\geq 2$ an integer. The following are equivalent:

\begin{enumerate}
\item $\a$ is $l$-Kummer generic over $K$.
\item $\a$ is multiplicatively independent over $K^{\times}$ and $\left(L_0^{\times}\right)^l\cap \langle a_1,\ldots,a_n\rangle=\langle a_1^l,\ldots,a_n^l\rangle$, where 
$L_0=K(\a)$ and $\left(L_0^{\times}\right)^l=\{b^l\,\mid\, b\in L_0^{\times}\}$. 
\item  The elements $a_1/K^{\times},\ldots,a_n/K^{\times}$ generate an $l$-pure subgroup of rank $n$ inside 
the group $K(\a)^{\times}/K^{\times}$.
\item If $\alpha_i$ is an $l$-th root of $a_i$, then $\tp_{\mathrm{ACF}_0}(\alpha_1,\ldots,\alpha_n/K\a)$ is determined 
by $\{x_i^l=a_i\}_{1\leq i\leq n}$.
\end{enumerate}

\medskip

\item The following are equivalent:

\begin{enumerate}
\item $V$ is $l$-Kummer generic.
\item The variety $\sqrt[l]{V}\subseteq\Gm^n$ given by ``$(X_1^l,\ldots,X_n^l)\in V$" is irreducible.
\end{enumerate}

\medskip

\item Let $\a$ be generic in $V$ over $K$. Then, the following are equivalent:

\begin{enumerate}
\item $V$ is Kummer generic.
\item Any group automorphism of $\divh(K^{\times}\langle\a\rangle)$ fixing $K^{\times}\langle\a\rangle$ pointwise 
lifts to a field automorphism of $K(\a)^{alg}$, i.e.\ the natural map (given by restriction) $\Gal\left(K(\a)\right)\rightarrow \Aut_{gp}\left(\divh(K^{\times}\langle\a\rangle)/K^{\times}\langle\a\rangle\right)$
is surjective.
\item $V$ is $p$-Kummer generic for every prime number $p$.
\item The elements $a_1/K^{\times},\ldots,a_n/K^{\times}$ generate a pure subgroup of rank $n$ of $K(\a)^{\times}/K^{\times}$.
\end{enumerate}
\end{enumerate}
\end{Fact}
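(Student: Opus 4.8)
The plan is to establish part (1) first, since it is essentially pure Kummer theory applied to the field $L_0 = K(\a)$, and then leverage it to obtain parts (2) and (3). For (1), I would argue the cycle (a) $\Rightarrow$ (b) $\Rightarrow$ (c) $\Rightarrow$ (d) $\Rightarrow$ (a). Since $K$ is algebraically closed of characteristic $0$ it contains all $l$-th roots of unity, so classical Kummer theory gives a perfect pairing between $\Gal(K(\a)(\sqrt[l]{a_1},\dots,\sqrt[l]{a_n})/K(\a))$ and the subgroup of $L_0^\times/(L_0^\times)^l$ generated by the images of $a_1,\dots,a_n$. The Galois group is therefore isomorphic to $(\Z/l\Z)^n$ precisely when these images are $\Z/l\Z$-linearly independent in $L_0^\times/(L_0^\times)^l$, which unwinds to the condition in (b): multiplicative independence of $\a$ over $K^\times$ (no relation $\prod a_i^{k_i}\in K^\times$ with not all $k_i$ divisible by $l$ that would already come from $K$) together with $(L_0^\times)^l\cap\langle a_1,\dots,a_n\rangle=\langle a_1^l,\dots,a_n^l\rangle$. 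The equivalence (b) $\Leftrightarrow$ (c) is a translation into the language of the quotient group $K(\a)^\times/K^\times$: saying the images of the $a_i$ generate a subgroup of rank $n$ is multiplicative independence over $K^\times$, and $l$-purity of that subgroup is exactly the intersection condition, using the remark recorded before the Fact that, since torsion lies in $K^\times$, $l$-purity is equivalent to triviality of $l$-torsion in the ambient quotient modulo the subgroup. Finally (c) $\Leftrightarrow$ (d) is the model-theoretic reformulation: the type of a tuple of $l$-th roots $(\alpha_1,\dots,\alpha_n)$ over $K\a$ is pinned down by the equations $x_i^l=a_i$ iff the only constraints are the obvious ones, i.e.\ iff the Galois action on the roots is as large as possible, which by the pairing is again the rank/purity condition; here one uses that in $\mathrm{ACF}_0$ a type over an algebraically closed base is determined by the action of the Galois group of the base.

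For part (2), fix $\a$ generic in $V$ over $K$ and $\alpha_i$ an $l$-th root of $a_i$; then $(\alpha_1,\dots,\alpha_n)$ is a point of $\sqrt[l]{V}$, and every generic point of $\sqrt[l]{V}$ over $K$ arises this way up to the action of $\mu_l^n$. The variety $\sqrt[l]{V}$ is irreducible over $K$ iff $\Gal(K)$ acts transitively on the generic points of $\sqrt[l]{V}$ lying over a fixed generic $\a$ of $V$, i.e.\ iff $\Gal(K(\a))$ surjects onto $\mu_l^n$ acting by coordinatewise multiplication — and that surjectivity is precisely statement (1)(a) for this $l$ together with genericity, so (a) $\Leftrightarrow$ (b). (One should be slightly careful that irreducibility of $\sqrt[l]{V}$ over $K$ and over $\overline K = K$ coincide since $K$ is already algebraically closed, and that $\sqrt[l]{V}$ is pure-dimensional, so irreducibility is the same as the fibre of $\sqrt[l]{V}\to V$ over the generic point being a single Galois orbit.)

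For part (3), the implications (a) $\Rightarrow$ (c) $\Rightarrow$ ... are mostly bookkeeping once one knows that $l$-Kummer genericity for all $l$ reduces to the case of prime $l$: if $\a$ is $p$-Kummer generic for every prime $p$, then writing $l=\prod p_i^{e_i}$ one builds up $l$-purity from $p$-purity by an inductive argument on the exponents $e_i$, using again that torsion sits inside $K^\times$ (this is a standard fact about pure subgroups: a subgroup containing the torsion is pure iff it is $p$-pure for every prime $p$). This gives (a) $\Leftrightarrow$ (c) and, via part (1)(c), (a) $\Leftrightarrow$ (d). For (b), note $\divh(K^\times\langle\a\rangle)$ is the full preimage in $K(\a)^\times$ (inside an algebraic closure $K(\a)^{alg}$) of all roots of elements of $K^\times\langle\a\rangle$, and $\Aut_{gp}(\divh(K^\times\langle\a\rangle)/K^\times\langle\a\rangle)$ is the inverse limit over $l$ of the groups $\Hom(\langle\a\rangle K^\times/K^\times, \mu_l)$ by the same Kummer pairing. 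The restriction map $\Gal(K(\a))\to\Aut_{gp}(\dots)$ is surjective iff at every finite level the map $\Gal(K(\a))\to\mu_l^n$ is surjective, which is $l$-Kummer genericity of $V$; taking the inverse limit and invoking that surjectivity of a map of profinite groups can be checked at each finite quotient (the system of $\mu_l$'s is surjective, so no $\lim^1$ obstruction) gives (b) $\Leftrightarrow$ (a). I expect the one genuinely delicate point to be the passage from $p$-purity for all primes to full purity, i.e.\ that the various local conditions patch to the global one — this needs the torsion hypothesis and a careful induction rather than a one-line appeal — while everything else is a matter of correctly invoking the Kummer pairing over the (not algebraically closed) field $K(\a)$ and translating between its three guises: Galois groups, purity of subgroups of $K(\a)^\times/K^\times$, and definability/irreducibility of the root varieties.
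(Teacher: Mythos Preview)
Your proposal is correct and follows essentially the same route as the paper: Kummer theory over $L_0=K(\a)$ for part (1), the $\mu_l^n$-action on the fibres of $\sqrt[l]{V}\to V$ for part (2), and the reduction to primes via purity together with an inverse-limit argument for part (3). The paper's own proof is far more terse---it records the isomorphism $A/(L_0^\times)^l\cong\langle a_1,\ldots,a_n\rangle/\bigl(\langle a_1,\ldots,a_n\rangle\cap (L_0^\times)^l\bigr)$ and cites Lang for (1)(a)$\Leftrightarrow$(b), observes for (2) that the irreducible components of $\sqrt[l]{V}$ are all $\mu_l^n$-translates of one another (hence equidimensional) and then invokes (1)(a)$\Leftrightarrow$(d), and leaves (3) entirely to the reader---so you are supplying exactly the details the paper omits.

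One small point worth tightening: your parenthetical gloss on multiplicative independence over $K^\times$ in (1)(b) is not quite right. Condition (b) asks for \emph{full} multiplicative independence over $K^\times$ (no nontrivial relation at all), not merely independence modulo $l$. The passage from (a) to this stronger statement uses that $K^\times$ is divisible (so any relation $\prod a_i^{k_i}\in K^\times$ lands in $(L_0^\times)^l$), the intersection condition then forces all $k_i\in l\Z$, and iterating gives $k_i=0$. This infinite-descent step is implicit in your plan but obscured by the parenthetical; otherwise the argument is sound.
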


\begin{proof}
Note that, with the notation from (1.b), letting $A:=(L_0^\times)^l\langle a_1,\ldots,a_n\rangle$, one has $A/(L_0^\times)^l\cong\langle a_1,\ldots,a_n\rangle/\langle a_1,\ldots,a_n\rangle\cap (L_0^\times)^l$, 
so (a)$\iff$(b) in (1) follows from Kummer theory (see e.g.\ \cite[VI. Thm 8.1]{La93}). The other equivalences are easily verified. 

In (2), note that if $W$ is an irreducible component of maximal dimension of $\sqrt[l]{V}$, then all the other irreducible components are multiplicative 
translates of $W$ by some $l$-torsion element $\zeta\in\Gm^n$. In particular, $\sqrt[l]{V}$ is equidimensional. Now, (2) follows, using (a)$\Leftrightarrow$(d) in (1). Part (3) is left to the reader. 
\end{proof}


The pathology we encountered in Example \ref{Ex:ChoosingRoots} does not exist in case the tuples are Kummer generic, as is shown by the following corollary.

\begin{Cor}\label{C:KummerSameType}
Let $K=K^{alg}\leq L\models T$, where $T$ equals $T_{\omega}$ or $T_{\mu}$. Let $\a,\a'\in L$ be such that $K\a\leq L$ and $K\a'\leq L$. Suppose that $\a$ and $\a'$ are coloured in the same way and 
that $\a$ and $\a'$ have the same field type over $K$. Moreover, suppose that $\a$ is Kummer generic over $K$. Then $\tp_T(\a/K)=\tp_T(\a'/K)$.
\end{Cor}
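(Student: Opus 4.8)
The goal is to upgrade "same field type" to "same $T$-type" when one of the tuples is Kummer generic. By Fact \ref{F:PoizatGreen}(d) (resp.\ Fact \ref{F:BadField}(b)), it suffices to produce an $\L$-isomorphism $\cl_\omega(K\a)\simeq_\L\cl_\omega(K\a')$ over the map $K\a\mapsto K\a'$. Since $K\a\leq L$ and $K\a'\leq L$, the self-sufficient closures are just the field-theoretic algebraic closures: $\cl_\omega(K\a)=K(\a)^{alg}$ and $\cl_\omega(K\a')=K(\a')^{alg}$. So I need a field isomorphism $\tau\colon K(\a)^{alg}\to K(\a')^{alg}$ with $\tau(\a)=\a'$, $\tau\!\upharpoonright\!K=\id_K$, that \emph{also preserves the green predicate}, i.e.\ $\tau(\UE(K(\a)^{alg}))=\UE(K(\a')^{alg})$.

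First I would fix a field embedding extending $\a\mapsto\a'$: the hypothesis that $\a,\a'$ have the same field type over $K$ gives an isomorphism $\sigma_0\colon K(\a)\to K(\a')$ fixing $K$ with $\sigma_0(\a)=\a'$, which then extends (non-uniquely) to some field isomorphism $\sigma\colon K(\a)^{alg}\to K(\a')^{alg}$. The problem is that $\sigma$ need not respect $\UE$. The green points of $K(\a)^{alg}$ form a subgroup of $\left(K(\a)^{alg}\right)^\times$; since $K=K^{alg}\leq L$, $K$ is self-sufficient, and one checks (as in the analysis preceding Example \ref{Ex:ChoosingRoots}) that $\UE(K(\a)^{alg})$ is contained in the divisible hull $\divh(K^\times\langle\a\rangle)$ — indeed any green point of the closure is algebraic over $K\a$ hence a root of some element of $K^\times\langle\a\rangle$, and genericity/freeness forces it into the divisible hull. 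The key point is then to compare $\UE(K(\a)^{alg})$ and $\sigma^{-1}(\UE(K(\a')^{alg}))$ as subgroups of $\divh(K^\times\langle\a\rangle)$ (here I use that $\sigma$ carries $\divh(K^\times\langle\a\rangle)$ onto $\divh(K^\times\langle\a'\rangle)$, since $\sigma(\a)=\a'$ and $\sigma$ fixes $K$). Both are subgroups containing $\UE(K)$ (because $\sigma$ fixes $K$ and $\a,\a'$ are coloured alike, so the $a_i$ with $a_i\in\UE$ map to green elements and the white ones to white), and both are, modulo $K^\times\langle\a\rangle$, "full" in the divisible hull in the sense of choosing one root at each level.

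The decisive step is to use Kummer genericity of $\a$ via Fact \ref{F:GaloisKummer}(3): every group automorphism of $\divh(K^\times\langle\a\rangle)$ fixing $K^\times\langle\a\rangle$ pointwise lifts to a field automorphism of $K(\a)^{alg}$. Now $g:=\sigma^{-1}\circ(\text{the transport of }\UE)$ — more precisely, the group automorphism of $\divh(K^\times\langle\a\rangle)$ sending, at each "root slot", the green root chosen in $K(\a)^{alg}$ to the $\sigma$-preimage of the green root chosen in $K(\a')^{alg}$ — fixes $K^\times\langle\a\rangle$ pointwise (the $a_i$ and elements of $K$ are fixed because the colourings agree and the base is fixed). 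Lift $g$ to a field automorphism $\rho\in\Aut(K(\a)^{alg}/K(\a))$. Then $\tau:=\sigma\circ\rho$ is a field isomorphism $K(\a)^{alg}\to K(\a')^{alg}$ fixing $K$, sending $\a$ to $\a'$, and by construction sending each green root slot to the designated green element of $K(\a')^{alg}$; hence $\tau(\UE(K(\a)^{alg}))=\UE(K(\a')^{alg})$, i.e.\ $\tau$ is an $\L$-isomorphism of the self-sufficient closures. Invoking Fact \ref{F:PoizatGreen}(d) / Fact \ref{F:BadField}(b) concludes $\tp_T(\a/K)=\tp_T(\a'/K)$.

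**Main obstacle.** The delicate bookkeeping is in making precise the "root slots" and checking that the target set $\UE(K(\a')^{alg})$ really does determine, via $\sigma^{-1}$, a well-defined \emph{group} automorphism of $\divh(K^\times\langle\a\rangle)$ fixing $K^\times\langle\a\rangle$: one must verify that $\UE$ of each closure is a subgroup $\Gamma$ with $K^\times\langle\a\rangle\subseteq\divh(\Gamma)$, $\Gamma\cap K^\times=\UE(K)$, $\Gamma$ divisible and torsion free, and that such a $\Gamma$ is exactly the data of a section of $\divh(K^\times\langle\a\rangle)/K^\times\langle\a\rangle$ compatible with the fixed colouring of the $a_i$ — so that the difference of two such sections is an automorphism of the divisible hull fixing $K^\times\langle\a\rangle$ pointwise, to which Fact \ref{F:GaloisKummer}(3) applies. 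I expect this is where Kummer genericity is genuinely needed and where Example \ref{Ex:ChoosingRoots} shows the statement fails without it (there, the relevant automorphism of the divisible hull does not lift compatibly). The rest — self-sufficiency giving $\cl_\omega=(\,\cdot\,)^{alg}$, and the passage from an $\L$-isomorphism of closures to equality of types — is immediate from the quoted facts.
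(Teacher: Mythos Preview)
Your proposal is correct and follows essentially the same route as the paper: pick a field isomorphism $K(\a)^{alg}\to K(\a')^{alg}$ over $K\a\mapsto K\a'$, observe that the two green subgroups live inside $\divh(K^\times\langle\a\rangle)$ (resp.\ its image), correct the discrepancy by a group automorphism of the divisible hull fixing $K^\times\langle\a\rangle$, lift this to $\Gal(K(\a))$ via Fact~\ref{F:GaloisKummer}(3), and compose. One small point: your justification that $\UE(K(\a)^{alg})\subseteq\divh(K^\times\langle\a\rangle)$ (``algebraic over $K\a$ hence a root of some element of $K^\times\langle\a\rangle$'') is not the right reason; it follows directly from the hypothesis $K\a\leq L$, which by the paper's convention means the green coordinates of $\a$ form a $\Q$-basis of $\UE(K(\a)^{alg})$ over $\UE(K)$ --- exactly what the paper invokes. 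Also, in your ``main obstacle'' the inclusion $K^\times\langle\a\rangle\subseteq\divh(\Gamma)$ is backwards (and false as stated, since $K^\times$ has white elements); you want $\Gamma\subseteq\divh(K^\times\langle\a\rangle)$.
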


\begin{proof}
Note that since $K\a\leq L$, we have $\cl_{\omega}(K\a)=K(\a)^{alg}$ (similarly for $\a'$).
Choose a field isomorphism $\alpha:K(\a)^{alg}\simeq K(\a')^{alg}$ extending the map $K\a\mapsto K\a'$. 
We have $\alpha(\divh(K^{\times}\langle\a\rangle))=\divh(K^{\times}\langle\a'\rangle)$, and it is easy to see that there 
exists $\sigma_0\in\Aut_{gp}\left(\divh(K^{\times}\langle\a\rangle)/K^{\times}\langle\a\rangle\right)$ such 
that 
$$\alpha_0\circ\sigma_0:\left(\divh(K^{\times}\langle\a\rangle),\UE\cap\divh(K^{\times}\langle\a\rangle)\right)\simeq
\left(\divh(K^{\times}\langle\a'\rangle),\UE\cap\divh(K^{\times}\langle\a'\rangle)\right)$$
is an isomorphism of groups respecting the green points (here $\alpha_0$ denotes the map $\alpha\upharpoonright_{\divh(K^{\times}\langle\a\rangle)}$). Since $\a$ contains a basis of $\UE\cap K(\a)^{alg}$ over $\UE(K)$, necessarily $\UE\cap K(\a)^{alg}=\UE\cap\divh(K^{\times}\langle\a\rangle)$ (similarly for $\a'$). 
By Fact \ref{F:GaloisKummer} there exists $\sigma\in\Gal\left(K(\a)\right)$ restricting to $\sigma_0$, 
and we obtain an $\L$-isomorphism
$$\alpha\circ\sigma:\left(K(\a)^{alg},\UE\cap K(\a)^{alg}\right)\simeq\left(K(\a')^{alg},\UE\cap K(\a')^{alg}\right).$$

The result follows, using Fact \ref{F:PoizatGreen}(d) or Fact \ref{F:BadField}(b), respectively.
\end{proof}

\begin{Fact}[{\cite[Lemma 2.1]{Zi06}}]     \label{F:FreeAb}
Let $K$ be an algebraically closed field and $L/K$ a finitely generated field extension. Then $L^{\times}/K^{\times}$ 
is a free abelian group. 
\end{Fact}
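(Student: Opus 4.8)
The plan is to induct on the transcendence degree $n=\tr(L/K)$, reducing at each step to the case of a curve, where $L^\times$ modulo the constants embeds visibly into a free abelian divisor group. For $n=0$ we have $L=K$ (as $K$ is algebraically closed), so $L^\times/K^\times=0$ is free. Assume $n\geq1$. Pick a transcendence basis $t_1,\dots,t_n$ of $L/K$ and let $M$ be the relative algebraic closure of $K(t_1,\dots,t_{n-1})$ in $L$. Then $K\subseteq M\subseteq L$, $M$ is algebraically closed in $L$, and since $L/K(t_1,\dots,t_{n-1})$ is finitely generated, $M$ is finite over $K(t_1,\dots,t_{n-1})$; hence $M/K$ is finitely generated of transcendence degree $n-1$, and by induction $M^\times/K^\times$ is free. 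Also $L/M$ is finitely generated of transcendence degree $1$.

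Next I would realise $L$ as the function field of a normal projective curve $C$ over $M$: concretely, take $C$ to be the set of discrete valuations of $L$ trivial on $M$, an integral Noetherian normal scheme of dimension $1$, proper over $M$. The divisor map $\divh\colon L^\times\to\Div(C):=\bigoplus_{P\in C}\Z$, $f\mapsto\sum_P v_P(f)\,P$ (sum over closed points), is a group homomorphism whose kernel is $\bigcap_P\mathcal O_{C,P}=\mathcal O_C(C)$. Since $C$ is integral and proper over $M$, $\mathcal O_C(C)$ is a finite field extension of $M$ inside $L$, hence equals $M$ because $M$ is algebraically closed in $L$. Thus $L^\times/M^\times\cong\divh(L^\times)$ is a subgroup of the free abelian group $\Div(C)$, and therefore is itself free.

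Finally I would combine the two steps. The exact sequence $0\to M^\times/K^\times\to L^\times/K^\times\to L^\times/M^\times\to0$ splits because $L^\times/M^\times$ is free: lift a basis of $L^\times/M^\times$ to $L^\times/K^\times$. Hence $L^\times/K^\times\cong (M^\times/K^\times)\oplus(L^\times/M^\times)$ is a direct sum of free abelian groups, so it is free, completing the induction.

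There is no serious obstacle here; the argument just assembles standard facts (existence of a normal projective model of a function field in one variable, the divisor exact sequence for a normal Noetherian domain, ``a subgroup of a free abelian group is free'', and splitting of a short exact sequence with free quotient). The one point requiring care is to guarantee that the group of global units of the model is no larger than $M^\times$; this is exactly why $M$ is taken to be relatively algebraically closed in $L$ — otherwise $\mathcal O_C(C)$ could be a proper finite extension of $M$ and $\mathcal O_C(C)^\times/M^\times$ might carry torsion. (Alternatively one can avoid the induction altogether: write $L=\operatorname{Frac}(A)$ for a normal finitely generated $K$-algebra $A$, observe that $L^\times/A^\times\hookrightarrow\Div(A)$ is free, invoke Rosenlicht's unit theorem to get that $A^\times/K^\times$ is finitely generated, hence — being torsion-free since $K=\bar K$ — free, and split the sequence $0\to A^\times/K^\times\to L^\times/K^\times\to L^\times/A^\times\to 0$ as before.)
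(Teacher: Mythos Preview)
Your argument is correct. The induction on $\tr(L/K)$ works as written: the relative algebraic closure $M$ is indeed a finite extension of $K(t_1,\dots,t_{n-1})$ (so the induction hypothesis applies), your choice of $M$ algebraically closed in $L$ is exactly what is needed to make $\mathcal O_C(C)=M$, and the splitting of the short exact sequence by a free quotient is standard.

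The paper, however, takes a more direct route and avoids induction entirely. It simply embeds $L^\times/K^\times$ into the group of Weil divisors of a single suitably chosen (normal complete) variety $V$ over $K$ with $K(V)=L$; concretely, one uses the divisor map $f\mapsto \sum_v v(f)\cdot v$ into $\bigoplus_v \Z\cdot v$, where $v$ ranges over the discrete valuations of $L$ trivial on $K$ with residue field of transcendence degree $\dim V-1$. The kernel of this map is exactly $K^\times$ because $K$ is algebraically closed, so $L^\times/K^\times$ sits inside a free abelian group and is therefore free. Your approach trades this single higher-dimensional divisor argument for a tower of curve steps; this is arguably more elementary, since it only requires the classical one-variable theory, but at the cost of the extra bookkeeping (the intermediate field $M$, the splitting of the exact sequence). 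Your parenthetical alternative via Rosenlicht's unit theorem is yet another variant, closer in spirit to the paper's one-shot embedding but using an affine rather than a complete model.
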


This fact is proved by embedding $L^{\times}/K^{\times}$ into the group of Weil divisors 
of a suitably chosen variety $V$ such that $K(V)=L$. In the proof of the following key 
definability result, we will give an effective version of this argument.

\begin{Prop}\label{P:SimpleDef}
Being a Kummer generic (irreducible) variety is a definable 
property in characteristic 0.
\end{Prop}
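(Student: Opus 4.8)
The plan is to give an effective, uniform version of the embedding of $L^{\times}/K^{\times}$ into a group of Weil divisors, so that Kummer genericity becomes visibly first-order. Fix a uniformly definable family $\mathcal{V}=\{V_{\b}\mid\b\models\theta(\z)\}$ of irreducible closed subvarieties of $\Gm^n$; after throwing away finitely many conditions we may assume the dimension $k$ of $V_{\b}$ is constant in the family and that each $V_{\b}$ is irreducible (irreducibility being definable by definability of types in $\mathrm{ACF}_0$). By Fact~\ref{F:GaloisKummer}(3) the variety $V_{\b}$ is Kummer generic if and only if it is $p$-Kummer generic for every prime $p$, and by Fact~\ref{F:GaloisKummer}(2) the latter says that $\sqrt[p]{V_{\b}}$ is irreducible. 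So it suffices to show two things: (i) for each fixed $p$, the set of $\b$ with $\sqrt[p]{V_{\b}}$ irreducible is definable (uniformly in $p$ this is routine since $\sqrt[p]{V_{\b}}$ is a uniformly definable family in $\Gm^n$, and irreducibility is definable); and (ii) there is a bound $N=N(\mathcal{V})$ such that $V_{\b}$ is Kummer generic as soon as it is $p$-Kummer generic for all primes $p\leq N$. Granting (ii), Kummer genericity of $V_{\b}$ is the finite conjunction over primes $p\le N$ of the definable conditions from (i), hence definable.

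The heart of the matter is the uniform bound in (ii). Here I would run the argument behind Fact~\ref{F:FreeAb} in families. Choose, uniformly in $\b$, a variety $V_{\b}$ itself (with $K(V_{\b})=L_{\b}$ over $K$); the group $L_{\b}^{\times}/K^{\times}$ embeds into the group $\Div(V_{\b})$ of Weil divisors via $f\mapsto \divh(f)$, and the image of the subgroup generated by the coordinate functions $a_1,\ldots,a_n$ together with $K^{\times}$ lands in the free abelian group on the (irreducible) components of the divisors of the $a_i$ and their zeros/poles. The point is that the coordinate functions $a_i=X_i\!\upharpoonright_{V_{\b}}$ have divisors supported on $V_{\b}\cap\{X_i=0\}$ and $V_{\b}\cap\{X_i=\infty\}$ inside an appropriate completion; the number of prime divisors involved, and the orders of vanishing, are bounded uniformly in $\b$ by degree considerations (B\'ezout-type bounds depending only on the degrees appearing in $\theta$). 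Thus $\langle a_1,\ldots,a_n\rangle K^{\times}/K^{\times}$ sits as a subgroup of a free abelian group $F_{\b}$ of rank and with structure constants bounded by some $M=M(\mathcal{V})$. Purity of a rank-$n$ subgroup of such an $F_{\b}$ is detected by the nonvanishing modulo $p$ of finitely many $n\times n$ minors of a bounded integer matrix whose entries are the orders of vanishing; the relevant minors, being bounded integers, have only finitely many prime divisors, all $\leq N$ for some $N=N(\mathcal{V})$. Hence $p$-purity for all $p\le N$ forces purity, which by Fact~\ref{F:GaloisKummer}(3)(a)$\Leftrightarrow$(d) is exactly Kummer genericity. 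This yields (ii).

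The main obstacle I expect is making the divisor-theoretic computation genuinely \emph{uniform}: one must choose the model $V_{\b}$ (or a family of models) and the prime divisors of $\divh(a_i)$ in a way that is itself definable, or at least bounded, across the family, and one must control the orders of vanishing $\mathrm{ord}_{D}(a_i)$ by the degree data of $\theta$. Concretely, I would embed $\Gm^n\hookrightarrow(\mathbb{P}^1)^n$, take $\overline{V}_{\b}$ the closure, and use that each boundary component $D$ of $\overline{V}_{\b}$ lies in some coordinate hyperplane-at-$0$ or at-$\infty$; the multiplicities are then read off from local equations of bounded degree. The fact that $L_{\b}^{\times}/K^{\times}$ is free (Fact~\ref{F:FreeAb}) is used to know that $\langle a_1,\ldots,a_n\rangle K^{\times}/K^{\times}$, being finitely generated and torsion-free, is actually free of rank equal to its multiplicative rank; purity then has the clean minor-criterion above. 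Once all the bounds are assembled, the passage from ``$p$-Kummer generic for all small $p$'' to ``Kummer generic'' is the payoff, and combined with (i) it gives the proposition.

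\medskip

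\noindent\emph{Remark on alternatives.} One could instead argue directly with $\sqrt[p]{V_{\b}}$: by Fact~\ref{F:GaloisKummer}(2), Kummer genericity amounts to irreducibility of $\sqrt[p]{V_{\b}}$ for all $p$, and an absolute-irreducibility bound for the family $\{\sqrt[p]{V_{\b}}\}$ (degrees grow like $p\cdot\deg V_{\b}$ in each coordinate, but the obstruction to irreducibility is controlled by the same minor data as above) again reduces the infinite conjunction to a finite one. This route avoids Weil divisors but hides the same uniform bound inside a statement about how the number of geometric components of $\sqrt[p]{V_{\b}}$ depends on $p$; I find the divisor argument more transparent and it also matches the remark following Fact~\ref{F:FreeAb} that the proof gives an ``effective version'' of the freeness argument.
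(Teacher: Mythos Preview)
Your proposal is correct and follows essentially the same strategy as the paper: reduce Kummer genericity to $p$-Kummer genericity for finitely many primes $p\le N(\mathcal{V})$ (the rest being automatic), then observe that each $p$-Kummer genericity condition is definable via irreducibility of $\sqrt[p]{V_{\b}}$. The paper's organisation differs only cosmetically: it sets up, by compactness, explicit uniform degree bounds $m,k$ (your conditions (b),(c) analogues) on a transcendence basis $\a_I$ and on the minimal polynomials of the remaining coordinates over $K(\a_I)$, then uses \emph{valuations} in $\RegP(K(V)/K)$ rather than Weil divisors on a chosen completion; this yields the explicit bound $|v(a_j)|\le mk$, hence $|\det(v_i(a_j))|\le n!\,m^nk^n$, so any prime $p>n!\,m^nk^n$ is automatically fine. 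Your Weil-divisor/Bézout phrasing and the paper's valuation-theoretic phrasing are two packagings of the same effective freeness argument; the paper's version sidesteps the (acknowledged) uniform-model issue you raise by never fixing a projective model at all, working instead with the abstract divisor group $\bigoplus_{v\in\RegP}\Z\cdot v$.
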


\begin{proof}
Let $\mathcal{V}=\{V_{\b}\,|\,\models\theta(\b)\}$ be a uniformly definable family of closed subvarieties of $\Gm^{n}$. If $V_{\b}$ is not Kummer generic, it is easy to construct a formula $\theta'(\z)\in\tp(\b)$ such that 
whenever $\models\theta'(\b')$, then $V_{\b'}$ is not Kummer generic. By compactness, it thus suffices 
to construct, for every tuple $\b_0$ such that $V_{\b_0}$ is Kummer generic, some $\theta_0(\z)\in\tp(\b_0)$ such that $V_{\b'}$ is Kummer generic whenever $\models\theta_0(\b')$.

So assume $V_{\b_0}$ to be Kummer generic. Suppose that $V_{\b_0}$ is irreducible of dimension $d$. Choose $I\subseteq\{1,\ldots,n\}$, $|I|=d$, such 
that for generic $\a$ in $V_{\b_0}$ (over $K\models \mathrm{ACF}_0$ containing $\b_0$) one has 
$\a\in K(\a_I)^{alg}$, with $\a_I=(a_i)_{i\in I}$ (i.e.\ $\a_I$ is a transcendence basis of 
$K(V_{\b_0})=K(\a)$ over $K$). 
Strengthening $\theta$ and choosing appropriate natural numbers $m$ and $k$, we may assume that every variety $V=V_{\b}$ from the family 
$\mathcal{V}$ satisfies the following conditions (below, we will always work over an algebraically closed 
field $K$ over which $V$ is defined):
\begin{enumerate}[\upshape(a)]\setcounter{enumi}{\value{Var_enum}} 

\item \label{irred} $V$ is irreducible of dimension $d$.

\item \label{alg} Let $\a$ be generic in $V$. Then $\a_I$ is a transcendence basis of $K(V)=K(\a)$ over 
$K$. Moreover, $[K(\a):K(\a_I)]\leq m$.

\item \label{deg} Let $\a$ be generic in $V$, and let $\varepsilon:I\rightarrow\{-1,1\}$ be any function. Denote the tuple $(a_i^{\varepsilon(i)})_{i\in I}$ by $\a_I^{\varepsilon}$. 
Then any $a_j$ satisfies a polynomial equation over 
$K(\a_I^{\varepsilon})$ of the form

$$Y^m+\frac{f_{m-1}(\a_I^{\varepsilon})}{g_{m-1}(\a_I^{\varepsilon})}Y^{m-1}+\ldots+  \frac{f_0(\a_I^{\varepsilon})}{g_0(\a_I^{\varepsilon})}=0,$$
where $f_l(\a_I^{\varepsilon})$ and $g_l(\a_I^{\varepsilon})$ are polynomials in $\a_I^{\varepsilon}$ of total degree at most $k$ (for 
$0\leq l<m$).

\item \label{Ax} Let $\a$ be generic in $V$. Then 
$\a$ is multiplicatively independent over $K^{\times}$.
\end{enumerate}

By standard arguments we may achieve (\ref{irred}), (\ref{alg}) and (\ref{deg}). The property 
(\ref{Ax}) is definable by Fact \ref{F:RotundDef}(1).

\smallskip
\noindent
Claim. \emph{For a given $l\geq2$, there is $\theta_l(\z)\in\tp(\b_0/K)$ such that for any 
$\b$ with $\models \theta_l(\b)$ the variety $V_{\b}$ is $l$-Kummer generic.}

By Fact \ref{F:GaloisKummer}, $V_{\b}$ is $l$-Kummer generic if and only if the variety defined by the condition 
 ``$(X_1^l,\ldots,X_n^l)\in V_{\b}$" is irreducible. This proves the claim, for the latter condition is definable in $\b$.
 
 \smallskip
 
 Since a variety is Kummer generic if it is $p$-Kummer generic for every prime number $p$, using the previous claim, the 
 proof of the proposition is thus finished once the following lemma is established. 
 \end{proof}
 
 \begin{Lem}
 Let $V\subseteq\Gm^n$ be as above, satisfying (\ref{irred}-\ref{Ax}) (with $m$ and 
 $k$ as in (\ref{alg}) and (\ref{deg}), respectively). Let $p>n!m^nk^n$ be a prime number. Then $V$ 
 is $p$-Kummer generic.
 \end{Lem}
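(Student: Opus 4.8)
The plan is to deduce $p$-Kummer genericity of $V$ from the purity criterion in Fact~\ref{F:GaloisKummer}. Fix an algebraically closed field $K$ over which $V$ is defined and let $\a$ be generic in $V$ over $K$; write $L=K(\a)$. By (\ref{Ax}) the tuple $\a$ is multiplicatively independent over $K^{\times}$, so by Fact~\ref{F:GaloisKummer}(1) it remains to show that the subgroup $\langle a_1/K^{\times},\ldots,a_n/K^{\times}\rangle$ is $p$-pure in $L^{\times}/K^{\times}$. So suppose towards a contradiction that $a_1^{e_1}\cdots a_n^{e_n}=c\,x^{p}$ for some $c\in K^{\times}$, $x\in L^{\times}$ and integers $0\le e_i<p$ that are not all zero; it then suffices to derive $e_i=0$ for all $i$.

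The first step is to make the embedding underlying Fact~\ref{F:FreeAb} explicit by choosing a good model. By (\ref{alg}) the coordinates $\a_I$ form a transcendence basis of $L/K$ with $[L:K(\a_I)]\le m$. Let $\tilde V$ be the normalisation of $(\mathbb P^{1})^{I}$ in $L$; then $\pi\colon\tilde V\to(\mathbb P^{1})^{I}$ is finite of degree $\le m$, and $\tilde V$ is a normal projective model of $L$. Since $K$ is algebraically closed, the only invertible global regular functions on $\tilde V$ are the constants, so the divisor map $\divh\colon L^{\times}/K^{\times}\hookrightarrow\Div(\tilde V)=\bigoplus_{D}\Z\cdot D$ (sum over prime divisors $D$) is injective. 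Applying $\divh$ to $a_1^{e_1}\cdots a_n^{e_n}=c\,x^{p}$ gives $\sum_i e_i\divh(a_i)=p\divh(x)$, whence
\[
\sum_{i=1}^{n}e_i\cdot\operatorname{ord}_D(a_i)\equiv 0\pmod p\quad\text{for every prime divisor }D\text{ of }\tilde V .
\]
Moreover, by (\ref{Ax}) the divisors $\divh(a_1),\ldots,\divh(a_n)$ are $\Z$-linearly independent in $\Div(\tilde V)$, so there exist prime divisors $D_1,\ldots,D_n$ of $\tilde V$ for which the integer matrix $A=\bigl(\operatorname{ord}_{D_s}(a_i)\bigr)_{1\le s,i\le n}$ satisfies $\det A\neq 0$.

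The heart of the argument is the effective bound $\lvert\operatorname{ord}_D(a_i)\rvert\le mk$ for \emph{every} prime divisor $D$ of $\tilde V$ and every $i$. As $\pi$ is finite, $D$ lies over a prime divisor of $(\mathbb P^{1})^{I}$, with ramification index $e(D)\le m$. If that prime divisor is a toric boundary component $\{t_{i_0}=0\}$ or $\{t_{i_0}=\infty\}$, then for $i\in I$ one has $\operatorname{ord}_D(a_i)\in\{0,\pm e(D)\}$, so $\lvert\operatorname{ord}_D(a_i)\rvert\le m\le mk$; if it is an interior hypersurface of $\Gm^{I}$, then $\operatorname{ord}_D(a_i)=0$ for $i\in I$. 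For $j\notin I$ I would invoke (\ref{deg}): choosing $\varepsilon$ so that the local parameter downstairs occurs among the $\a_I^{\varepsilon}$ (this is precisely where the sign flips in (\ref{deg}) are used — to cover the components of $\divh(a_i)$ lying over $\{t_{i_0}=\infty\}$ as well as the poles of $a_j$), $a_j$ satisfies a monic degree-$m$ equation over $K(\a_I^{\varepsilon})$ whose coefficients $f_l/g_l$ have $\operatorname{ord}_D$-value in $[-mk,mk]$; a Newton-polygon estimate for the roots of this equation, applied both to $a_j$ and to $a_j^{-1}$ (which satisfies an equation of the same shape), then yields $\lvert\operatorname{ord}_D(a_j)\rvert\le mk$.

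Granting this bound, the trivial estimate on a determinant gives $\lvert\det A\rvert\le n!\,(mk)^{n}=n!\,m^{n}k^{n}<p$, so $p\nmid\det A$ and $A$ is invertible modulo $p$. Feeding the displayed congruences into the linear system $\sum_i\operatorname{ord}_{D_s}(a_i)\,e_i\equiv 0\pmod p$ ($1\le s\le n$) forces $e_i\equiv 0\pmod p$ for all $i$, hence $e_i=0$ since $0\le e_i<p$; this contradicts the choice of the $e_i$, and the lemma follows. I expect the genuine obstacle to be the effective order bound: controlling, uniformly and with the precise constant $mk$, the orders of the coordinate functions along the boundary of $\tilde V$ — in particular dealing with ramification and with the Newton-polygon contribution of the non-basis coordinates $a_j$, and checking that the parameter $\varepsilon$ in (\ref{deg}) is exactly what captures the parts of the $\divh(a_i)$ "at infinity". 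The reduction to purity, the choice of model, and the final linear algebra modulo $p$ should all be routine once this is in place.
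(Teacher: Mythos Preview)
Your argument is correct and follows essentially the same route as the paper's: embed $L^{\times}/K^{\times}$ into a free group of divisors, bound each $\lvert\operatorname{ord}(a_j)\rvert$ by $mk$ using the degree-$m$ equation from (\ref{deg}) (with the sign-flip $\varepsilon$ handling the components at infinity), and then run the determinant estimate $\lvert\det\rvert\le n!\,m^nk^n<p$ to get invertibility modulo $p$. The only difference is packaging: you work with prime divisors on the normalisation of $(\mathbb{P}^1)^I$ in $L$---precisely the geometric alternative the paper mentions in a footnote---whereas the paper works directly with the set $\mathfrak{Reg}(K(V)/K)$ of divisorial valuations; and for the key order bound the paper extracts $(m-r)v(a_j)=v(f_r/g_r)$ for some $r<m$ straight from the ultrametric inequality, which is a bit more direct than your Newton-polygon sketch applied to $a_j$ and $a_j^{-1}$ separately.
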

 \begin{proof}
 
 (i) We consider the \emph{group of (Weil) divisors}\footnote{Alternatively, the classical and 
 more geometric way would be to work with the group of Weil divisors of a certain projective variety $V'$, namely the normalisation of $\mathbb{P}^d$ in the field $K(V)\supseteq K(\mathbb{P}^d)=K(\a_I)$.} of the 
 function field $K(V)/K$, given by
 
 $$\Div(K(V)/K):=\bigoplus_{v\in\RegP}\Z\cdot v,$$
 where $\RegP=\RegP(K(V)/K)$ denotes the set of all discrete valuations of $K(V)$ which are trivial on $K$ and such that the residue 
 field is of transcendence degree $d-1$ over $K$. 
 
 For any $f\in K(V)$ there is only a finite number of $v\in \RegP(K(V)/K)$ such that $v(f)\neq0$, and one has $f\in K$ if 
 and only if $v(f)=0$ for all $v\in \RegP(K(V)/K)$. This follows from standard arguments in valuation theory. (We refer to sections VI.\S 14 and VII.\S 4bis in \cite{ZS60}.) 
 The following map is thus a group homomorphism which induces an embedding 
 of $K(V)^{\times}/K^{\times}$ into $\Div(K(V)/K)$:
 $$K(V)^{\times}\rightarrow\Div(K(V)/K),\quad f\mapsto(f):=\sum_{v}v(f)\cdot v$$
 
 \smallskip
 
 (ii) Let $v'$ be in $\RegP(K(\a_I)/K)$. Suppose $v'(a_i)\geq0$ 
 for all $i\in I$. (Replacing $\a_I$ by a suitable $\a_I^{\varepsilon}$, we may always achieve this.) It follows from the assumption on $v'$ that the ideal of $K[\a_I]$ given by the elements of positive valuation is a prime ideal of height 1, so equal to $(P)$ for some irreducible polynomial $P=P(\a_I)$. This means 
 that $v'\left(P^z\frac{f(\a_I)}{g(\a_I)}\right)=z$ for all $f$ and $g$ which are not divisible by $P$. 
 Denote this valuation by $v'_P$.
 
 \smallskip
  
 (iii) Let $v\in\RegP(K(\a)/K)$, and let $v'$ be its restriction to $K(\a_I)/K$ (which 
 is an element of $\RegP(K(\a_I)/K)$ by standard properties of algebraic extensions 
 of valuations.) By (ii), we may assume that $v'=v(P)v'_P$ for some irreducible 
 polynomial $P=P(\a_I)$. 
 
 We will show that $|v(a_j)|\leq mk$ for all $j\leq n$. By (\ref{deg}),
 $$a_j^m+\frac{f_{m-1}(\a_I)}{g_{m-1}(\a_I)}a_j^{m-1}+\ldots+  \frac{f_0(\a_I)}{g_0(\a_I)}=0,$$
 so there exists $r<m$ such that $v(a_j^m)=v\left(\frac{f_{r}(\a_I)}{g_r(\a_I)}a_j^r\right)$ and thus 
 $$(m-r)v(a_j)=v\left(\frac{f_{r}(\a_I)}{g_r(\a_I)}\right).$$ By (\ref{alg}) and the fundamental inequality, $|v(f)|\leq m|v'_P(f)|$ for any $f\in K(\a_I)$.
 
 Moreover, since the total degrees of $f_{r}$ and $g_{r}$ are bounded by $k$ (by (\ref{deg})), it follows that $|v'_P\left(\frac{f_{r}(\a_I)}{g_r(\a_I)}\right)|\leq k$. 
 Thus, $|v(a_j)|$ is bounded by $mk$.
 
 \smallskip
 
 (iv) Consider the elements $(a_1),\ldots,(a_n)$ of $\Div(K(V)/K)$. By (\ref{Ax}) and (i), they are 
 linearly independent over $\Z$, so 
 there are valuations $v_1,\ldots,v_n\in\RegP(K(V)/K)$ such that the square matrix 
 $M=\left(v_i(a_j)\right)_{i,j}$ has non-zero determinant. Now, $|\det(M)|\leq n!m^nk^n$ by (iii) and the Leibniz formula, so $\det(M)\not\equiv 0\mod p$ (as $p>n!m^nk^n$ by assumption). 
 This means that no 
 element of the form $\sum_{i=1}^nr_i(a_i)$, with $0\leq r_i<p$ not all 0, is divisible by $p$ in 
 $\Div(K(V)/K)$. It follows that $\prod_{i=1}^n a_i^{r_i}$ does not have a $p$-th root in $K(\a)$. By Fact \ref{F:GaloisKummer}, this shows 
 that $V$ is $p$-Kummer generic.
 \end{proof}

\begin{Rem}\label{R:Gabber}
Gabber suggested a completely different proof for definability of Kummer genericity, a proof which generalises to semiabelian varieties in arbitrary 
characteristic. 

In joint work with Bays and Gavrilovich \cite{BGH11}, we extract the `Galois theoretic' essence of Gabber's argument and give 
a model-theoretic proof which applies to any definable abelian group of finite Morley rank with the definable multiplicity property.
\end{Rem}

Before we finish this section, let us mention an important corollary of Proposition \ref{P:SimpleDef}. It was observed by Roche that there is a gap in the construction of the bad field as it is given in \cite{BHMW07}. 
The reason for 
this is intimately related to the problem raised in Example \ref{Ex:ChoosingRoots}. In fact, \cite[Bemerkung 6.7]{BHMW07} is not true in general, and so the 
proof of the economic amalgamation lemma \cite[Satz 9.2]{BHMW07} is not correct. Fortunately, we may provide the necessary technical improvement --- the existence of 
strongly minimal codes --- in Corollary \ref{C:SmCodes} below, so that the proof of the economic amalgamation lemma goes through without any changes. 

\smallskip

In his thesis \cite{Ro11}, Roche considers so-called \emph{octarine fields}, certain expansions of abelian varieties by a predicate for a non-algebraic subgroup, a context which is similar to bad fields. 
It is explained in detail there how strongly 
minimal codes are used to prove the economic amalgamation lemma. The same arguments apply in the context of bad fields.

\begin{Cor}  \label{C:SmCodes}
There is a collection of codes satisfying all the requirements of \cite[Definition 4.7]{BHMW07} and moreover that 
the instances of any code are strongly minimal definable sets.
\end{Cor}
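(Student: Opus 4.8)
The plan is to reprove the existence of a good collection of codes exactly as in \cite[\S 4]{BHMW07}, but imposing in addition that every variety involved be \emph{Kummer generic}. Recall that in \cite{BHMW07} a code $e$ is (roughly) given by a uniformly definable family of rotund varieties $V_{\b}$, its instance for a parameter $\b$ being the set of self-sufficient realisations, with a prescribed colouring, that are field-generic in $V_{\b}$ over $K:=\acl(\b)=\cl_{\omega}(\b)$. As Example \ref{Ex:ChoosingRoots} shows, such an instance need not be strongly minimal: two self-sufficient field-generic realisations of the same $V_{\b}$ may have different types, because different green roots have been chosen. This is precisely why \cite[Bemerkung 6.7]{BHMW07}, and with it the proof of the economic amalgamation lemma \cite[Satz 9.2]{BHMW07}, is not correct.

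So the first step is to replace, in the construction of \cite[\S 4]{BHMW07}, each defining family $\{V_{\b}\}$ by the subfamily consisting of its Kummer generic members. By Fact \ref{F:RotundDef} (rotundity is a definable property) together with Proposition \ref{P:SimpleDef} (Kummer genericity is a definable property), this subfamily is still uniformly definable, so one still has a legitimate collection of codes; and one checks the remaining requirements of \cite[Definition 4.7]{BHMW07} --- uniform definability of the instances, the parameter being the canonical base of the generic type, the existence of a reciprocal code, and the normalisation condition --- one by one: each is either inherited when passing to a definable subfamily, or re-established by the argument of \cite{BHMW07} with Proposition \ref{P:SimpleDef} supplying definability wherever it is invoked.

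The new content is the strong minimality of the instances. Fix one of the modified codes $e$, a parameter $\b$, set $K=\acl(\b)=\cl_{\omega}(\b)$ (an algebraically closed, self-sufficient subfield of the ambient model $L$), and let $V=V_{\b}$ be the corresponding Kummer generic rotund variety. Since $V$ is irreducible, all tuples field-generic in $V$ over $K$ share the same field type over $K$; since $V$ is rotund, every such tuple $\a$ is self-sufficient, i.e.\ $K\a\leq L$ (this is how instances are defined in \cite{BHMW07}, via Lemma \ref{L:RotundSelfsuff}); and since $V$ is Kummer generic, $\a$ is Kummer generic over $K$. Hence Corollary \ref{C:KummerSameType} applies (to $T_{\omega}$, and likewise to $T_{\mu}$) and yields that any two such tuples carrying the prescribed colouring have the same complete type over $K$. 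Thus the generic type of the instance $\phi_e(\x,\b)$ is unique, i.e.\ $\phi_e(\x,\b)$ has Morley degree $1$; combined with the Morley rank computation of \cite{BHMW07}, which our modification does not affect, this shows that every instance is strongly minimal. With strongly minimal codes available, the proof of \cite[Satz 9.2]{BHMW07} goes through unchanged, as explained --- in the closely related setting of octarine fields --- in \cite{Ro11}.

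I expect the main point requiring care to be not the strong minimality itself --- which, granting Proposition \ref{P:SimpleDef} and Corollary \ref{C:KummerSameType}, is now essentially immediate --- but rather checking that imposing Kummer genericity does not impoverish the collection: one still needs a code for (a representative of) every minimal prealgebraic extension that arises in the construction. Here one uses the freedom in the choice of green roots, together with the fact that $L^{\times}/K^{\times}$ is free abelian (Fact \ref{F:FreeAb}), to see that every such extension can be presented by a Kummer generic tuple, so that the modified collection is as rich as the original one.
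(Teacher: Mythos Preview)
Your proof is correct and follows essentially the same approach as the paper: restrict the codes to those whose underlying variety is Kummer generic (definable by Proposition~\ref{P:SimpleDef}), then invoke Corollary~\ref{C:KummerSameType} to conclude that each instance has a unique generic type and is therefore strongly minimal. Your final paragraph on completeness of the collection (via Fact~\ref{F:FreeAb}) makes explicit a point the paper's terse proof leaves to the reader.
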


\begin{proof}
We adopt the terminology and notation from \cite[Definition 4.7]{BHMW07}, restricting our attention to minimal 
prealgebraic formulas $\phi(\x)$ such that the corresponding variety is Kummer generic (equivalently, 
any generic solution of $\phi$ over an algebraically closed field is Kummer generic). 
We strengthen the definition of a code 
$\phi_{\alpha}(\x,\z)$ by adding that for any non-empty instance $\phi_{\alpha}(\x,\b)$, its 
Zariski closure $V_{\alpha}(\x,\b)$ is a Kummer generic variety (this is a definable property by Proposition \ref{P:SimpleDef}). 

It follows from Corollary \ref{C:KummerSameType} that $\phi_{\alpha}(\x,\z)\wedge\bigwedge_i \UE(x_i)$ is a 
family of strongly minimal sets.
\end{proof}

\section{Generic automorphisms of green and bad fields}\label{S:Gen_Green}\newcounter{Gen_enum}
In this section, we will establish the axiomatisability of the generic automorphism in the 
green and bad fields. We use the notation from Section \ref{S:delta}.

\subsection{Generic automorphisms of the green field of Poizat}

\begin{Lem}\label{L:GreenGenNotion}
The theory $T_{\omega}$ admits a geometric notion of genericity.
\end{Lem}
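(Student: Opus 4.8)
The plan is to exhibit a concrete relation $R_{\g}$ and verify the four axioms of Definition \ref{D:GeometricGenericity}. The natural candidate: a type $p(\x)\in S(M)$ is \emph{generic in $\phi(\x,\b)$} if $\phi \in p$, $\d$-genericity holds in the sense that $\x$ realizes the maximal possible dimension $\d(\x/M)$ compatible with lying in $\phi$, \emph{and} the tuple is ``green-Kummer-generic'' over $\cl_\omega(M)$ — i.e.\ the green part of $\x$ is as multiplicatively free as possible over the green points of $M$ and, crucially, Kummer generic over the base field in the sense of Section \ref{S:def}. Concretely, one works with formulas $\phi$ that (say) pin down the locus of $\x$ over $\cl_\omega(M)$ to be an irreducible, rotund, Kummer generic variety $V$, with the green coordinates forming a basis of the new green points; by Lemma \ref{L:RotundSelfsuff} rotundity is exactly self-sufficiency, so such $\x$ satisfy $M\a \leq \mathbb{M}$, and by Corollary \ref{C:KummerSameType} the field type over $\cl_\omega(M)$ together with the colouring then determines $\tp_{T_\omega}(\a/M)$. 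That is precisely what makes such $\phi$ \emph{nice}: there is a unique generic type, since Fact \ref{F:PoizatGreen}(d) reduces the type to data that Kummer genericity renders unambiguous.

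The steps, in order: \textbf{(1)} Fix the relation as above and identify the nice formulas as (essentially) those asserting ``$\x$ is generic in a fixed free, rotund, Kummer generic variety $V_{\b}$ over $\acl(\b)$, with prescribed colouring on coordinates, the green ones a basis of $\UE(\cl_\omega(\b\x))$ over $\UE(\b)$''. \textbf{(2)} \emph{Invariance} is immediate: all the data (dimension, self-sufficiency, colouring, Kummer genericity of the locus) are preserved under $\L$-automorphisms. \textbf{(3)} \emph{Coherence}: if $M \elex N$ and $\phi$ is nice with parameters in $M$, the generic type of $\phi$ over $N$ restricts to that over $M$. Here I would use the remark after Definition \ref{D:GeometricGenericity}, that in a stable theory the generic type of a nice formula is the unique automorphism-invariant (hence non-forking) extension of the generic type over the smaller model; since $T_\omega$ is $\omega$-stable (Fact \ref{F:PoizatGreen}(e)) and genericity over $N$ is again characterized by maximal $\d$-dimension plus Kummer genericity of the same variety $V_{\b}$ — which does not change — coherence follows from uniqueness of stationarization, together with the fact that $\d$-independence over models corresponds to non-forking (Fact \ref{F:PoizatGreen}(e), $\RM(\a/A)<\omega \iff \d(\a/A)=0$, plus additivity of $\d$). \textbf{(4)} \emph{Enough nice types}: given any $p_0\in S_n(M)$, I extend the tuple by adjoining a basis of the green points of $\cl_\omega(M\a_0)$ over $\UE(M)$ and then further green/white coordinates so that the resulting locus over $\acl(\b)$ is free and Kummer generic — the latter is arranged by Proposition \ref{P:SimpleDef} together with the observation that one can always move to an $l$-Kummer-generic refinement (passing to roots), exactly as in the proof of Corollary \ref{C:SmCodes}. \textbf{(5)} \emph{Definability of generic projections}: given nice $\psi(\x,\b) \to \phi(\x_1,\b_1)$ with the generic type projecting correctly, produce $\theta(\z,\z_1)\in\tp(\b\b_1)$ controlling this uniformly. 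This is where definability of rotundity (Fact \ref{F:RotundDef}), definability of the $\d$-predimension (Fact \ref{F:DimTypeDef}), and above all definability of Kummer genericity (Proposition \ref{P:SimpleDef}) are combined: $\theta$ says $V_{\b'}$ is irreducible, free, rotund, Kummer generic, projects dominantly onto $V_{\b_1'}$, and the colourings are compatible.

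The main obstacle is step \textbf{(5)} (definability of generic projections), and within it the genuinely new content is controlling the choice of \emph{green roots} uniformly in parameters: it is not enough that $\d$-dimension and rotundity be definable — one must ensure that the unique generic type of $\psi(\x,\b')$ really does project onto the unique generic type of $\phi(\x_1,\b_1')$, and uniqueness on both sides presupposes Kummer genericity of both loci. This is exactly the pathology of Example \ref{Ex:ChoosingRoots}, and it is resolved precisely because Proposition \ref{P:SimpleDef} makes Kummer genericity a first-order condition on the parameters; Corollary \ref{C:KummerSameType} then guarantees that, under that condition, field type plus colouring determines $T_\omega$-type, so that the projection of generic types is governed by the (definable) projection of varieties. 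A secondary technical point is verifying that the class of nice formulas I single out is rich enough for \emph{enough nice types} — i.e.\ that every type over a model is the projection of a nice one — which requires checking that one can always pass to an extension realizing a Kummer generic, rotund locus; this is routine given Fact \ref{F:PoizatGreen}(a) (amalgamation) and the root-adjunction trick, but it does need to be stated. I expect the remaining items (invariance, coherence, and the soft part of enough nice types) to be quick, essentially bookkeeping with the predimension and with Fact \ref{F:PoizatGreen}.
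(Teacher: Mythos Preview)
Your overall architecture is exactly the paper's: nice types are those whose green part is a Kummer generic basis of the green points of a self-sufficient extension, uniqueness comes from Corollary \ref{C:KummerSameType}, and the definability ingredients are Fact \ref{F:RotundDef}, Fact \ref{F:DimTypeDef} and Proposition \ref{P:SimpleDef}. There is, however, a genuine gap in your step (4), \emph{enough nice types}. You propose to arrange Kummer genericity of the green basis by ``passing to roots'', and you cite Corollary \ref{C:SmCodes} for this. But the proof of Corollary \ref{C:SmCodes} does nothing of the sort --- it merely \emph{restricts} to codes whose instances are already Kummer generic --- and root-adjunction does not work here: replacing a green basis $\overline{g}$ by $l$-th green roots changes the underlying variety, hence the bounds $m,k$ governing the effective lemma after Proposition \ref{P:SimpleDef}, so the finite set of bad primes can shift and there is no termination argument. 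The paper's mechanism is different and is the missing idea: by Fact \ref{F:FreeAb}, the group $K(\overline{g})^{\times}/K^{\times}$ is free abelian, so the subgroup generated by any green basis has a pure closure which is again free of the same rank; a basis of that pure closure, twisted by roots of unity to make it green, is Kummer generic over $K$ in a single step. Combined with Fact \ref{F:PoizatGreen}(f) (to reach any tuple from a green one), this is how the paper produces enough special types containing a given $\a$.

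There is a second, smaller omission in step (5). Saying that $V_{\b'}$ projects dominantly onto $V_{\b_1'}$ (plus irreducibility, rotundity, Kummer genericity of $V_{\b'}$, and compatible colourings) controls the field type and the colouring of the projection, but it does \emph{not} force the projected tuple $\tilde{a}_0$ to be self-sufficient: one needs $K\tilde{a}_0\leq K\tilde{a}$, i.e.\ \emph{relative} rotundity of the green extension. The paper encodes this by requiring, definably in the parameters, that $\Loc(\overline{g}_1/\Q(\b',\b_0',\overline{g}_0)^{alg})$ be rotund, where $\overline{g}_0\overline{g}_1$ completes the projected green basis $\overline{g}_0$ to a full one; this is Lemma \ref{L:RotundSelfsuff} applied over the intermediate field, and your $\theta$ must include it.
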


\begin{proof}
Consider a type $\tp(\tilde{a}/K)$ where $\tilde{a}$ is a finite tuple from 
$\mathfrak{C}\elres K\models T_{\omega}$ of the form 
$\overline{g}\overline{g'}\overline{w}\overline{w'}$ (maybe after reordering), satisfying 
the following conditions:
\begin{itemize}
\item[(i)] The elements from $\overline{g}\overline{g'}$ are green, those from $\overline{w}\overline{w'}$ are white.
\item[(ii)] $K\tilde{a}\leq\mathfrak{C}$, and $\overline{g}=(g_1,\ldots,g_n)$ is a basis of 
$\UE(K(\tilde{a})^{alg})$ over $\UE(K)$ such that $\overline{g}'\in\langle\UE(K)\overline{g}\rangle$.
\item[(iii)] $\overline{w}$ is multiplicatively independent over $K\overline{g}$, and 
$\overline{w}'\in K^{\times}\langle \overline{g}\overline{w}\rangle$.
\item[(iv)] $\overline{w}\in K[g_1,\ldots,g_n,\frac{1}{g_1},\ldots,\frac{1}{g_n}]$.
\item[(v)] $\overline{g}$ is Kummer generic over $K$.
\end{itemize}

Call a type \emph{special} if it satisfies  (i)-(v) above. 

Below, we will define a geometric notion of 
genericity where the nice types are given by the special types. Let us first show that there 
are `enough' special types. 
Let $K\models T_{\omega}$ and  $\a$ be an arbitrary finite tuple from $\mathfrak{C}\elres K$. Choose some finite green tuple $\overline{u}$ such that $\a\in K[\overline{u}]$. Such a tuple exists by Fact \ref{F:PoizatGreen}(f).

Combining the fact that $\tr(\cl_{\omega}(K\overline{u})/K)$ is finite with Fact \ref{F:FreeAb}, we may find some finite tuple $\tilde{a}$ containing $\a$, 
$\tilde{a}=\overline{g}\overline{g'}\overline{w}\overline{w'}$ (where all the elements outside $\a$ 
may be taken to be green) 
such that $\tp(\tilde{a}/K)$ is special. 

\medskip
Now suppose $\tp(\tilde{a}/K)$ is special, with 
$\tilde{a}=\overline{g}\overline{g'}\overline{w}\overline{w'}$ as above. Choose a finite 
$\b\in K$ such that 
\begin{itemize}
\item $\Loc(\overline{g}/K)=U(\x,\b)$ is defined over $\b$, similarly 
$\Loc(\overline{g},\overline{w}/K)=V(\x,\y,\b)$ and $\Loc(\tilde{a}/K)=W(\x,\x',\y,\y',\b)$. (This is equivalent to $\Cb_{\mathrm{ACF}}(\tilde{a}/K)\subseteq\b$.)
\item For any $g'$ from $\overline{g'}$ there exists a green $b_{g'}$ from $\b$ and $m_{1,g'},\ldots,m_{n,g'}\in\Z$ such that 
\begin{equation}
g'=b_{g'}\prod_{i=1}^n g_i^{m_{i,g'}}.\label{Eq:green}
\end{equation}
\item For any $w'$ from 
$\overline{w'}$ there exists some $b_{w'}$ from $\b$, integers $m_{i,w'}$, $1\leq i\leq n$ and 
$n_{i,w'}$, $1\leq i\leq l$
such that 
\begin{equation}
w'=b_{w'}\prod_{i=1}^n g_i^{m_{i,g'}}\prod_{i=1}^l w_i^{n_{i,w'}}.
\label{Eq:white}
\end{equation}
Moreover, if $n_{i,w'}=0$ for all $i$, then $b_{w'}$ is a white element.
\item For any $w$ from 
$\overline{w}$ there is a polynomial $f_w\in K[\overline{u},\overline{v}]$ with coefficients from $\b$ (so we may write it $f_w=F_w(\overline{u},\overline{v},\b)$) such that
\begin{equation}
w=F_w(g_1,\ldots,g_n,\frac{1}{g_1},\ldots,\frac{1}{g_n},\b).
\label{Eq:gay}
\end{equation}
 
\end{itemize}

Let $k:=\dim(U)=\dim(W)=\tr(\overline{g}/K)$ and $d:=\d(\tilde{a}/K)=2k-n$. The following conditions 
(\ref{Anfang}-\ref{Colours}) hold for $\b'=\b$, and they are definable in $\b'$. (Definability follows from 
Fact \ref{F:RotundDef} and Proposition \ref{P:SimpleDef}.)

\begin{enumerate}[\upshape(a)]\setcounter{enumi}{\value{Gen_enum}} 
\item\label{Anfang} $U(\x,\b')$ is irreducible of dimension $k$.
\item\label{Rotund} $U(\x,\b')$ is rotund.
\item \label{Kgen} $U(\x,\b')$ is Kummer generic.
\item \label{Bunt} $V(\x,\y,\b')$ is equal to the variety given by $U(\x,\b')$ together with the equations from \eqref{Eq:gay}, 
i.e.\ $y_w=F_w(x_1,\ldots,x_n,\frac{1}{x_1},\ldots,\frac{1}{x_n},\b')$, where $y_w$ is the variable 
corresponding to $w$.
\item \label{WholeBasis} $V(\x,\y,\b')\subseteq\Gm^{n+l}$ is free (where $l=\lgth(\y)$).
\item \label{GreenWhite} $W(\x,\x',\y,\y',\b')$ is equal to the variety given by $V(\x,\y,\b')$ together with the the corresponding equations from \eqref{Eq:green} and \eqref{Eq:white}.
\item \label{Colours} The quantifier-free types of $\b$ and $\b'$ in the language $\{\UE,=\}$ coincide.
\end{enumerate}

Let $\theta(\z)$ be an $\L$-formula such that $\models\theta(\b')$ if and only if the conditions (\ref{Anfang}-\ref{Colours}) are satisfied.

\medskip

\noindent
Claim. \emph{Suppose that $\b'\in K\models T_{\omega}$ such that $\models\theta(\b')$. Let 
$\tilde{a}=\overline{g}\overline{g'}\overline{w}\overline{w'}$ be a $K$-generic solution of $W_{\b'}$, 
and let $L:=K(\tilde{a})^{alg}=K(\overline{g})^{alg}$, $\UE(L):=\divh(\langle\UE(K)\overline{g}\rangle)$. 
Then $(L,\UE(L))$ is a self-sufficient extension of $(K,\UE(K))$, with $\delta(L/K)=d$. The tuple 
$\overline{g}\overline{g'}$ 
consists of green elements, whereas the elements from $\overline{w}\overline{w'}$ are white. 
\newline
Assume in addition that $L$ is self-sufficient in $\mathfrak{C}$. Then, $\tp_{\L}(\tilde{a}/K)$ is uniquely determined by: $\tilde{a}$ is (field) generic in $W_{\b'}$ over $K$, $\overline{g}\overline{g'}$ is green, $\overline{w}\overline{w'}$ is white and $K\tilde{a}\leq \mathfrak{C}$.} 

\medskip

By construction, $K\tilde{a}\leq L$ (and so also $K\leq L$) follows from (\ref{Rotund}). 
The fact that $\overline{g}\overline{g'}$ is a green tuple is true by construction, together with 
(\ref{GreenWhite}) and (\ref{Colours}). The colour assigned to each 
element $w$ of $\overline{w}$ is white, since this is a multiplicatively independent tuple 
over $K\overline{g}$ by (\ref{WholeBasis}) and (\ref{Bunt}). Combining (\ref{GreenWhite}) and 
(\ref{Colours}), we see that $\overline{w}'$ consists of white elements only. Note that the irreducibility of $W_{\b'}$ as well as $\delta(L/K)=d$ is an easy consequence of 
(\ref{Anfang}) together with the other conditions. 

Finally, if $L\leq \mathfrak{C}$ (from which we deduce $K\tilde{a}\leq\mathfrak{C}$), the 
type of $\tilde{a}$ over $K$ is determined in the described way, 
since $U_{\b'}$ is Kummer generic (and $W_{\b'}$ irreducible). This follows from Corollary \ref{C:KummerSameType} and 
proves the claim.

\medskip

Assume that $U,V,W,\theta,d$ are given as before, in the variables $\tilde{x},\z$, where 
$\tilde{x}= \x\x'\y\y'$. 
\begin{itemize}
\item Let $\phi_{\UE}(\tilde{x})$ be a formula expressing that the elements from $\x,\x'$ are green, 
and those from $\y,\y'$ white;
\item let $\phi_d(\tilde{x},\z)$ be an arbitrary formula from the partial type $\pi_d(\tilde{x},\z)$ (introduced in Fact 
\ref{F:DimTypeDef});
\item let $Z(\tilde{x},\z)$ be a uniform family of varieties;
\item let $\chi_1(\z)$ be a formula such that $\models \chi_1(\b')$ if and only if $Z_{\b'}$ is a proper 
subvariety of $W_{\b'}$;
\item let $\chi(\z)=\chi_1(\z)\wedge\theta(\z)$.
\end{itemize}

A \emph{special} formula is a formula of the form
\begin{equation}\label{Eq:Special}
\phi(\tilde{x},\z)=W(\tilde{x},\z)\wedge\neg Z(\tilde{x},\z)\wedge\phi_{\UE}(\tilde{x})\wedge\phi_d(\tilde{x},\z)\wedge\chi(\z).
\end{equation}

A non-empty instance of a special formula will also be called special. 

By the claim, for any $\b'\in K\models T_{\omega}$ such that $\models\chi(\b')$ there is a unique 
special type $p(\tilde{x})\in S(K)$ containing $\phi(\tilde{x},\b')$ such that any realisation 
$\tilde{a}$ of $p$ is generic (in the field sense) in $W_{\b'}$ over $K$. Moreover, using 
Fact \ref{F:DimTypeDef}, it is easy to see that the set of (instances of) special formulas in a 
given special type is dense in it.

\smallskip

We now define a notion of genericity, only using special formulas and special types. Let 
$\phi(\tilde{x},\b')$ and $p\in S(K)$ be special, and assume that $p$ contains $\phi(\tilde{x},\b')$. 
We say that $p$ is \emph{generic} in $\phi(\tilde{x},\b')$ if any $\tilde{a}\models  p$ is 
field generic in $W_{\b'}$, where $W(\tilde{x},\b')$ is as in \eqref{Eq:Special}. 

\smallskip

By what we have seen, special formulas (types, resp.) correspond to nice formulas (types, resp.). 
Since there are enough special types, in order to show that the notion of genericity we defined is a geometric 
notion of genericity, it is sufficient to show that it satisfies property (4) from Definition \ref{D:GeometricGenericity} (the remaining properties are clear).

\smallskip

To prove property (4), assume that $p$ is generic in $\phi(\tilde{x},\b)$, $p_0$ generic 
in $\phi_0(\tilde{x}_0,\b_0)$, $\tilde{x}_0$ is a subtuple of $\tilde{x}$, and that 
$p$ restricts to $p_0$. We have to find $\delta(\z,\z_0)\in\tp(\b\b_0)$ such that 
whenever $\models\delta(\b',\b_0')$, the generic type of $\phi(\tilde{x},\b')$ restricts 
to the generic type of $\phi_0(\tilde{x}_0,\b_0)$. Choose $\tilde{a}\models p$. Then 
$\tilde{a}_0=\overline{g}_0\overline{g}'_0\overline{w}_0\overline{w}'_0\models 
p_0$ and we observe: 
\begin{eqnarray}
\label{Eq:RelRotund}
K\tilde{a}_0\leq K(\tilde{a})^{alg}=:L\text{  or, equivalently,  }K\overline{g}_0\leq L\\
\label{Eq:FieldGen}
W(\tilde{x},\b) \text{  projects onto a generic subset of  }W_0(\tilde{x}_0,\b_0).
\end{eqnarray} 

Extend $\overline{g}_0$ to a (green) basis $\overline{g}_0\overline{g}_1\subseteq \overline{g}\overline{g}'$ 
of $\UE(L)$ over $\UE(K)$, and let $\x_1$ be the variable tuple corresponding to $\overline{g}_1$. Choose a formula 
$\delta(\z,\z_0)\in\tp(\b,\b_0)$ such that for any $(\b',\b_0')$ with $\models\delta(\b',\b_0')$ the following three conditions hold:
\begin{enumerate}
\item $\models \chi(\b')\wedge\chi_0(\b'_0))$.
\item $W(\tilde{x},\b')$ projects onto a generic subset of $W_0(\tilde{x}_0,\b'_0)$.
\item For generic 
$\tilde{a}=\overline{g}\overline{g}'\overline{w}\overline{w}'$ in $W_{\b'}$, the variety 
$\Loc(\overline{g}_1/\Q(\b'\b'_0\overline{g}_0)^{alg})$ is rotund.
\end{enumerate}
Note that the last property can be guaranteed using definability of types in algebraically 
closed fields, combined with Fact \ref{F:RotundDef}.

Let $\b',\b_0'\in K'\models T_{\omega}$ such that $K'\models\delta(\b',\b_0')$. By the above 
conditions on $\delta$, the generic type $p'(\tilde{x})\in S(K')$ of the special formula $\phi(\tilde{x},\b')$ 
restricts to the generic type $p_0'(\tilde{x}_0)$ of $\phi_0(\tilde{x}_0,\b_0')$. This is clear for 
the algebraic part of the type as for the colouring. Moreover, if $\tilde{a}\models p'$, then 
$K\tilde{a}_0\leq L=K(\tilde{a})^{alg}$ follows from (3) and Lemma \ref{L:RotundSelfsuff}. Since 
$L\leq\mathfrak{C}$, we deduce that $K\tilde{a}_0\leq\mathfrak{C}$, and so $\tilde{a}_0\models p'_0$.
\end{proof}

\begin{Thm}\label{T:GenGreen}
Let $T_{\omega}$ be the theory of the green field of Poizat (considered in an expansion by definition 
so that it eliminates quantifiers). Then $T_{\omega}A$ exists. Its reduct to the language of 
difference fields is equal to $ACFA_0$.
\end{Thm}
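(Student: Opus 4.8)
The strategy is to combine Lemma \ref{L:GreenGenNotion} with the general framework of Section \ref{S:GeometricFranework}. Since $T_\omega$ is $\omega$-stable (Fact \ref{F:PoizatGreen}(e)), in particular stable, and we work in an expansion by definitions which eliminates quantifiers (hence is model-complete), Lemma \ref{L:GreenGenNotion} tells us that $T_\omega$ admits a geometric notion of genericity. Proposition \ref{P:PartialGenTAExists} then applies verbatim and yields that $T_\omega A$ exists; the geometric axioms constructed in the proof of that proposition, specialised to the special formulas of Lemma \ref{L:GreenGenNotion}, give an explicit axiomatisation. So the first half of the theorem is essentially a citation once the lemma is in place.

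\textbf{Identifying the reduct.} For the second assertion, I would argue that the reduct of $T_\omega A$ to $\L_{rings}\cup\{\sigma\}$ is $ACFA_0$. First note that a model $(K,\UE(K),\sigma)\models T_\omega A$ has $K\models ACF_0$ (the field reduct of $T_\omega$ is $ACF_0$) and $\sigma$ a field automorphism, so $(K,\sigma)\models ACF_0{}_\sigma$. To get that $(K,\sigma)$ is existentially closed as a difference field, i.e. a model of $ACFA_0$, I would take a difference-field extension $(K,\sigma)\subseteq(N,\tau)$ and a quantifier-free $\L_{rings,\sigma}$-condition over $K$ realised in $N$; after the standard reduction this is a variety $W\subseteq K^n\times K^n$ with suitable projection properties and one needs $\bar c$ in $K$ with $(\bar c,\sigma(\bar c))\in W$. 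The point is to expand $(N,\tau)$ — or rather a green field containing it — to a model of $T_\omega$ (or pass to an extension that is e.c.\ in $T_{\omega,\sigma}$) in which the corresponding $\L_\sigma$-statement becomes a quantifier-free $\L_\sigma$-condition over the model $(K,\UE(K),\sigma)\models T_\omega A$; since the latter is e.c.\ for $T_{\omega,\sigma}$, the condition is already satisfied in $K$, and its $\L_{rings,\sigma}$-part gives the desired $\bar c$. Thus $(K,\sigma)$ is e.c., so $(K,\sigma)\models ACFA_0$. The converse inclusion — that every $ACFA_0$-statement is a consequence of $T_\omega A$ reduced — follows since $ACFA_0$ is model-complete and the reduct of any model of $T_\omega A$ is a model of $ACFA_0$, so the reduct theory is exactly $ACFA_0$.

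\textbf{Main obstacle.} The delicate point is the expansion/colouring step in identifying the reduct: given a difference-field extension realising the quantifier-free type, one must produce a \emph{green} difference-field extension (respecting $\le$ and the green predicate) in which the relevant subvariety data is visible, so that e.c.-ness in $T_{\omega,\sigma}$ can be invoked. This amounts to checking that any finitely generated difference subfield of $N$ over $K$ can be coloured so as to sit self-sufficiently inside some model of $T_\omega$ compatible with $\sigma$ — essentially a relative version, for difference fields, of the amalgamation used to build $K_\omega$, together with the fact (Fact \ref{F:PoizatGreen}) that $\acl=\cl_\omega$ and that field-genericity plus Kummer-genericity pins down $T_\omega$-types. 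Once one knows $\bar c\in N$ with $(\bar c,\tau(\bar c))\in W$ can be transported back to $K$ via e.c.-ness in $T_{\omega,\sigma}$, the rest is formal. I would expect this bookkeeping to be the only real content beyond assembling the cited results.
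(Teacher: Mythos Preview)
Your argument for the existence of $T_\omega A$ is correct and identical to the paper's: cite Lemma~\ref{L:GreenGenNotion} and apply Proposition~\ref{P:PartialGenTAExists}.

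For the reduct statement, however, two things go wrong.

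\emph{First}, the step you flag as the ``main obstacle'' is in fact trivial, and the paper's argument is much simpler than what you outline. Given $(K,\UE(K),\sigma)\models T_\omega A$ and a difference-field extension $(K,\sigma)\subseteq (L,\sigma)$, you do not need any delicate colouring or Kummer-genericity bookkeeping: simply declare $\UE(L):=\UE(K)$, i.e.\ add \emph{no} new green points. Then for any intermediate $K\subseteq L'\subseteq L$ of finite transcendence degree one has $\delta(L'/K)=2\tr(L'/K)\ge 0$, so $K\leq L$ is automatic. Now embed $(L,\UE(K),\sigma)$ self-sufficiently into some $(M,\UE(M),\sigma)\models T_\omega A$; model-completeness of $T_\omega A$ gives $(K,\UE(K),\sigma)\preccurlyeq(M,\UE(M),\sigma)$, whence $(K,\sigma)$ is existentially closed in $(L,\sigma)$. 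This is the whole argument for $(K,\sigma)\models ACFA_0$.

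\emph{Second}, and more seriously, you have not established that the reduct theory is \emph{exactly} $ACFA_0$ rather than a proper extension of it. Your sentence ``the converse inclusion\ldots follows since $ACFA_0$ is model-complete and the reduct of any model of $T_\omega A$ is a model of $ACFA_0$'' only shows $ACFA_0\subseteq (T_\omega A)\!\upharpoonright_{\L_{rings,\sigma}}$; since $ACFA_0$ is \emph{not} complete, this does not give equality. One must also check that every completion of $ACFA_0$ arises as the reduct of some model of $T_\omega A$. The paper does this in one line: for any $\sigma\in\Gal(\Q)$, the structure $(\Q^{alg},\{1\},\sigma)$ is a green field with automorphism (with trivial green group) that embeds self-sufficiently into a model of $T_\omega A$; since completions of $ACFA_0$ are determined by the action of $\sigma$ on $\Q^{alg}$ (Fact~\ref{T:CP}(2)), every completion is attained.
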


\begin{proof}
Lemma \ref{L:GreenGenNotion} shows that $T_{\omega}$ admits a geometric notion of genericity. 
Thus $T_{\omega}A$ exists by Proposition \ref{P:PartialGenTAExists}.

\smallskip

Now consider $\left(K,\UE(K),\sigma\right)\models T_{\omega}A$. Suppose that 
$(K,\sigma)\subseteq(L,\sigma)\models ACFA_0$. Putting $\UE(L):=\UE(K)$, we may 
expand the difference field $(L,\sigma)$ to a green field with automorphism $(L,\UE(L),\sigma)$. 
Then $K\leq L$, and there is $\left(L,\UE(L),\sigma\right)\subseteq\left(M,\UE(M),\sigma\right)\models T_{\omega}A$ 
such that $L\leq M$. Since $(K,\UE(K),\sigma)\elex (M,\UE(M),\sigma)$, it follows in particular 
that $(K,\sigma)$ is existentially closed in $(L,\sigma)$. Thus, $(K,\sigma)$ is an existentially closed difference field, i.e.\ a model of $ACFA_0$. 

Let us now show that every completion of $ACFA_0$ is attained in this manner. Note that for any 
$\sigma\in\Gal(\Q)$, the green field with automorphism $(\Q^{alg},\{1\},\sigma)$ embeds (in a 
self-sufficient way) into a model of $T_{\omega}A$. By Fact \ref{T:CP}(2), any completion of $ACFA_0$ is determined 
by the action of $\sigma$ on $\Q^{alg}$. This shows the result.
\end{proof}

We already mentioned that for stable $T$, the existence of $TA$ implies that $T$ does not have the finite cover 
property. Thus, Theorem \ref{T:GenGreen} implies the following result of Evans \cite{Ev08}.

\begin{Cor}
The green field of Poizat does not have the finite cover property.\qed
\end{Cor}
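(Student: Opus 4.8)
The plan is simply to feed Theorem \ref{T:GenGreen} into the general implication, recalled in Section \ref{S:GenAuto} and attributed there to Kudaiberganov, that for a stable theory $T$ the existence of $TA$ forces $T$ to be nfcp. Concretely: by Fact \ref{F:PoizatGreen}(e) the theory $T_{\omega}$ is $\omega$-stable (of Morley rank $\omega\cdot2$), hence stable, and stability is unaffected by passing to the expansion by definitions (the quantifier-eliminating Morleyisation) in which Theorem \ref{T:GenGreen} is phrased. That theorem then gives that $T_{\omega}A$ exists, whence Kudaiberganov's observation applies and shows that the Morleyisation of $T_{\omega}$ does not have the finite cover property; consequently neither does $T_{\omega}$ in the original language $\mathcal{L}=\mathcal{L}_{rings}\cup\{\UE\}$, since the finite cover property is insensitive both to expansions by definitions (naming $\emptyset$-definable sets) and to passing to $T^{eq}$.

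There is essentially no obstacle here beyond Theorem \ref{T:GenGreen} itself: the whole content of the corollary is packed into the existence of $T_{\omega}A$, which was established via the geometric notion of genericity of Lemma \ref{L:GreenGenNotion} together with Proposition \ref{P:PartialGenTAExists}. The only point requiring a moment's care is the bookkeeping about languages — checking that the passage between $T_{\omega}$ and its Morleyisation alters neither stability nor nfcp — and for this one invokes the standard fact that nfcp, like stability, is preserved under naming $\emptyset$-definable sets and under the $(-)^{eq}$ construction. This reproves the result of Evans \cite{Ev08} quoted just above the statement.
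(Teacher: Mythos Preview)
Your proposal is correct and follows exactly the paper's own argument: the corollary is stated immediately after Theorem \ref{T:GenGreen} with only a \qed, the paper having just recalled Kudaiberganov's observation that existence of $TA$ for stable $T$ implies nfcp. Your added remarks about Morleyisation and language bookkeeping are more explicit than the paper bothers to be, but the approach is identical.
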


\subsection{Generic automorphisms of bad fields}

\begin{Thm}\label{T:Bad_DMP}
The theory $T_{\mu}$ has DMP.
\end{Thm}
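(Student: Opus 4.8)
The plan is to show that $T_\mu$ has the definable multiplicity property by reducing the computation of $\RDM(\phi(\x,\b))$ to field-theoretic data (the Zariski closures of the relevant sets together with their dimensions and Kummer genericity status) plus the colouring information, all of which vary definably in families. Since $T_\mu$ is model-complete (Fact~\ref{F:BadField}(f)) and of Morley rank $2$ with $\UE$ strongly minimal (Fact~\ref{F:BadField}(c)), and since by Fact~\ref{F:BadField}(e) every element lies in $\dcl(\UE(K))$, I would first reduce to analysing formulas $\phi(\x,\b)$ where $\x$ ranges over (tuples of) green points; here $\RM$ agrees with $\d$ by Fact~\ref{F:BadField}(d), and $\d$ is controlled by the predimension $\delta$, which is computed from transcendence degrees and linear dimensions of green tuples inside self-sufficient closures.

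The key steps, in order: (1) Using Fact~\ref{F:BadField}(b) (characterisation of types via self-sufficient closures) together with Corollary~\ref{C:KummerSameType}, observe that for a \emph{special}-type situation as in the proof of Lemma~\ref{L:GreenGenNotion} — a $K$-generic, Kummer generic, self-sufficient green tuple $\tilde{a}$ over a model — the full $\L$-type is determined by the field type, the colouring, and the self-sufficiency condition $K\tilde{a}\leq\mathfrak{C}$. Hence the Morley degree of the corresponding definable set equals the number of Galois conjugacy classes coming purely from choices of green roots, which by Kummer genericity is exactly the number of ``green-root choices'' and is constant in Kummer-generic families. (2) Show that an arbitrary formula $\phi(\x,\b)$ of $\RM$ $(r,\cdot)$ decomposes (definably, using $\pi_d$ from Fact~\ref{F:DimTypeDef}, weak CIT via Fact~\ref{T:Poizat}, definability of rotundity and of Kummer genericity via Fact~\ref{F:RotundDef} and Proposition~\ref{P:SimpleDef}, and definability of dimension and of types in $ACF$) into finitely many pieces each of which, over a generic base, is of the special form; the parameters delimiting this decomposition lie in a definable set. (3) Conclude that both the rank and the degree of $\phi(\x,\b')$ are computed by a first-order formula on $\b'$, i.e.\ there is $\theta(\z)\in\tp(\b)$ forcing $\RDM(\phi(\x,\b'))=\RDM(\phi(\x,\b))$.

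For the actual passage from green tuples to arbitrary tuples, I would use Fact~\ref{F:BadField}(e): writing coordinates as $(a+b)(c+d)$ with $a,b,c,d$ green reduces any definable family to a definable family over green parameters modulo an $ACF_0$-definable change of coordinates, and $\RDM$ is preserved under such definable bijections by $\RM$-additivity on the relevant pieces (which holds here because $\RM=\d$ is controlled by $\delta$, additive on self-sufficient extensions). One must check that the ``number of green roots'' contributing to the Morley degree is read off uniformly: this is where Kummer genericity is essential, since for a Kummer generic variety $V$ the variety $\sqrt[l]{V}$ is irreducible (Fact~\ref{F:GaloisKummer}(2)), so the ambiguity in choosing green $l$-th roots contributes a \emph{controlled, definable} multiplicity rather than an unpredictable one — exactly the phenomenon illustrated in Example~\ref{Ex:ChoosingRoots}. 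For non-Kummer-generic pieces, one passes to a suitable finite cover (taking enough roots) on which the relevant varieties become Kummer generic, again definably by Proposition~\ref{P:SimpleDef}, and tracks how the degree multiplies under this cover.

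The main obstacle will be step (2): assembling the definable decomposition of an arbitrary $\phi(\x,\b)$ into special pieces uniformly in $\b$, and in particular verifying that the Morley degree — not just the Morley rank — is definable. Definability of rank alone follows from weak CIT and Evans's nfcp result, but degree requires the finer Kummer-theoretic input of Section~\ref{S:def}; the bookkeeping of how green-root choices multiply across the finitely many $\cd$-maximal strata (and their interaction with the white coordinates via equations like \eqref{Eq:gay}) is the technically delicate part. I expect this to mirror closely the structure of the proof of Lemma~\ref{L:GreenGenNotion}, with the special formulas \eqref{Eq:Special} playing the role of ``degree-one instances'' and the count of their realisations-types giving the Morley degree.
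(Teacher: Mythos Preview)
Your proposal assembles the right ingredients---Kummer genericity (Proposition~\ref{P:SimpleDef}), the type characterisation via Corollary~\ref{C:KummerSameType}, weak CIT, and the reduction to green tuples---but the organisation is more complicated than necessary and contains a conceptual slip about where the Morley degree comes from.

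The paper avoids your step~(2) entirely. Rather than computing $\RDM(\phi(\x,\b))$ for an arbitrary formula, it uses that Morley rank is finite, definable and additive in $T_\mu$ to reduce DMP to the statement that every type $p\in S(M)$ over a model is \emph{good}: contains some $\phi(\x,\b)$ with $\RDM(\phi(\x,\b))=(d,1)$ and such that $\RDM(\phi(\x,\b'))=(d,1)$ for every consistent instance. A one-line claim (goodness transfers along finite-to-one definable covers) then lets one replace $p$ by $q=\tp(\a/M)$ for a Kummer-generic green tuple $\a$ with $M\a\leq\mathfrak{C}$, interalgebraic over $M$ with a realisation of $p$; such $\a$ exists by Fact~\ref{F:FreeAb} together with Fact~\ref{F:BadField}(e), essentially as you indicate.

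For this $q$, Corollary~\ref{C:KummerSameType} says there is a \emph{unique} type of a green tuple which is field-generic in $V=\Loc(\a/M)$ and self-sufficient over $M$. Hence the degree of $V(\x)\wedge\bigwedge_i\UE(x_i)$ exceeds $1$ only because of types supported on proper subvarieties $W\subsetneq V$ with $\delta(W)\geq d$; by Lemma~\ref{L:MaxDelta} each such $W$ lies in a coset of some proper $T\in\mathcal{T}(V)$. Removing such cosets one at a time gives a degree-$1$ formula for $q$, and the conditions ``$V_{\b'}$ Kummer generic with $\delta(V_{\b'})=d$'', ``rank $=d$'', and ``no coset of any $T\in\mathcal{T}(V)$ meets the set in rank $\geq d$'' are all definable in $\b',\overline{c}'$ and jointly force $\RDM=(d,1)$.

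Your sentence ``the Morley degree \ldots equals the number of Galois conjugacy classes coming purely from choices of green roots'' is where the slip occurs: in the Kummer-generic situation there is exactly \emph{one} such class, contributing degree $1$. The excess degree comes from the lower strata (proper subvarieties with $\delta\geq d$), not from root ambiguity; Example~\ref{Ex:ChoosingRoots} is about the \emph{failure} of Kummer genericity, not about counting degrees in the good case. Correspondingly, your plan to handle ``non-Kummer-generic pieces'' by passing to covers and tracking how degree multiplies is unnecessary once you work type-by-type: the original type is replaced once by its Kummer-generic cover, and goodness descends via the finite-cover claim. Your step~(2) as stated---decompose an arbitrary formula into special pieces uniformly in the parameters---is essentially as hard as DMP itself and risks circularity.
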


\begin{proof}

Since Morley rank is finite, definable 
and additive in $T_{\mu}$, to show DMP, it is sufficient to find, for any type $p\in S_m(M)$ over a model 
$M$, a formula $\phi(\x,\b)\in p$ such that 
$\RDM(p)=\RDM(\phi(\x,\b))=(d,1)$ and $\RDM(\phi(\x,\b')=(d,1)$ whenever $\phi(\x,\b')$ 
is consistent. Call such a type $p$ \emph{good}.

\smallskip
\noindent
Claim. \emph{Suppose that $q(\tilde{x})\in S_m(M)$ is a good type which is a finite cover of $p(\x)\in S_n(M)$, i.e.\ 
there is a partial $M$-definable function $f$ with finite fibers such that $f_*(q)=p$. Then, $p$ is good.}
(The proof is left to the reader.)

\smallskip



Now, let $p=\tp(\b/M)\in S_m(M)$, for $M\models T_{\mu}$. Note that there is a finite green tuple $\a'$ which is 
algebraic (in the sense of $T_{\mu}$) over $M\b$ and such that $\b\in\dcl_{\mu}(M\a')$ (since $M':=\acl_{\mu}(M\b)\elres M$ and $\dcl_{\mu}(\UE(M'))=M'$ by Fact \ref{F:Bad_IE} and Fact \ref{F:BadField}). We may even assume that $\a'$ is a green basis 
of a self-sufficient extension of $M$. By Fact \ref{F:FreeAb}, 
there are elements $a_1,\ldots,a_n\in M(\a')$, the field generated by $\a'$ over $M$, such that 
$M^{\times}\langle \a\rangle=\divh(M^{\times}\langle\a'\rangle)\cap M(\a')$, i.e.\ $\a$ is Kummer generic over $K$. 
Replacing $a_i$ by $\zeta(i) a_i$ for some root of unity $\zeta(i)$, we may arrange that $a_i$ is green 
for all $i$.  

We still have $\a\in\acl_{\mu}(M\b)$ and $\b\in\dcl_{\mu}(M\a)$. So by the claim, it is sufficient to show 
that $q=\tp(\a/M)$ is good. Note that $V=\Loc(\a/M)$ is a Kummer generic variety. Thus, by Corollary 
\ref{C:KummerSameType}, one has $\a_1\models q$ 
if and only if the following conditions hold:

\begin{itemize}
\item $\a_1$ is generic (in the field sense) in $V$ over $M$, i.e.\ $|Loc(\a_1/M)=V$;
\item the tuple $\a_1$ consists of green elements;
\item $\d(\a_1/M)=\delta(\a_1/M)=2\dim(V)-n=\delta(V)=d$.
\end{itemize}

Suppose $\RDM\left(V(\x)\wedge\bigwedge_{i=1}^n\UE(x_i)\right)=(d',m')>(d,1)$. Then there is a proper 
subvariety $W$ of $V$ containing a type of maximal Morley rank $d'$. Since $\RM=\d\leq\delta$, $W$ is contained in a coset of some $T\in\mathcal{T}(V)$, $T\neq\Gm^n$, by Lemma \ref{L:MaxDelta}. It will suffice to remove from $V$ (performing an induction) a finite number of such cosets to get a definable set of 
$\RDM$ equal to $(d,1)$, for at each 
such step, either the Morley rank or the Morley degree will drop. After a finite number of steps we 
thus arrive at a formula of the form 
$$\phi(\x,\b,\overline{c})=\bigwedge_{i=1}^n\UE(x_i)\wedge\x\in V_{\b}\setminus\left(\bigcup_{\Gm^n\neq T\in\mathcal{T}}\bigcup_{i=1}^{n_T}\overline{c}_{T,i}\cdot T\right)$$
such that $\RM(\phi(\x,\b,\overline{c}))=(d,1)$ with $q$ as its unique generic type.

The following are definable conditions in the parameters $\b',\overline{c}'$ 
(by Proposition \ref{P:SimpleDef} and Fact \ref{F:BadField}, since Morley rank is definable in an almost strongly minimal theory):

\begin{enumerate}
\item[(*)] $V_{\b'}$ is Kummer generic and $\delta(V_{b'})=d$;
 \item[(**)] $\RM(\phi(\x,\b',\overline{c}'))=d$;
 \item[(***)] for any $T\in\mathcal{T}(V)$ such that $T\neq\Gm^n$, the intersection of $\phi(\x,\b',\overline{c}'))$ 
 with any coset of $T$ is of Morley rank $<d$.
 \end{enumerate}
 
If $(*),(**)$ and $(***)$ are satisfied, then $\RDM(\phi(\x,\b',\overline{c}'))=(d,1)$, showing that 
$q$ is a good type.
\end{proof}

Note that in the previous proof, the conditions $(*),(**)$ and $(***)$ guarantee that when assigning 
the green colour to a generic point $\a'$ of $V_{\b'}$ (over $K'\models T_{\mu}$), we obtain a 
self-sufficient extension of $K'$ which stays in the class $\Cmu$. A priori, it is 
not clear that this is a definable condition in the parameters.  

\begin{Thm}\label{T:Gen_Bad_Field}
The theory $T_{\mu}A$ exists. The $\L_{rings}\cup\{\sigma\}$-reduct of $T_{\mu}A$ 
equals $ACFA_0$.
\end{Thm}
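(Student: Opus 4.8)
The plan is to mirror the proof of Theorem \ref{T:GenGreen} almost verbatim, replacing $T_{\omega}$ by $T_{\mu}$ and using the DMP of $T_{\mu}$ (Theorem \ref{T:Bad_DMP}) in place of a geometric notion of genericity. First I would note that $T_{\mu}$ has finite, definable and additive Morley rank (Fact \ref{F:BadField}(c), together with almost strong minimality from Fact \ref{F:BadField}(e)); combined with the DMP, Example \ref{Ex:AddDMP} applies directly and gives a geometric notion of genericity for $T_{\mu}$ in the sense of Definition \ref{D:GeometricGenericity} --- or one simply invokes Example \ref{Ex:AddDMP} itself. Hence Proposition \ref{P:PartialGenTAExists} (or Example \ref{Ex:AddDMP}) yields that $T_{\mu}A$ exists. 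One subtlety: $T_{\mu}$ is model-complete (Fact \ref{F:BadField}(f)) but may not eliminate quantifiers; as explained in Section \ref{S:GenAuto} this is harmless, since existence of $TA$ only depends on $T$ up to expansion by definitions, and the framework of Section \ref{S:GeometricFranework} is set up for model-complete $T$.

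For the statement about the reduct, I would argue exactly as in the second half of the proof of Theorem \ref{T:GenGreen}. Start with $(K,\UE(K),\sigma)\models T_{\mu}A$ and suppose $(K,\sigma)\subseteq(L,\sigma)\models ACFA_0$. The key point is that $\UE(K)$ is strongly minimal in $K$ (Fact \ref{F:BadField}(c)), hence $\acl_{\mu}(\emptyset)\cap\UE(K)$ is finite --- in fact here one expects $\UE$ to contain no nontrivial algebraic elements over $\emptyset$, since $\d(a/\emptyset)=0$ forces $a$ into the algebraic closure of the empty set in the field sense, i.e.\ $a$ is a root of unity, and a torsion-free divisible group has none. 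So setting $\UE(L):=\UE(K)$ expands $(L,\sigma)$ to a bad field with automorphism, and since no new green points are added and $K\leq L$ as green fields (the predimension of any finitely generated subextension over $K$ can only increase, as $\ldim(\UE)$ does not grow), we get $K\leq L$. Then embed $(L,\UE(L),\sigma)$ self-sufficiently into some $(M,\UE(M),\sigma)\models T_{\mu}A$ with $L\leq M$; this uses that $(\Cmu,\leq)$ has JEP and AP and that its Fra\"iss\'e--Hrushovski limit exists together with the model-completeness of $T_{\mu}$ (Fact \ref{F:BadField}), exactly as in the green case. Since $(K,\UE(K),\sigma)\elex(M,\UE(M),\sigma)$, $(K,\sigma)$ is existentially closed in $(L,\sigma)$, hence a model of $ACFA_0$.

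Conversely, every completion of $ACFA_0$ is realised: for any $\sigma\in\Gal(\Q)$ the bad field with automorphism $(\Q^{alg},\{1\},\sigma)$ --- where $\{1\}$ is the trivial green subgroup, which is a legitimate torsion-free divisible subgroup and makes $\Q^{alg}$ a green field of predimension $0$, self-sufficient in any extension --- embeds self-sufficiently into a model of $T_{\mu}A$, and by Fact \ref{T:CP}(2) the completion of $ACFA_0$ is determined by the isomorphism type of the action of $\sigma$ on $\acl(\emptyset)=\Q^{alg}$. This finishes the identification of the reduct.

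The main obstacle, and the only place where bad fields genuinely differ from green fields, is Theorem \ref{T:Bad_DMP}: everything here is bootstrapped from the DMP, which in turn rests on the definability of Kummer genericity (Proposition \ref{P:SimpleDef}) and Corollary \ref{C:KummerSameType}. Given that theorem, the present proof is a routine transcription of Theorem \ref{T:GenGreen}'s argument. The one point requiring a little care is verifying that $\UE(L):=\UE(K)$ really yields a model in the class $\Cmu$ (not merely in $\C$) once passed to the self-sufficient closure --- but this is handled by the amalgamation set-up of \cite{BHMW07}: any $K\leq L$ with $K\models T_{\mu}$ and $\UE(L)=\UE(K)$ has $\cl_\omega^L(K)=K$, so no collapse condition is violated, and $L$ embeds into a model of $T_{\mu}A$ by the standard extension argument.
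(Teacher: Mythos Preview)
Your approach matches the paper's: existence of $T_{\mu}A$ via DMP and Example \ref{Ex:AddDMP}, then the reduct argument copied from Theorem \ref{T:GenGreen} with the one extra check that $(L,\UE(K))$ remains in $\Cmu$. The paper dispatches that extra check by simply citing \cite[Folgerung 8.3]{BHMW07}, which states exactly that an algebraically closed field extension with no new green points stays in $\Cmu$; your justification (``$\cl_\omega^L(K)=K$, so no collapse condition is violated'') gestures at the right reason but is not a proof --- the $\mu$-bounds concern realisations of certain prealgebraic green types, and one needs to say why purely white transcendental extensions create no such realisations. This is easy but should be said (or cited) rather than asserted.

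There is one actual error, though it is inessential to your argument. You claim that $\acl_\mu(\emptyset)\cap\UE(K)$ is finite, and that $\d(a/\emptyset)=0$ for green $a$ forces $a\in\Q^{alg}$, hence $a$ is a root of unity, hence trivial. This is false: $\d(a/\emptyset)=0$ means $a\in\acl_\mu(\emptyset)=\cl_\omega(\emptyset)$, which is typically much larger than $\Q^{alg}$. Indeed $\UE\cap\acl_\mu(\emptyset)$ is infinite (take green $g_1,g_2$ with $g_1+g_2=1$; then $\delta(g_1,g_2)=0$ so $g_1\in\acl_\mu(\emptyset)$, and the divisible hull of $\langle g_1\rangle$ is an infinite green subgroup of $\acl_\mu(\emptyset)$). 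Fortunately you do not use this claim anywhere --- the reduct argument only needs $K\leq L$ and $(L,\UE(K))\in\Cmu$ --- so just delete that sentence.
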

\begin{proof}
The existence of $T_{\mu}A$  follows from Theorem \ref{T:Bad_DMP}, using \ref{Ex:AddDMP}.

\smallskip

Note that if $(K,\UE(K))$ is a green field from the class $\Cmu$ and $L$ is an 
algebraically closed field containing $K$, then $(L,\UE(K))\in\Cmu$ 
(see \cite[Folgerung 8.3]{BHMW07}). Using this, the argument concerning the reduct to the language of difference fields is the same as in the proof of Theorem \ref{T:GenGreen}.  
\end{proof}
 
 \subsection{Bad pseudofinite fields}
 
 We now give an application to pseudofinite fields, showing that the fixed field of a model of $T_{\mu}A$ is what 
 might be called a `bad pseudofinite field' of characteristic 0.
 
Recall that every pseudofinite field is supersimple of $\SU$-rank 1, with $\SU(\a/K)=\tr(\a/K)$ (see \cite{Hr02} and \cite{Wa00} for pseudo-finite fields and simple 
theories).  

\begin{Cor} \label{C:Psf_Bad}
Let $F'$ be a pseudofinite field of characteristic 0. Then, there is $F\elres F'$ and an infinite 
divisible torsion free subgroup $\UE$ of the multiplicative group of $F$ such that $(F,+,\times,\UE)$ is supersimple of 
$\SU$-rank $2$, with $\UE$ of $\SU$-rank $1$.
\end{Cor}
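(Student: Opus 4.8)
The plan is to realise $F$ as the fixed field $F:=\mathrm{Fix}(\sigma)$ of a suitable model $(M,\UE(M),\sigma)\models T_\mu A$, equipped with $\UE:=\UE(M)\cap F$. By Theorem~\ref{T:Gen_Bad_Field} the $\L_{rings}\cup\{\sigma\}$-reduct of $T_\mu A$ is $ACFA_0$, so $(M,+,\times,\sigma)\models ACFA_0$ and hence $\mathrm{Fix}(\sigma)$ is a pseudofinite field of characteristic $0$. To arrange $F'\preceq F$: put $K_0:=F'\cap\mathbb{Q}^{alg}$; since $\Gal(\mathbb{Q}^{alg}/K_0)$ is procyclic (a quotient of $\Gal(\overline{F'}/F')\cong\widehat{\Z}$) one may pick $\tau\in\Gal(\mathbb{Q}^{alg})$ with $\mathrm{Fix}(\tau)=K_0$ and $\overline{\langle\tau\rangle}=\Gal(\mathbb{Q}^{alg}/K_0)$; by the final argument in the proof of Theorem~\ref{T:Gen_Bad_Field}, $(\mathbb{Q}^{alg},\{1\},\tau)$ embeds self-sufficiently into some $(M,\UE(M),\sigma)\models T_\mu A$ with $\sigma\res\mathbb{Q}^{alg}=\tau$, whence $\mathrm{Fix}(\sigma)\cap\mathbb{Q}^{alg}=K_0$ and so $\mathrm{Fix}(\sigma)\equiv F'$ (pseudofinite fields of characteristic $0$ with the same absolute part are elementarily equivalent). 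Taking $M$ sufficiently saturated makes $\mathrm{Fix}(\sigma)$ a saturated model of $\Th(F')$ of size $\ge|F'|$, so we may assume $F'\preceq F$.

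That $\UE$ is a torsion free subgroup of $F^{\times}$ is clear; it is divisible because for $g\in\UE$ and $n\ge1$ the $n$-th root of $g$ inside $\UE(M)$ is unique ($\UE(M)$ being torsion free) and therefore $\sigma$-fixed, as $\sigma$ permutes the green $n$-th roots of the $\sigma$-fixed element $g$. Infinitude of $\UE$ will fall out of the rank computation.

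Now for the model theory of $F$. By Fact~\ref{F:Bad_IE}, $T_\mu$ eliminates imaginaries and all its algebraically closed sets are models, so by Fact~\ref{T:CP}(4) every completion of $T_\mu A$ eliminates imaginaries and $F=\mathrm{Fix}(\sigma)$ is stably embedded, while by Fact~\ref{T:CP}(3) (as $T_\mu$ is superstable) $T_\mu A$ is supersimple. The structure induced on $F$ is thus supersimple, and — exactly as for $\mathrm{Fix}(\sigma)$ in $ACFA_0$, $\sigma$ acting trivially on $F$ — it is interdefinable with $(F,+,\times,\UE)$; hence $(F,+,\times,\UE)$ is supersimple, with $\SU$-ranks computed inside $T_\mu A$. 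For $a\in F$ and $B\subseteq F$ everything is $\sigma$-fixed, so $\acl_\sigma=\acl_{T_\mu}$ there, and Fact~\ref{T:CP}(3) together with additivity of Morley rank in $T_\mu$ (whence $U$-rank $=\RM$ in $T_\mu$) gives $\SU_{T_\mu A}(a/B)\le\RM_{T_\mu}(a/B)$; as $\RM_{T_\mu}$ of the home sort is $2$ and $\RM_{T_\mu}(\UE)=1$, this bounds the $\SU$-rank of $(F,+,\times,\UE)$ by $2$ and $\SU(\UE)$ by $1$. For the reverse inequalities: $(F,+,\times)$ is pseudofinite with $\SU(x/B)=\tr(x/B)$, so any transcendental $g\in\UE$ over $B$ already has $\SU\ge1$, giving $\SU(\UE)=1$ (in particular $\UE$ is infinite); and for a rank-$2$ type I would take $g_1,g_2\in\UE$ independent generic green over a small $K\prec F$ (such exist, e.g. by embedding a suitable green field with the identity automorphism into a model of $T_\mu A$) and set $a:=g_1+g_2\in F$, which is white and transcendental over $K$. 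The key point is that $g_1$ is one of the \emph{finitely many} green solutions of the formula ``$x\in\UE\wedge a-x\in\UE$'' (for $x$ generic green over $K(a)$, $a-x$ is not green), so $g_1,g_2\in\acl_{T_\mu}(Ka)$; hence, by additivity, $\RM_{T_\mu}(a/K)=2$, and the chain $K\subseteq Kg_1\subseteq Kg_1g_2$ — with parameters from $F$ and with $\SU_{T_\mu A}(a/\cdot)$ dropping through $2,1,0$ — witnesses $\SU_{(F,+,\times,\UE)}(a/K)=2$. Thus $(F,+,\times,\UE)$ is supersimple of $\SU$-rank $2$ with $\UE$ of $\SU$-rank $1$.

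The main obstacle is precisely this last step: identifying the induced structure on $\mathrm{Fix}(\sigma)$ with $(F,+,\times,\UE)$ and then checking that the rank-$2$ type above survives in $(F,+,\times,\UE)$ — the crucial, mildly counterintuitive ingredient being that, unlike in the pure field, the green predicate makes a white sum of green elements algebraic over green ones. The remaining points are routine bookkeeping with Facts~\ref{T:CP}, \ref{F:Bad_IE}, \ref{F:BadField} and standard facts about pseudofinite fields.
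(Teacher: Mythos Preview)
Your overall strategy matches the paper's: realise $F$ as $\mathrm{Fix}(\sigma)$ in a model of $T_{\mu}A$, use Fact~\ref{F:Bad_IE} and Fact~\ref{T:CP}(4) to get stable embeddedness, and bound $\SU$-ranks above by $\RM_{T_\mu}$ via Fact~\ref{T:CP}(3). The construction of $F\succeq F'$, the divisibility of $\UE\cap F$, and the interalgebraicity of $g_1+g_2$ with $(g_1,g_2)$ are all correct and essentially as in the paper.

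The genuine gap is your assertion that the full induced structure on $F$ is \emph{interdefinable} with $(F,+,\times,\UE)$, justified only by analogy with $ACFA_0$. This is not proved, and the analogous $ACFA_0$ statement is itself a non-trivial theorem. Your upper bound $\SU_{T_\mu A}(a/B)\le\RM_{T_\mu}(a/B)$ is really a bound on $\SU$ in the \emph{full} induced structure $\mathrm{Th}(F)$; to transfer it to the reduct $(F,+,\times,\UE)$ you need precisely the inequality you have not established.

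The paper does \emph{not} attempt to prove interdefinability. Instead it works first in $\mathrm{Th}(F)$, showing $\SU_{\mathrm{Th}(F)}(\UE)=1$ and exhibiting a $1$-type of $\SU_{\mathrm{Th}(F)}$-rank $2$ (your $g_1+g_2$ argument), together with the bound $\SU_{\mathrm{Th}(F)}\le\RM_{T_\mu}$. It then observes that $(F,+,\times,\UE)$ is merely a \emph{reduct} of $\mathrm{Th}(F)$ and invokes a short general lemma: if $T'$ is a reduct of a simple theory $T$ and $\SU_T(\pi')<\omega$, then $\SU_{T'}(\pi')\le\SU_T(\pi')$. This gives the upper bounds in $(F,+,\times,\UE)$ without any interdefinability claim. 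The lower bound $\SU(F_{\UE})\ge 2$ is obtained directly from the Lascar inequality applied to the infinite subgroup $\UE$ of infinite index in $F^\times$. Replacing your interdefinability step by this reduct lemma repairs your argument and brings it in line with the paper's.
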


\begin{proof}
Choose $K\models T_{\mu}A$ (sufficiently saturated) such that the fixed field $F$ is an elementary extension 
of $F'$. (It is easy to see that there is $(K,\sigma)\models ACFA_0$ with this property \cite{CH99}; by Theorem 
\ref{T:Gen_Bad_Field}, this is sufficient.) Note that $F$ is stably embedded, by Fact \ref{T:CP}(4) and Fact \ref{F:Bad_IE}. We first show that the full induced structure on $F$ is supersimple of 
$\SU$-rank 2, with $\SU(\UE)=1$. We denote this theory by $\mathrm{Th}(F)$. 
Choose an element $g\in \UE(F)$. Note that $\acl_{\mathrm{Th}(F)}(B)=\acl_{T_{\mu}A}(B)\cap F
=\acl_{\mu}(B)\cap F$ for any $B\subseteq F$ (by Fact \ref{T:CP}(1)). Now assume $g\not\in\acl_{\mu}(\emptyset)$. Then $\d(g)=\RM_{T_{\mu}}(g/\emptyset)=1$. 
For every $B\subseteq F$, the following holds:
$$g\ind^{T_{\mu}}B\Leftrightarrow g\not\in\acl_{\mu}(B)\Leftrightarrow g\not\in\acl_{\mathrm{Th}(F)}(B)\Leftrightarrow g\ind^{\mathrm{Th}(F)}B.$$
So the only forking extensions of $\tp_{\mathrm{Th}(F)}(g)$ are algebraic, from which we 
deduce $\SU_{\mathrm{Th}(F)}(g)=1$, thus $\SU_{\mathrm{Th}(F)}(\UE)=1$. Next we show that there 
is a 1-type in $\mathrm{Th}(F)$ of rank 2. Choose generic 
independent green elements $g_1,g_2$ in $F$, and put $w=g_1+g_2$. Then $\delta(g_1,g_2/w)=0$, so $w$ and $(g_1,g_2)$ are interalgebraic (in $T_{\mu}$, so also in $\mathrm{Th}(F)$). 
By the Lascar inequalities, we compute $\SU_{\mathrm{Th}(F)}(w)=\SU_{\mathrm{Th}(F)}(g_1,g_2)=2$.

On the other hand, using the characterisation of non-forking in Fact \ref{T:CP}(3), by an easy 
induction on Morley rank we show that for any $\a\in F$ and $B\subseteq F$ one has 
$\RM_{T_{\mu}}(\a/B)\geq\SU_{\mathrm{Th}(F)}(\a/B)$.
 
The structure $F_{\UE}=(F,+,\times,\UE)$ is a reduct of the full induced structure on $F$. Moreover, as 
$\UE$ is an infinite definable subgroup of infinite index in the multiplicative group of $F$, it 
follows that $\SU(F_{\UE})\geq2$. We finish the proof using the following general lemma (it is folklore; for convenience, we include a proof).
\end{proof}

\begin{Lem}
Let $T'$ be a reduct of the simple theory $T$, and let $\pi'$ be a partial $T'$-type such that 
$\SU_T(\pi')<\omega$. Then $\SU_{T'}(\pi')\leq\SU_T(\pi')$.
\end{Lem}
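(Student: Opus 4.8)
The statement is the standard "reduct of a simple theory" inequality for $\SU$-rank, and I would prove it by induction on the ordinal $\SU_T(\pi')$, exactly as one does for the analogous fact about Lascar $U$-rank. Fix a big saturated model of $T$ (which is also a model of $T'$, as $T'$ is a reduct). Let $a$ realise $\pi'$ and let $A$ be a set of parameters. The claim to prove is $\SU_{T'}(a/A)\leq\SU_T(a/A)$ whenever the right-hand side is finite; the lemma follows by taking the supremum over completions of $\pi'$.

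\textbf{The inductive step.} By the recursive definition of $\SU$-rank it suffices to show: if $\SU_{T'}(a/A)\geq n+1$, then $\SU_T(a/A)\geq n+1$. Unwinding the definition, $\SU_{T'}(a/A)\geq n+1$ means there is a set $B\supseteq A$ such that $a\nind^{T'}_A B$ and $\SU_{T'}(a/B)\geq n$. The key point is that \emph{forking in the reduct implies forking in the ambient theory}: since every $T'$-formula is a $T$-formula, a $T'$-dividing chain is a $T$-dividing chain, so $a\nind^{T'}_A B$ implies $a\nind^{T}_A B$. (Here one uses that $T$ is simple, so that forking is well-behaved; dividing over $A$ in $T'$ is witnessed by an $A$-indiscernible sequence and a $T'$-formula, and this same data witnesses dividing in $T$.) By the induction hypothesis applied to $a$ over $B$ we get $\SU_T(a/B)\geq\SU_{T'}(a/B)\geq n$. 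Hence there is $B\supseteq A$ with $a\nind^T_A B$ and $\SU_T(a/B)\geq n$, which by definition gives $\SU_T(a/A)\geq n+1$, completing the induction.

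\textbf{Main obstacle.} The only real content is the implication "$T'$-forking $\Rightarrow$ $T$-forking", and the subtlety is that forking, unlike dividing, is defined via disjunctions, so one should argue at the level of dividing: if $\phi'(x,b)$ is a $T'$-formula that divides over $A$ (witnessed by an $A$-indiscernible sequence $(b_i)$ with $\{\phi'(x,b_i)\}$ $k$-inconsistent), then viewing $\phi'$ as a $T$-formula and using that $(b_i)$ is still $A$-indiscernible in $T$, the same sequence witnesses $T$-dividing; then a $T'$-forking formula, being a finite disjunction of $T'$-dividing formulas, is a $T$-forking formula. Since $T$ is simple we may freely pass between forking and dividing on the $T$ side, and the finiteness hypothesis $\SU_T(\pi')<\omega$ guarantees the induction is on a well-ordered set of natural numbers so there are no limit-ordinal complications. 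Everything else is bookkeeping with the Lascar-style recursion, so I would present the argument in essentially the two short paragraphs above.
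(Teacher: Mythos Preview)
Your induction scheme is the same as the paper's, but the justification of the key step contains a genuine error. You assert that an $A$-indiscernible sequence in $T'$ ``is still $A$-indiscernible in $T$''; this is backwards. Since $T$ has \emph{more} formulas than its reduct $T'$, being $T$-indiscernible is the \emph{stronger} condition, and a sequence witnessing $T'$-dividing need not be $T$-indiscernible at all. So the sentence ``a $T'$-dividing chain is a $T$-dividing chain'' is unjustified.

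Worse, the implication $a \nind^{T'}_A B \Rightarrow a \nind^T_A B$ can simply fail over an arbitrary base. Take $T=\Th(\N,s,0)$ (which is $\omega$-stable, hence simple) and $T'$ its reduct to pure equality. The element $0$ lies in $\dcl_T(\emptyset)$, so $\SU_T(0/\emptyset)=0$ and $0\ind^T_\emptyset B$ for every $B$; yet in $T'$ one has $\SU_{T'}(0/\emptyset)=1$ and $0\nind^{T'}_\emptyset\{0\}$. In particular the pointwise inequality $\SU_{T'}(a/A)\leq\SU_T(a/A)$ that you announce as ``the claim to prove'' is false, so the reduction ``prove it for complete types and take the supremum'' does not work as stated.

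The paper repairs this by first passing (via a $T'$-non-forking extension) to a base which is a model $M\models T$, and then reversing the order in which the two theories are invoked. Given $a\nind^{T'}_M B$, one starts on the $T$-side: choose a $T$-coheir sequence $(B_i)$ in $\tp_T(B/M)$. This sequence is $T$-indiscernible over $M$ by construction, and it is automatically a $T'$-Morley sequence (a $T$-coheir is a $T'$-coheir). Now one uses Kim's lemma in the simple theory $T'$: $T'$-dividing over $M$ is witnessed by \emph{any} $T'$-Morley sequence, so $\{\phi'(x,B_i)\}$ is inconsistent for some $\phi'\in\tp_{T'}(a/MB)$. Since $(B_i)$ was chosen $T$-indiscernible, this exhibits $T$-dividing of $\phi'(x,B)$ over $M$, whence $a\nind^T_M B$. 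The induction then concludes as you wrote; note that the witnessing formula is a $T'$-formula, which is what makes the argument compatible with applying the inductive hypothesis to the partial $T'$-type $\tp_{T'}(a/B)$ rather than to the specific element $a$.
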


\begin{proof}
We argue by induction on $\SU_{T'}(\pi')=n\in\N$, the case $n=0$ being trivial. Suppose 
$\SU_{T'}(\pi')=n+1$, where $\pi'$ is a partial type over $A$. Taking a $T'$-non-forking extension if 
necessary, we may suppose $A=M\models T$. Let $B\supset M$ and $a\models\pi'$ such 
that $\SU_{T'}(a/B)=n$. By induction, we know that $\SU_T(a/B)\geq n$. Moreover, 
since $a\nind^{T'}_M B$ is witnessed by any $T'$-Morley sequence in $\tp_{T'}(B/M)$ (see e.g.\ \cite[Thm 2.4.7]{Wa00}), 
in particular by any $T$-coheir sequence in $\tp_T(B/M)$, we may deduce $a\nind^{T}_M B$ from 
$a\nind^{T'}_M B$.
\end{proof}

\begin{Pb}
Is it possible to obtain the green pseudofinite field $(F,\UE)$ of rank 2 (or the one of infinite rank) as an ultraproduct of coloured finite fields $(\F_q,N_q)$? 
\end{Pb}

\section{Other Hrushovski amalgams}\label{S:Other}
We briefly review Hrushovski's amalgamation method (see \cite{HH06} for a detailed account of this method). It is a variation of Fra\"{i}ss\'e's original 
method, and a powerful tool to construct stable structures with prescribed pregeometry. 

Let $\mathcal{C}$ be a class of $\L$-structures, $\mathcal{C}^{fin}\subseteq\mathcal{C}$ the class of finite (or `finitely generated' in some sense) structures in $\mathcal{C}$, and 
let  $\delta:\mathcal{C}^{fin}\rightarrow\Z$ be a \emph{predimension function} satisfying some natural conditions. 
For $A\subseteq B$ in $\mathcal{C}^{fin}$ put 
$\delta(B/A):=\delta(B)-\delta(A)$ (this definition may be extended to infinite $A$, as long as $B$ is finitely generated over 
$A$). The structure $A$ is said to be \emph{self-sufficient} in $B$ (denoted by $A\leq B$) if $\delta(B'/A)\geq0$ for any 
$A\subseteq B'\subseteq B$ with $B'$ finitely generated over $A$. Let $\C=\{M\in\C\,\mid\,\emptyset\leq M\}$ and consider the class $(\C,\leq)$. In all examples 
we treat, $\C$ is an elementary class, $\C^{fin}$ is countable up to isomorphism, and $(\C,\leq)$ has AP and JEP. 
So there is a unique countable structure $M_{\omega}$ in $\C$ which is homogeneous with respect to $(\C,\leq)$, the 
\emph{Fra\"{i}ss\'e-Hrushovski limit} 
of $(\C^{fin},\leq)$. In order to establish the desired properties for 
$T_{\omega}=\Th(M_{\omega})$ one has to show that $M_{\omega}$ is saturated.

The theory $T_{\omega}$ obtained in this way is usually of infinite (Morley) rank, and a more intricate second step --- the 
so-called \emph{collapse} --- is needed to obtain a theory of finite Morley rank where the rank is given by the \emph{dimension} (i.e.\ the `eventual predimension') that comes out of the construction,
$\d(A):=\min\{\delta(A')\,\mid\,A\subseteq A'\subseteq K\}$. The rough idea is to bound uniformly the number of realisations of 
types in $T_{\omega}$ of dimension 0. Technically, this is done by choosing families of strongly minimal sets in 
$T_{\omega}$ which coordinatise all such types of dimension 0 and to associate to any such family $\mathcal{F}$ a natural number $\mu(\mathcal{F})$. One obtains an elementary subclass $\C^{\mu}\subseteq\C$. The most delicate 
parts of the construction are to establish that $(\C^{\mu},\leq)$ has AP, and the fact that the Fra\"{i}ss\'e-Hrushovski limit 
$M_{\mu}$ of  the finite structures in $(\C^{\mu},\leq)$ is saturated. All this is analogous to the construction of green and 
bad fields which was outlined in Section \ref{S:delta}. 

A famous instance of the aforementioned amalgamation method is Hrushovski's \emph{fusion construction}, where 
two arbitrary strongly minimal theories (with DMP) are fused into a single strongly minimal theory (\cite{Hr92}, see also 
\cite{HH06} for a detailed exposition of the uncollapsed fusion). For $i=1,2$, let $T_i$ be strongly minimal $\L_i$-theories 
with DMP. We may assume that $\L_1$ and $\L_2$ are disjoint relational languages and that $T_i$ has quantifier elimination. 
For $\L:=\L_1\cup\L_2$, consider the class $\mathcal{C}$ of models of the $\L$-theory $T_1^{\forall}\cup T_2^{\forall}$, and 
for finite $A\in\mathcal{C}$ put $\delta(A)=\d_1(A)+\d_2(A)-\!\mid \!A\!\mid$, where $\d_i(A)$ is Morley rank in the sense of $T_i$. The 
above techniques apply. The theory $T_{\omega}$ is called the \emph{free fusion} of $T_1$ and $T_2$ (over equality); the 
desired strongly minimal fusion is given by $T_{\mu}$.

\begin{Fact}\label{F:FusionUseful}
Let $T_{\omega}$ be the free fusion of the strongly minimal theories $T_1$ and $T_2$. 

\begin{enumerate}
\item $T_{\omega}$ is $\omega$-stable.
\item Let $A$ and $A'$ be self-sufficient subsets of $\mathfrak{C}\models T_{\omega}$. Then $\tp_{\omega}(A)=\tp_{\omega}(A')$ if and 
only if $\tp_{T_i}(A)=\tp_{T_i}(A')$ for $i=1,2$.
\item Let $K\elex\mathfrak{C}$ and $\a\in\mathfrak{C}$ be a finite tuple. There is some finite $\hat{a}\supseteq\a$ 
such that $K\hat{a}\leq\mathfrak{C}$.
\item Let $\x=(x_0,\ldots,x_{n-1})$, $0\leq d\leq n$ and $\z$ an arbitrary tuple of variables. Then there is a partial type 
$\pi_d(\x,\z)$ such that for any $\a,\b$ one has $\models\pi_d(\a,\b)$ if and only if $\d(\a/\b)\geq d$.
\end{enumerate}
\end{Fact}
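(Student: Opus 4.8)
The plan is to derive all four items from the standard machinery of Hrushovski's amalgamation applied to the free fusion (see \cite{Hr92}, and \cite{HH06} for a detailed exposition): the substantial work lies in that construction, so I shall cite what it provides. Since $T_1,T_2$ are strongly minimal with DMP, it yields in particular that $(\C,\leq)$ has AP and JEP, that $\C^{fin}$ is countable up to isomorphism, that the Fra\"{i}ss\'e--Hrushovski limit $M_\omega$ is $\aleph_0$-saturated with $\Th(M_\omega)=T_\omega$, and that in a monster model $\mathfrak C\models T_\omega$ the type $\tp_\omega(\bar a/B)$ is determined by the $\L$-isomorphism type of $\cl_\omega(B\bar a)$ over $B$ (with $\bar a$ mapped to its image). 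I shall also use that, by submodularity of $\delta$, $\cl_\omega(B\bar a)$ is generated over $B$ by a finite tuple whenever $\bar a$ is finite; equivalently, $\d(\bar a/B):=\inf\{\delta(BB'/B)\mid \bar a\subseteq B'\subseteq\mathfrak C,\ B'\text{ finite}\}$ is finite and attained.

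I treat (3) first. Pick a finite $\hat a\supseteq\bar a$ with $\delta(K\hat a/K)=\d(\bar a/K)$. As $\delta$ is additive in towers --- because the $\d_i$ are additive for the strongly minimal $T_i$, and cardinality is additive --- one gets, for every finite $\bar e$, that $\delta(K\hat a\bar e/K\hat a)=\delta(K\hat a\bar e/K)-\delta(K\hat a/K)\geq\d(\bar a/K)-\delta(K\hat a/K)=0$, so $K\hat a\leq\mathfrak C$. For (2): the forward direction is immediate, each $T_i$ being (the theory of) an $\L_i$-reduct of $T_\omega$; conversely, if $\tp_{T_i}(A)=\tp_{T_i}(A')$ for $i=1,2$, then, since $T_i$ has quantifier elimination in the relational language $\L_i$ and $\L=\L_1\cup\L_2$ is their disjoint union, matching the tuples gives a quantifier-free $\L$-isomorphism $A\to A'$, which is $\leq$-respecting as $A,A'\leq\mathfrak C$; a back-and-forth over the saturated $\mathfrak C$ --- extending one element at a time via a finite generating tuple of the relevant self-sufficient closure (item (3)), amalgamation through AP of $(\C,\leq)$, and realisation of the resulting self-sufficient extension inside $\mathfrak C$ --- extends it to an automorphism of $\mathfrak C$ carrying $A$ to $A'$, whence $\tp_\omega(A)=\tp_\omega(A')$. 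For (1): over a countable $K\elex\mathfrak C$, the characterisation of types together with finite generation of closures shows that $\tp_\omega(\bar a/K)$ is determined by the quantifier-free $\L$-type over $K$ of a finite tuple generating $\cl_\omega(K\bar a)$, hence by a pair of a $T_1$-type and a $T_2$-type over $K$ (by QE); since $T_1,T_2$ are $\omega$-stable there are only countably many such, so $|S_n(K)|\leq\aleph_0$, and therefore $T_\omega$ is $\omega$-stable (every countable set embeds in a countable model).

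Finally, (4) follows along the lines of Fact \ref{F:DimTypeDef} and \cite[Lemma 10.3(2)]{BHMW07}. Because the $\d_i$ are uniformly finite (indeed definable) in the strongly minimal $T_i$, the predimension $\delta(\bar b\bar e/\bar b)=\d_1(\bar e/\bar b)+\d_2(\bar e/\bar b)-|\bar e\setminus\bar b|$ of a finite configuration is recognised, up to an inequality, by $\L$-formulas uniformly in the parameters. One then expresses ``$\d(\bar a/\bar b)\geq d$'' as the partial type asserting that $\delta$, measured relative to $\bar b$, never drops below $d$ on a finite extension of $\bar a\bar b$: a conjunction, over the length $n$ of the witnessing tuple, of sentences $\forall\z_n\,\psi_n(\x,\z,\z_n)$ with $\psi_n$ encoding the relevant inequality. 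The delicate point --- and the step I expect to cost the most work --- is that $\d(\bar a/\bar b)$ is computed relative to the self-sufficient closure $\cl_\omega(\bar b)$ rather than to $\bar b$ itself, so the formulation must also range over finite approximations to $\cl_\omega(\bar b)$; this is handled exactly as in \cite{BHMW07}. The remaining items are short once the underlying construction --- existence and $\aleph_0$-saturation of $M_\omega$, and finite generation of self-sufficient closures --- is taken for granted.
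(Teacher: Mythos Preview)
Your proposal is correct and in fact considerably more detailed than the paper's own treatment: the paper simply records that items (1)--(3) are proved in \cite{HH06} and that (4) ``is an easy consequence of definability of Morley rank in strongly minimal theories.'' Your sketches for (1)--(3) reproduce the standard arguments behind that citation (finite attainment of the infimum for (3), back-and-forth over the saturated limit for (2), and type-counting via (2) and (3) for (1)), and they are sound.

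On (4), your route and the paper's are the same in substance; the only difference is emphasis. You flag the passage from $\bar b$ to $\cl_\omega(\bar b)$ as ``the delicate point'' and invoke the analogue in \cite{BHMW07}, whereas the paper treats the whole item as routine. In the fusion setting the paper's optimism is justified: since Morley rank of \emph{formulas} is definable in each strongly minimal $T_i$, the condition $\d_i(\bar a/\bar b)\geq k$ is expressed by the partial type $\bigwedge_{\phi}\bigl(\phi(\bar a,\bar b)\rightarrow \RM_{T_i}(\phi(\bar x,\bar b))\geq k\bigr)$; the universal quantification over the auxiliary tuple $\bar e$ in your formulation then converts the remaining finite disjunctions into a conjunction of formulas, and the ``$\exists\bar f$'' witnessing $\d(\bar b)$ is handled the same way. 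So your caution is not misplaced, but the step is lighter here than in the green case you cite. One small remark: your appeal to \cite[Lemma 10.3(2)]{BHMW07} is by analogy rather than direct citation (that lemma concerns the green predimension), so it would be cleaner to spell out the two-line reduction above instead.
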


\begin{proof}
The first three items are proved in \cite{HH06}, and the last part is an easy consequence of definability 
of Morley rank in strongly minimal theories.
\end{proof}

\begin{Thm}\label{P:OtherAmalgams}
For the following theories obtained by Hrushovski's amalgamation method without collapse, all $\omega$-stable 
of infinite rank, the generic automorphism is axiomatisable:  
\begin{enumerate}
\item The \emph{ab initio} construction \cite{Hr93}.
\item The \emph{free fusion} of two strongly minimal theories $T_1$ and $T_2$ , where both $T_1$ and $T_2$ have DMP 
\cite{Hr92} (see also \cite{HH06}). 
\item The free fusion of two strongly minimal theories $T_1$ and $T_2$ \emph{over a common subtheory} $T_0$, where both $T_1$ and $T_2$ have DMP and $T_0$ is $\omega$-categorical, modular and satisfies $\acl_{T_0}=\dcl_{T_0}$ (e.g. for 
$T_0$ the theory of an infinite vector space over some finite field) \cite{HH06}. 
\item The black fields of Poizat in all characteristics \cite{Po99}.
\item The red fields of Poizat in positive characteristic \cite{Po01}.
\item The theory of the generic plane curve over an algebraically closed field constructed in \cite{CHKP02}).
\end{enumerate}
\end{Thm}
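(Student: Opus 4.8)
The strategy in each case is to exhibit a geometric notion of genericity in the sense of Definition~\ref{D:GeometricGenericity} and then invoke Proposition~\ref{P:PartialGenTAExists}. The template is furnished by the treatment of $T_\omega$ in Lemma~\ref{L:GreenGenNotion} and by Examples~\ref{R:GeometricExamples}, but since all six theories listed are $\omega$-stable of infinite rank (so not covered by the additive-finite-rank case of Example~\ref{Ex:AddDMP}), one needs the genuine framework. The key structural input one always has is a predimension $\delta$, an associated dimension $\d$, a self-sufficient closure $\cl_\omega$, and a characterisation of types over self-sufficient sets in terms of the isomorphism type of the self-sufficient closure (the analogues of Fact~\ref{F:PoizatGreen}(c),(d)).

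First, for each theory I would define ``nice'' to mean: the type of a self-sufficient tuple $\tilde a$ over a model $K$, together with the data recording $\cl_\omega(K\tilde a)$ as a finitely generated extension. In the \emph{ab initio} case (1) and the fusions (2),(3) over a modular $\omega$-categorical $T_0$ with $\acl_{T_0}=\dcl_{T_0}$, there is no Kummer-theoretic obstruction: over a self-sufficient algebraically closed base, the type of a self-sufficient tuple is \emph{determined} by its quantifier-free type in the relevant reducts (Fact~\ref{F:FusionUseful}(2) for the fusion, the combinatorial geometry of the predimension for ab initio). Hence every self-sufficient type over a model is already nice, property (3) (enough nice types) follows from the ``self-sufficient closure of a finitely generated extension is finitely generated'' statement (Fact~\ref{F:FusionUseful}(3) and its analogues), and (1) (invariance) and (2) (coherence, equivalently: the generic extension of a nice formula is the non-forking extension) are immediate from $\omega$-stability. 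The substantive point is property (4), definability of generic projections; this is exactly where the DMP of $T_1$, $T_2$ (resp.\ of the ambient strongly minimal machinery in ab initio) is used, in the same way that definability of Morley rank and degree is used in Example~\ref{Ex:AddDMP}: one combines definability of $\d$ (Fact~\ref{F:FusionUseful}(4)) with the DMP to express, in the parameters, that a projection of varieties stays generic and preserves the relevant multiplicities. For the black fields (4) (which are expansions of $ACF$ by a new \emph{set}, no multiplicative structure) and the red fields in positive characteristic (5) and the generic plane curve (6), the same scheme applies: there is no Kummer phenomenon because one is either adding an unstructured set or working in positive characteristic where the relevant purity issues disappear (or, in the generic-curve case, the configuration is handled by ordinary definability in $ACF$), so again all self-sufficient types are nice and property (4) reduces to definability of $\d$ plus the DMP-style control of multiplicities in $ACF$.

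So the proof would proceed: (a) recall for each theory the relevant model-complete expansion in which it eliminates quantifiers (Morleyisation), and the characterisation-of-types fact; (b) define $R_\g$ as genericity of a self-sufficient tuple in a variety/strongly-minimal-locus, with nice $=$ self-sufficient; (c) verify (1),(2) from $\omega$-stability and the fact that in a stable theory the automorphism-invariant extension of a stationary type is the non-forking one (as remarked after Definition~\ref{D:GeometricGenericity}); (d) verify (3) from finite generation of self-sufficient closures; (e) verify (4) by the DMP/definability-of-$\d$ argument, citing Fact~\ref{F:FusionUseful}(4) in the fusion cases and the definability of Morley rank and degree in $ACF$ (any characteristic) in the others; (f) conclude by Proposition~\ref{P:PartialGenTAExists}. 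For case (3) one additionally notes that the hypotheses on $T_0$ ($\omega$-categorical, modular, $\acl=\dcl$) are precisely what is needed to keep the characterisation of types over self-sufficient sets clean and to keep property (4) first-order.

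\textbf{Main obstacle.} The delicate point is property (4) in the fusion cases, and specifically ensuring that ``the generic type of $\psi(\x,\b')$ projects onto the generic type of $\phi(\x_1,\b_1')$'' is a \emph{first-order} condition on $\b'\b_1'$. This requires that not merely $\d$ but the finer multiplicity data governing nice types be definable in families, which is exactly the role played by the DMP of $T_1$ and $T_2$; without DMP one only controls $\d$ (dimension) and not ``multiplicities'', just as in the green-field case weak CIT alone was insufficient and one needed definability of Kummer genericity (Proposition~\ref{P:SimpleDef}). For the theories (1),(4),(5),(6) the analogous obstacle is milder, being subsumed by definability of Morley rank and degree in $ACF$, which is classical; the genuine content is really just checking that the self-sufficiency predicate $A\le\mathfrak C$ can be tracked definably through projections, which is handled by the partial type $\pi_d$ of Fact~\ref{F:DimTypeDef} / Fact~\ref{F:FusionUseful}(4) exactly as in the proof of Lemma~\ref{L:GreenGenNotion}.
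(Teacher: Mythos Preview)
Your proposal is correct and follows essentially the same approach as the paper: the paper likewise exhibits a geometric notion of genericity and applies Proposition~\ref{P:PartialGenTAExists}, treating only case (2) in detail and declaring the others similar. The paper's special types are exactly your ``self-sufficient types over a model'' (tuples $\bar a$ with $K\bar a\leq\mathfrak{C}$), its special formulas are conjunctions $\phi_1\wedge\phi_2\wedge\phi_d$ with each $\phi_i$ a degree-$1$ $\L_i$-formula obtained via DMP, and property~(4) is verified by two definable conditions: $\L_i$-genericity of the projection in each $T_i$, plus a purely numerical inequality $k_1^J+k_2^J-|J|\geq k_1^{I'}+k_2^{I'}-|I'|$ encoding relative self-sufficiency of the projected subtuple (this last point is a fixed combinatorial fact about the pair of special formulas rather than something read off from $\pi_d$, a minor sharpening of your sketch).
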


\begin{proof}
We give the argument for (2), the other cases being similar. So let $T_{\omega}$ be the free fusion of two strongly 
minimal theories $T_1$ and $T_2$ having DMP. We will exhibit a geometric notion of genericity and apply Proposition 
\ref{P:PartialGenTAExists}. The construction is parallel to the one given in Lemma \ref{L:GreenGenNotion}, although the 
definabiliy problems we encountered in the case of Poizat's green fields do not arise in the context of the free fusion.

\smallskip

Let $K\elex\mathfrak{C}$ and let $\a\in\mathfrak{C}$ be a finite tuple. Then $\tp_{\omega}(\a/K)$ is called \emph{special} if 
$K\a\leq\mathfrak{C}$. Now let $p(\x)=\tp_{\omega}(\a/K)$ be special. For convenience we will assume that $\a=(a_0,\ldots,a_{n-1})$ enumerates 
$K\a\setminus K$ (without repetitions). By Fact \ref{F:FusionUseful}, $p$ is determined by $p_1=\tp_1(\a/K)$ and 
$p_2=\tp_2(\a/K)$. 
For $I\subseteq \{0,\ldots, n-1\}={\bf n}$, let $k^I_i:=\RM_{T_i}(a_I/K)$. Then (by the assumptions) the following constraints are satisfied:
\begin{eqnarray}
\label{Eq:nonalg}
k^I_1>0\text{ and }k^{I}_2>0\text{ whenever }I\neq\emptyset,\\
\label{Eq:nonneg}
k^{I}_1+k^{I}_2-\!\mid\! I\!\mid\geq0\text{ for all }I\subseteq{\bf n},\\
\label{Eq:DimPredim}
d(\a/K)=\delta(\a/K)=k^{\bf n}_1+k^{\bf n}_2-n.
\end{eqnarray}

We choose $\L_i$-formuals $\phi_i(\x,\z_i)$ and $\b_i\in K$ such that
\begin{itemize} 
 \item[(i)] $p_i$ is the unique $\L_i$-generic type in $\phi_i(\x,\b_i)$ over $K$ (for $i=1,2$),
 \item[(ii)] the formulas $\phi_i(\x,\z_i)$ avoid all diagonals,
\item[(iii)] if $\phi_i(\x,\b')\neq\emptyset$, then this is a formula of Morley degree 1,
\item[(iv)] if $\a'$ is $\L_i$-generic in $\phi_i(\x,\b_i')$ over $K'$ (where $\b_i'\in K'$), then 
$\RM_{T_i}(a'_I/K')=k^I_i$ for all $I\subseteq{\bf n}$.
\end{itemize}
Using DMP in $T_i$, it is easy to see that formulas of the form $\phi_i(\x,\b_i)$ exist and are dense in $p_i$.

Put $d=d(\a/K)=k^{\bf n}_1+k^{\bf n}_2-n$. A \emph{special} formula is a formula of the form 
$$\phi(\x,\z)=\phi_1(\x,\z_1)\wedge\phi_2(\x,\z_2)\wedge\phi_d(\x,\z),$$
where $\z\supseteq\z_1,\z_2$ is some tuple of variables, $\phi_d(\x,\z)$ a formula from $\pi_d(\x,\z)$ 
(see Fact \ref{F:FusionUseful}) and $\phi_1,\phi_2$ are as above, satisfying (i-iv). 

If $p(\x)=\tp_{\omega}(\a/K)$ is special with $\d(\a/K)=d$, then 
$p_1(\x)\cup p_2(\x)\cup\pi_d(\x,K)\vdash p(\x)$, where $p_i=p\res_{\L_i}$. It follows that (instances of) special formulas 
are dense in any special type.

\smallskip
\noindent
Claim. \emph{Let $\phi(\x,\z)=\phi_1(\x,\z_1)\wedge\phi_2(\x,\z_2)\wedge\phi_d(\x,\z)$ be a special formula and 
$\b'\in K'\models T_{\omega}$ such that 
$\exists\x\phi_1(\x,\b'_1)\wedge\exists\x\phi_2(\x,\b'_2)$. Then there is a unique 
special type $p\in S_n(K')$ such that $p_i=p\res_{\L_{i}}$ is generic in $\phi_i(\x,\b_i')$ for $i=1,2$.}
\smallskip

Let $p_i$ be the (by (iii) unique) generic $\L_i$-type in $\phi_i(\x,\b_i')$. Note that by (ii) the types $p_1$ and $p_2$ agree 
on their reduct to mere equality. By \cite[Lemma 3.10]{HH06} and \eqref{Eq:nonneg} 
there is $\a'\models p_1\cup p_2$ such that $K'\a'\leq\mathfrak{C}$. Then $\tp(\a'/K')$ is special and uniquely determined 
by these data by Fact \ref{F:FusionUseful}(2). From $K\a'\leq\mathfrak{C}$ we deduce that $\d(\a'/K')=\delta(\a'/K')= k^{\bf n}_1+k^{\bf n}_2-n=d$, so $\models\pi_d(\a',\b')$ and in particular $\models\phi(\a',\b')$. This proves the claim.

\smallskip

We now define a notion of genericity $R_{\g}$ for special types and formulas. We say that the special type $p(\x)\in S(K)$ is 
\emph{generic} in the special formula $\phi(\x,\b)=\phi_1(\x,\b_1)\wedge\phi_2(\x,\b_2)\wedge\phi_d(\x,\b)$ if 
$p_i=p\res_{\L_i}$ is generic in $\phi_i(\x,\b_i)$ for $i=1,2$. 

It follows from the claim that the special types / formulas 
are precisely the nice types / formulas with respect to $R_{\g}$. Clearly, $R_{\g}$ is invariant and coherent. Moreover, by Fact 
\ref{F:FusionUseful}(3), there are enough nice types. In order to prove that $R_{\g}$ is geometric, it remains to show property 
(4) from Definition \ref{D:GeometricGenericity}, i.e.\ the definability of generic projections.

Let $\x=\x'{\ensuremath{\mathaccent\cdot\cup}}\x''$, $\x'=x_{I'}$ for $I'\subseteq{\bf n}$, and let $\phi'(\x',\z')=\phi'_1(\x',\z'_1)\wedge\phi'_2(\x',\z'_2)\wedge\phi'_{d'}(\x',\z')$ and $\phi(\x,\z)=\phi_1(\x,\z_1)\wedge\phi_2(\x,\z_2)\wedge\phi_d(\x,\z)$ be special formulas, where the integers $k^{I}_i$ are associated with $\phi(\x,\z)$). Let $\b,\b'\inÊK\models 
T_{\omega}$ be such that both $\phi(\x,\b)$ and $\phi'(\x',\b')$ are non-empty. Clearly, the projection of $\phi(\x,\b)$ onto 
the $\x'$-variables is generic in $\phi'(\x',,\b')$ if and only if for any generic (over $K$) $\a\models\phi(\x,\b)$, the tuple 
$\a'=a_{I'}$ is generic in $\phi'(\x',\b')$ over $K$, i.e.\ if $\a'$ is $\L_i$-generic in $\phi'_i(\x',\b_i')$ over $K$ for $i=1,2$ and 
$K\a'\leq\mathfrak{C}$, or equivalently $K\a'\leq K\a$. This is the case if and only if the following two definable properties 
hold (note that the second one is either always or never satisfied for a given pair of special formulas):
\begin{itemize}
\item $\exists \x''\phi_i(\x'\x'',\b_i)$ is $\L_i$-generic in $\phi_i'(\x',\b_i')$ for $i=1,2$;
\item $k_1^J+k_2^J-\!\mid\! J\!\mid \geq k_1^{I'}+k_2^{I'}-\!\mid \!I'\!\mid$ for all $I'\subseteq J\subseteq{\bf n}$.\hfill\qedhere
\end{itemize}
\end{proof}

It is known that all the theories from Theorem \ref{P:OtherAmalgams} may be collapsed onto theories of finite rank. Using arguments which 
are similar (albeit much simpler) to the proof of Theorem \ref{T:Bad_DMP}, one obtains the following result.

\begin{Thm}
The collapsed versions of all the theories from Theorem \ref{P:OtherAmalgams} have finite and additive Morley rank with DMP. 
In particular, the generic automorphism is axiomatisable in these collapsed theories using geometric axioms as in \ref{Ex:AddDMP}.\qed
\end{Thm}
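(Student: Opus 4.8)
The ``in particular'' clause is a formal consequence of the first assertion: by Example~\ref{Ex:AddDMP}, any theory of finite additive Morley rank with DMP has an axiomatisable generic automorphism, the existentially closed models being characterised by the geometric condition $(*)$ there. So the plan is, for each theory $T_\mu$ obtained by collapsing one of the theories listed in Theorem~\ref{P:OtherAmalgams}, to establish: (i) $\RM$ is finite; (ii) $\RM$ is additive and equals the dimension $\d$ produced by the construction; and (iii) $T_\mu$ has DMP. Item (i) is the raison d'\^etre of the collapse. Item (ii) is standard: the identity $\RM=\d$ is part of the output of the collapse (as in Fact~\ref{F:BadField}(d)), and $\d$ is additive because the predimension $\delta$ is, together with the behaviour of self-sufficient closure --- from $\cl_\omega(C)\subseteq\cl_\omega(C\b)\subseteq\cl_\omega(C\b\a)$ and additivity of $\delta$ one reads off $\d(\a\b/C)=\d(\a/\b C)+\d(\b/C)$. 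Hence the work is in (iii), and the plan is to rerun the proof of Theorem~\ref{T:Bad_DMP} in this more general but, crucially, easier setting.

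Fix $T_\mu$ and a type $p=\tp(\b/M)$ with $M\models T_\mu$. As in the proof of Theorem~\ref{T:Bad_DMP}, it suffices to find $\phi(\x,\b)\in p$ with $\RDM(\phi(\x,\b))=\RDM(p)=(d,1)$ whose rank and multiplicity are preserved along a suitable definable set of parameters; call such $p$ \emph{good}. The finite-cover Claim from that proof, valid in any theory of finite additive Morley rank, lets us replace $p$ by $\tp(\a/M)$ for any tuple $\a$ interalgebraic with $\b$ over $M$; so, since the algebraic closure in $T_\mu$ coincides with the (finitely generated) self-sufficient closure, we may take $\a$ to enumerate a self-sufficient set $M\a\leq\mathfrak{C}$ interalgebraic with $M\b$ over $M$. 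By the characterisation of types over self-sufficient sets --- Fact~\ref{F:FusionUseful}(2) for the fusion, and its analogues for the \emph{ab initio}, black, red, and generic-curve constructions --- the type $\tp(\a/M)$ is determined by the locus of $\a$ over $M$ (in the appropriate base-theoretic sense), the predicate/colour data of $\a$, the condition $M\a\leq\mathfrak{C}$, and the finitely many $\mu$-inequalities placing the generated structure in $\Cmu$. One then writes a special-type-style formula $\phi(\x,\b)$ cutting out exactly these: the locus of $\a$ over $M$ with multiplicity $1$ (here the code sets being strongly minimal means no further data --- no choice of ``roots'' --- is needed, which is precisely why the green/bad-field difficulty does not arise); the predicate/colour data; the dimension condition $\d(\x/\b)\geq d$ taken from $\pi_d$ (Fact~\ref{F:FusionUseful}(4) and its analogues); and the $\mu$-inequalities. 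Each of these is a definable condition on the parameter --- the locus and colour data at once, the dimension condition by Fact~\ref{F:FusionUseful}(4), and self-sufficiency together with the $\mu$-inequalities by definability of Morley rank in the base theories combined with the uniformity results (definability of rank, finiteness of the coordinatising families of strongly minimal sets) built into each collapse. On the definable set of parameters $\b'$ where all these hold, $\phi(\x,\b')$ has $\RDM=(d,1)$ with the corresponding special type as its unique generic type; thus $p$ is good, and DMP follows.

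The main obstacle is the definability heart of (iii): that ``assigning the predicate (or colour) to a generic point of the locus $V_{\b'}$ yields a self-sufficient extension lying in $\Cmu$'' is a definable condition in $\b'$, the role played in the proof of Theorem~\ref{T:Bad_DMP} by Proposition~\ref{P:SimpleDef}. In the present cases this reduces to definability of the relevant predimension inequalities, which follows from definability of Morley rank (or dimension) in the base theories together with the finiteness/uniformity statements already available in each construction; there is no Kummer-theoretic phenomenon and no choice of roots, so the argument is genuinely lighter than for the bad field. Accordingly I would carry out (iii) in full once --- for the free fusion, in parallel with Lemma~\ref{L:GreenGenNotion} and Theorem~\ref{T:Bad_DMP} --- and observe that the remaining cases are formally identical, differing only in which base-theoretic definability and uniformity inputs are invoked.
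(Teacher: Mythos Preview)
Your proposal is correct and matches what the paper intends. Note that the paper does not actually give a proof of this theorem: it is stated with a \qed\ immediately after the sentence ``Using arguments which are similar (albeit much simpler) to the proof of Theorem~\ref{T:Bad_DMP}, one obtains the following result.'' Your write-up is thus a faithful expansion of this hint --- you rerun the good-type argument from Theorem~\ref{T:Bad_DMP}, use the finite-cover reduction to pass to a self-sufficient tuple, and replace the Kummer-genericity input by the direct type-determination statement (Fact~\ref{F:FusionUseful}(2) and its analogues), correctly identifying that the absence of any root-choosing phenomenon is what makes these cases easier. The one point you might sharpen is the passage ensuring that the generic point of the locus over a new parameter $\b'$ still lands in $\Cmu$: in the bad-field proof this is handled indirectly via the rank conditions $(**)$ and $(***)$ (as the paper remarks just after Theorem~\ref{T:Bad_DMP}), and the analogous step here relies on $\Cmu$ being an elementary class together with definability of rank in the base theories, rather than on a separate definability lemma.
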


\end{document}